\newcommandx{\note}[2][1=]{\todo[linecolor=Plum,backgroundcolor=Plum!25,bordercolor=Plum,#1]{#2}}
\tikzset{->-/.style={decoration={
markings,
mark=at position #1 with {\arrow{>}}},postaction={decorate}}}
\newtheoremstyle{example}{\topsep}{\topsep}%
     {}
     {}
     {\bfseries}
     {.}
     {2pt}
     {\thmname{#1}\thmnumber{ #2}\thmnote{ #3}}
\theoremstyle{example}
\newtheorem{exa}[equation]{Example}
\newtheorem{exas}[equation]{Examples}
\newtheorem{ex}[equation]{Example}
\newtheorem{rem}[equation]{Remark}
\newtheorem{rems}[equation]{Remarks}
\newtheorem{defi}[equation]{Definition}
\newtheorem{thm}[equation]{Theorem}
 \newtheorem{cor}[equation]{Corollary}
\newtheorem{lem}[equation]{Lemma}
\newtheorem{prop}[equation]{Proposition}
\setlist[enumerate,1]{label=(\arabic{*})}
\setlist[enumerate,2]{label=(\roman{*})}
\setlist[enumerate,3]{label=(\alph{*})}
\def\Ac{\mathcal{A}}
\def\Ec{\mathcal{E}}
\def\Fc{\mathcal{F}}
\def\Mc{\mathcal{M}}
\def\Oc{\mathcal{O}}
\newcommand{\arr}[1]{ {\{ #1 \} }}
\def\<<{\langle {}\hskip -.1cm {}\langle}
\def\>>{\rangle \hskip -.1cm \rangle}
\def\={{\, \simeq\, }}
\def\-{{\,\setminus\,}}
 \def\2{{\mathbf{2}}}
\def\A{ {\EuScript A}}
\def\Ac{\mathcal {A}}
\def\Att{{\tt{A}}}
\def\B{ {\EuScript B}}
\def\be{\begin{equation}}
\def\C{{\EuScript C}}
\def\CC{\mathbb{C}}
\def\Cat{{\mathcal Cat}}
\def\Cat{{\EuScript Cat}}
\def\CCat{{\mathbb{Cat}}}
 \def\Cof{{\on{Cof}}}
 \def\Coh{{\on{Coh}}}
\def\Cone{\operatorname{Cone}}
\def\Cot{{\on{Cot}}}
\def\Di{{\EuScript D}}
\def\D{\Di}
\def\DD{{\mathbb{D}}}
\def\del{{\partial}}
\def\dep{{\on{dep}}}
\def\Dtt{{\tt{D}}}
 \def\DK{{\on{DK}}}
\def\Ec{\mathcal{E}}
\def\ee{\end{equation}}
\def\EE{{\mathbb{E}}}
\def\eps{{\varepsilon}}
\def\ev{\on{ev}}
\def\Ex{\on{Ex}}
\def\ExFun{{\on{ExFun}}}
\def\Fc{\mathcal{F}}
\def\Fib{ \operatorname{Fib}}
\def\fib{\operatorname{fib}}
\def\Fun{\operatorname{Fun}}
\def\h{\operatorname{h}\!}
\def\Hom{{\operatorname{Hom}}}
\def\hra{\hookrightarrow}
\def\id{\on{id}}
\def\Id{\on{Id}}
\def\k{{\mathbf k}}
\def\KK{{\mathbb K}}
\def\lan{{\,\langle }}
\def\laxlim{{\on{lax-}\varprojlim}}
\def\laxcolim{{\on{lax-}\varinjlim}}
\def\lra{\longrightarrow}
\def\LMat{{\on{LMat}}}
\def\lperp{{^\perp\!}}
\def\lst{{ ^* \!}}
\def\M{{\EuScript M}}
\def\Map{\operatorname{Map}}
\def\Mat{\operatorname{Mat}}
\def\Mc{\mathcal{M}}
\def\N{\operatorname{N}}
\def\Ob{ \operatorname{Ob}}
\def\Oc{{\mathcal O}}
\def\ol{\overline}
\def\on{\operatorname}
\def\oo{{\infty}}
\def\op{{\operatorname{op}}}
\def\oplaxlim{{\on{oplax-}\varprojlim}}
\def\oplaxcolim{{\on{oplax-}\varinjlim}}
\def\phi{{\varphi}}
\def\PP{{\mathbb{P}}}
\def\ran{{\rangle}}
\def\RR{\mathbb{R}}
\def\Set{ {\operatorname{Set}}}
\def\Sp{{\on{Sp}}}
\def\Spec{{\on{Spec}}}
\def\SSph{{\mathbb {Sph}}}
\def\St{{\EuScript {S}t}}
\def\SSt{{\mathbb{St}}}
\def\Tot{\on{Tot}}
\def\ul{\underline}
\def\Vect{ {\on{Vect}}}
\def\wt{\widetilde}
\def\X{{\EuScript X}}
\def\Y{{\EuScript Y}}
\def\ZZ{\mathbb{Z}}
\DeclareMathSymbol\DDelta\mathord{bbold}{"01}
\DeclareMathSymbol\GGamma\mathord{bbold}{"00}
\DeclareMathSymbol\SSigma\mathord{bbold}{'117}
\DeclareMathSymbol\LLambda\mathord{bbold}{'003}
\newcommand\laxcone[7][]{
  \def\temptwocell{#1}
  \def\tempzero{#2}
  \def\tempone{#3}
  \def\temptwo{#4}
  \def\tempzeroone{#5}
  \def\temponetwo{#6}
  \def\tempzerotwo{#7}
  \begin{tikzcd}[column sep=small,ampersand replacement=\&]
    \& \ar[dr,"\temponetwo"{name=U}]\ar[dl,"\tempzeroone"'] \tempone   \& \\
    \tempzero \ar[rr, swap, "\tempzerotwo"]\ar[to=U,"\temptwocell", shorten <=20pt, shorten >=10pt, Rightarrow] \& \& \temptwo 
  \end{tikzcd}
}
\newcommand\laxcocone[7][]{
  \def\temptwocell{#1}
  \def\tempzero{#2}
  \def\tempone{#3}
  \def\temptwo{#4}
  \def\tempzeroone{#5}
  \def\temponetwo{#6}
  \def\tempzerotwo{#7}
	\begin{tikzcd}[column sep=small,ampersand replacement =\&]
    \& \tempone   \&    \\
    \tempzero \ar[ur, "\tempzeroone"{name=U}]\ar[rr, swap, "\tempzerotwo"]
    \& \& \ar[ul,"\temponetwo"'] \temptwo
    \ar[to=U,"\temptwocell",shorten <=20pt, shorten >=10pt, Rightarrow]
	\end{tikzcd}
}
\newcommand\oplaxcone[7][]{
  \def\temptwocell{#1}
  \def\tempzero{#2}
  \def\tempone{#3}
  \def\temptwo{#4}
  \def\tempzeroone{#5}
  \def\temponetwo{#6}
  \def\tempzerotwo{#7}
  \begin{tikzcd}[column sep=small, ampersand replacement=\&]
		\& \ar[dr,"\temponetwo"{name=U}]\ar[dl,"\tempzeroone"'] \tempone   \& \\
    \tempzero \ar[rr, swap, "\tempzerotwo"]
    \ar[to=U,"\temptwocell",shorten <=20pt, shorten >=10pt, Leftarrow] \& \& \temptwo
	\end{tikzcd}
}
\newcommand\oplaxcocone[7][]{
  \def\temptwocell{#1}
  \def\tempzero{#2}
  \def\tempone{#3}
  \def\temptwo{#4}
  \def\tempzeroone{#5}
  \def\temponetwo{#6}
  \def\tempzerotwo{#7}
  \begin{tikzcd}[column sep=small, ampersand replacement=\&]
		\& \tempone   \& \\
    \tempzero \ar[rr, swap, "\tempzerotwo"]\ar[ur,"\temponetwo"{name=U}]
     \& \& \temptwo \ar[ul,"\tempzeroone"'] \ar[to=U,"\temptwocell",shorten <=20pt, shorten >=10pt, Leftarrow]
	\end{tikzcd}
}
\title{N-spherical functors and categorification of Euler's continuants}
\author{Tobias Dyckerhoff, Mikhail Kapranov, Vadim Schechtman }
\begin{document}

\maketitle

\begin{abstract}
Euler's continuants are universal polynomials expressing  the numerator and denominator
of a finite continued fraction whose entries are independent variables. We introduce their
categorical lifts which are natural complexes (more precisely, coherently commutative cubes)
of functors involving compositions of a given functor and its adjoints of various orders, with
the differentials built out of units and counits of the adjunctions. In the stable $\oo$-categorical
context these complexes/cubes can be assigned totalizations which are new functors
serving as higher analogs of the spherical twist and cotwist. We define $N$-spherical functors
by vanishing of the twist and cotwist of order $N-1$ in which case those of order $N-2$
are equivalences. The usual concept of a spherical functor corresponds to $N=4$. We characterize
$N$-periodic semi-orthogonal decompositions of triangulated (stable $\oo$-)
categories in terms of $N$-sphericity of their gluing functors. The procedure of forming iterated
orthogonals turns out to be analogous to the
procedure of forming a continued fraction. 
\end{abstract}

\tableofcontents

\addcontentsline{toc}{section}{Introduction}


\numberwithin{equation}{section}

\section*{Introduction}

The goal of this paper is to connect the classical formalism of continued fractions with
the modern theories of spherical functors and semi-orthogonal decompositions
for triangulated and stable $\oo$-categories. 

\paragraph{Classical continuants.} Euler's continuants are universal polynomials 
$E_N(x_1,.\cdots, x_N)$ expressing the numerator and denominator
of a finite continued fraction whose entries are independent variables \cite[\S 3-4]{perron}
\cite[\S 1.2]{khrushchev}. For example (we use a signed version)
\be\label{eq:E2&E3}
E_2(x_1, x_2) = x_1 x_2-1, \quad E_3(x_1, x_2, x_3) = x_1x_2x_3-x_1-x_3
\ee
 so that
\be\label{eq:CF-3-term}
  x_1 - \cfrac{1}{x_2  -\cfrac{1} {x_3}} \,=\, \frac{ x_1x_2x_3 - x_1-x_3} {x_2x_3-1} \, = \,
  \frac{E_3(x_1, x_2, x_3)}{E_2(x_2, x_3)}. 
  \ee

They were first written down  (in a version without signs) in a 1764 paper \cite{euler} by Euler and have been studied
classically as a chapter of the theory of determinants,   entire books having been devoted to them
   \cite{borini}. 
More recently, 
continuants attracted considerable  attention in the theory of moduli spaces
of irregular connections   \cite{boalch, fairon},
  where they appear as certain group-valued moment maps.  

\paragraph{Categorical lift: continuant complexes of  iterated adjoints.  } 

In this paper we present another unexpected appearance of continuants: in modern category theory. 
More precisely, we work in the context of stable $\oo$-categories of Lurie  \cite{lurie:ha},
which provide a robust ``enhancement" of the classical concept of triangulated categories. 
But the underlying idea  can be explained even at the level of ordinary (additive) categories. 
Recall that a pair of functors   $
  \xymatrix{
  \A_0 \ar@<.4ex>[r]^F&\A_1\ar@<.4ex>[l]^G
  }
$
is called an adjoint pair (and we write $G=F^*$), if we are given  unit and counit transformations
\[
 e: \Id_{\A_0} \lra GF,\quad c: FG \lra \Id_{\A_1},
\]
satisfying the standard properties \cite{maclane}. If we now have a {\em string of $N$ adjoints}, i.e.,
a sequence of functors $F_1: \A_0\to \A_1, F_2: \A_1\to \A_0, \cdots, F_N$ in alternating directions
such that $F_{i+1}= F_i^*$, we can  form a diagram of functors  $\Ec_N(F_1,  \cdots, F_N)$ 
starting with the composition  $F_1\cdots F_N$ and successively applying the various counits. Dually, we have
a diagram $\Ec^N(F_N,\cdots, F_1)$ formed by applying units, terminating in $F_N\cdots F_1$.
 For $\A_0, \A_1$ additive we can totalize these diagrams into complexes of functors by passing to 
appropriate alternating sums. They can be seen as categorical liftings of the continuant $E_N$ as
their terms match the monomials in $E_N$. 
For example, $\Ec_2(F, F^*)$ is just the counit arrow,  $\Ec^2(F^*, F)$ is the unit arrow while
\[
\Ec_3(F, F^*, F^{**}) = \bigl\{ F F^* F^{**}  \lra  F\oplus F^{**}\bigr\}, \quad 
\Ec^3(F^{**}, F^*, F) = \bigl\{ F^{**} \oplus F \lra F^{**} F^* F\bigr\},
\]
in complete match with \eqref{eq:E2&E3}. See  further Examples \ref{exas:fibcubes}.

\paragraph{ Higher (co)twists and $N$-spherical functors.}  The continuant diagrams
  lead to a new point of view on and a ``higher'' generalization of the concept of spherical functors
  so important in categorical mirror symmetry and categorified representation theory.

In the main body of the paper
$F = F_1: \A_0\to\A_1$ is an exact functor of stable $\oo$-categories. Assuming the iterated adjonts
$F_2,\cdots, F_N$ exist, we denote by $\Ec_N(F)$, $\Ec^N(F)$ the corresponding
continuant diagrams. By inserting some $0$'s, they can be seen as homotopy commutative $(N-1)$-cubes of functors,
see \S \ref{subsec:fibcubes},  \ref{subsec:cont-cubes}, and so we can form their {\em totalizations},
see \S  \ref{subsec:gen-oo-cat},
which are single functors $\EE_N(F) = \Tot(\Ec_N(F))$, $\EE^N(F)=\Tot(\Ec^N(F))$. 
For example,
\[
\EE_2(F) = \Cone \bigl\{ c: F F^* \to \Id\bigr\}[-1], \quad \EE^2(F) = \Cone \bigl\{e: \Id\to F^* F\bigr\}
\]
are the  spherical twist and cotwist functors associated with $F$, and $F$ is called
spherical, if they are equivalences \cite{AL17, dkss-spher}. 

For any $N$ we call $\EE_N(F)$ and $\EE^N(F)$ the {\em $N$th twist and cotwist functors associated to $F$}
and say that $F$ is {\em $N$-spherical}, if $\EE_{N-1}(F) \= \EE^{N-1}(F)=0$. In this case
$\EE_{N-2}(F)$ and $E^{N-2}(F)$ are equivalences. 

Usual spherical functors are, in this
terminology, $4$-spherical. The characterization of usual spherical functors by vanishing of $\EE_3$ and
$\EE^3$ has been given by Kuznetsov \cite{kuznetsov} in 2015 without, however, the connection
with continuants having been recognized. 

In \S \ref{sec:ex} we provide several examples of $N$-spherical functors for arbitrary $N$.

\paragraph{$N$-spherical functors and $N$-periodic semi-orthogonal decompositions.} 
 An exact $\oo$-functor $F: \A_0\to\A_1$  as above can be used as a {\em gluing functor}, see 
 \cite{kuznetsov-lunts, dkss-spher, CDW} to construct a new stable $\oo$-category $\A=\A_0\oplus_F \A_1$
 in which $\A$ and $\B$ are embedded as full $\oo$-subcategories forming a  ($2$-term)
 {\em semi-orthogonal decomposition (SOD)} $(\A_0, \A_1)$ of  $\A$. This means that 
 $\Map_\A(a_1, a_0)$ is contractible for any objects $a_1\in\A_1$, $a_0\in A_0$ and each object $a$ of $\A$ 
  fits into a triangle
 \[
 a_1\lra a \lra a_0, \quad a_1\in\A_1, \, a_0\in \A_0.
 \]
The concept of SOD, going back to \cite{BK:SOD} for classical triangulated categories, has been 
re-cast in the stable $\oo$-categorical framework in \cite{dkss-spher}; the construction
of $\A_0\oplus_F \A_1$ was further interpreted
in \cite{CDW} as giving  a kind of lax additivity property of   the $(\oo,2)$-category of all stable $\oo$-categories. 

For an SOD $(\A_0, \A_1)$ of $\A$ we have that  $\A_1 = {^\perp \A}_0$ is the {\em left orthogonal} of $\A_0$, i.e.,
the full subcategory formed by objects $a_1$ such that for any $a_0\in\A_0$ the space $\Map_\A (a_1,a_0)$
is contractible. One can form further iterated orthogonals ${^{\perp\perp}} \A_0 = ^\perp ({^\perp} \A_0)$, 
${^{\perp\perp\perp}\A}_0$ etc.
and ask when the chain of  orthogonals in $N$-periodic, i.e., when the $N$-fold iterated
left orthogonal $^{\perp N} \A_0$ coincides with $\A_0$. One of our main results,
Theorem \ref{thm:N-spher=per} says that this happens if and
only if $F$ is $N$-spherical. For $N=4$ it is a result of Halpern-Leistner and Shipman
\cite {halpern-shipman}: a functor is spherical in the usual sense if and only if the chain of orthogonals
in $\A_0\oplus_F \A_1$ is $4$-periodic. This explains our terminology. 

\paragraph{Categorified continuants via lax $2\times 2$ matrices.} 
Passing to orthogonals corresponds to {\em mutations } of semi-orthogonal decompositions, see 
\cite{BK:SOD, dkss-spher}.
Our results suggest the following slogan:
\[
\begin{gathered}
\text {\em The procedure of forming iterated mutations of 2-term semi-orthogonal decompositions}
\\
 \text{\em  is
a categorical counterpart of the procedure of forming  continued fractions.
}
\end{gathered} 
\]
This can be explained as follows. 
It is classical, see, e.g.,  \cite[\S 5]{perron},  that continuants and continued fractions
 (with sign conventions as in \eqref{eq:CF-3-term})
are related to products of $2\times 2$ matrices of the form
\be\label{eq:CF-matrix}
\begin{pmatrix}
0&-1
\\
1& x_i
\end{pmatrix}. 
\ee
Now, an SOD $(\A_0, \A_1)$ of a stable $\oo$-category $\A= \A_0\oplus_F \A_1$ 
 can be seen as a kind of direct
sum decomposition of $\A$. In \cite{CDW} a theory of {\em lax matrices} was developed which allows one to describe
 an exact $\oo$-functor 
 $\phi: \A_0\oplus_F \A_1 \to \B_0\oplus_G \B_1$ in terms of a $2\times 2$ matrix $\|\phi_{ij}\|$  consisting  of functors
 $\phi_{ij}: \A_i\to \B_j$ together with additional data of certain natural transformations, see \S \ref  {subsec:laxmat}
 below. These data further  allow us to
 multiply such matrices by lifting the classical formula for each matrix element of the product  as the sum of two summands
 to the cone (cofiber) of a natural morphism between such summands, see Proposition \ref{prop:comp-2-laxmat}.
 If the gluing functor $F: \A_0\to \A_1$ has a right adjoint $F^*$, then we have
the  mutated SOD  $(\A_1, \A_2 = {^{\perp\perp} \A}_0)$ in which $\A_2$ is identified with $\A_0$ by the
mutation isomorphism; after this identification the gluing functor for the new SOD is $F^*$, so we have
an equivalence $\A_0\oplus_F \A_1 \to \A_1\oplus_{F^*} \A_0$ via the identification of both with $\A$. 
The   lax matrix of this equivalence is:
\be\label{eq:CF-lax-matrix}
\begin{pmatrix}
0& \Id_{\A_0} 
\\
\Id_{\A_1} & F^*
\end{pmatrix},
\ee
the minus sign in \eqref{eq:CF-matrix} being accounted for by the shift by $1$ involved  in the definition of lax matrices. 
Here we   omit the natural transformation data, see Proposition \ref{prop:laxmat-mu} for more details. 
Multiplying several  such matrices (for iterated mutations) gives continuant complexes, thus 
 justifying the slogan above. 
 
 Remarkably, several other features of continued fractions extend to this categorical context. Thus, the classical
 formula  \eqref{eq:best-approx} for the difference between successive convergents (often expressed by saying that
 ``continued fractions give best approximations'') upgrades,  after being put in a denominator-free form,  to a distinguished
 triangle of functors, see Proposition \ref{prop:det-triang}.

 \paragraph{Relation to other work.} Our categorification of continuants suggests new
 possibilities in several areas of recent interest. 
 
 First, our constructions can be seen as providing a categorical
 lift of some aspects of the study in  \cite{boalch}  of irregular local systems on the complex line
 $\CC$ near $\oo$. They can therefore be seen as describing some 
 ``irregular perverse schobers'' on 
 $\CC$  generalizing the point of view of \cite {kapranov-schechtman:schobers},
 see \cite{kuwagaki}. This suggests that 
  Poisson aspects of continuants
   \cite{boalch, fairon} can also have  a categorical upgrade.
   
   Second, continuants appear in the recent work \cite{etingof} on the analytic Langlands 
   correspondence
   for $PGL_2$ over the real and complex fields.  Our categorical lift makes it plausible
   that some aspects of this analytic approach can be categorified as well.
   
   Finally, various features of continued fractions are a familiar presence in the cluster
  algebra  formalism, continuants being cluster variables in some important examples,
   e.g., \cite[Ex. 5.3.10]{FWZ}.
  More conceptually,  moduli spaces of Stokes data possess cluster structures.
  By now there have been already two levels of categorifying the cluster formalism:
  additive  \cite{keller, reiten} and monoidal  \cite{hernandez}. Our
  approach may provide yet another level.
  
  These and other connections are discussed in more detail in \S  \ref{sec:fur-dir}.

 \paragraph{Organization of the paper. }

 In Section \ref{sec:decat} we present a ``decategorified'' version of our constructions. 
 
 More precisely, 
 in \S \ref{subsec:bil-spaces} we consider, instead of categories, 
 finite-dimensional vector spaces over some field $\k$ with a non-degenerate bilinear form $\beta$,
 not necessarily symmetric, 
 which imitates the properties of Hom-spaces, cf. \cite{bondal-symplectic}. In this case we have the formalism of left and right
 orthogonals and one can illustrate the categorical constructions that follow later by easier linear algebra constructions.
 Thus, in \S \ref{subsec:semi-orth-decat} we describe the concepts of semi-orthogonal decompositions
 and {\em gluing operators} (instead of gluing functors) in this linear-algebraic context. 
 In \S 3 we discuss our alternating sign version $E_N(x_1, \cdots, x_N)$ of Euler continuants 
 (in the main body of the paper we call them {\em Euler polynomials}
 to avoid confusion with the  traditional terminology where the plus signs are used). 
 The variables $x_i$ do not have to be commutative
 (this was already noted in \cite{fairon}), and we need this generality to apply the $E_N$ to operators.  
 We discuss the (classical) relations of the $E_N$ with continued fractions and, in \S \ref{subsec:Fib-Cheb-poly}, with
 Fibonacci   and
 Chebyshev polynomials. Then in \S \ref{subsec:peri-orth-decat} we describe the condition of $N$-periodicity
 of linear algebra orthogonals in terms of the vanising of $E_N$ applying to the gluing operator and its adjoints. 
 
 \vskip .2cm
 
 Section \ref{sec:stable-oo} collects background material on stable $\oo$-categories, some of it very standard and some more recent. 
 
 Thus, \S \ref{subsec:gen-oo-cat} recalls   mostly standard conventions on (stable) $\oo$-categories, following \cite{lurie:htt, lurie:ha}
 and fixes the notation to be used. In particular, we fix the notation for totalizations of coherent cubes in stable $\oo$-categories,
 to be used later. 
 In \S \ref{subsec:SOD} we recall the central for us concept of semi-orthogonal decompositions (SODs)
 of stable $\oo$-categories, following  \cite{dkss-spher}. The next \S \ref{subsec:oplax} summarizes, in the form convenient for our use,
 some of the recent material from \cite{CDW}:  the gluing procedure for constructing SODs  $\A\oplus_F\B$   is interpreted as a form of (op)lax (co)limit,
and  all the four  possibilities give the same answer  in this case.  In \S \ref{subsec:laxmat} we further recall, restricting to the
context of glued SODs only,  the concept of
lax matrices from \cite{CDW} and explain how such matrices can be composed. 

 \vskip .2cm
 
 In Section \ref{sec:cont-compl} we introduce and study our main construction: the continuant complexes of adjoints. 

 These complexes are in fact (homotopy coherent)  diagrams  labelled by posets  known as {\em Fibonacci cubes},
 the concept originally introduced in computer science \cite{hsu}. In \S
 \ref {subsec:fibcubes} we explain how such diagrams can be completed by zeroes to ordinary cubes and so allow totalization. 
 In \S \ref{subsec:cont-cubes} we introduce Fibonacci cubes of functors and their iterated adjoints. By totalizing these
 cubes, we form new functors called {\em higher twists and cotwists} . They are the categorical analogs of continuants
 and  are studied in \S \ref{subsec:hi-twist}. 
 
  \vskip .2cm
  
  Section \ref{sec:N-spher} is dedicated to the concept of $N$-spherical functors, which are higher generalizations of the usual spherical functors, the 
  latter correponding to $N=4$. 
  
  In \S \ref {subsec:N-spher} we give the definitions and list the basic properties of $N$-spherical functors, with some proofs postponed 
  until the next \S \ref {subsec:N-spher-peri}, where we prove our main result, Theorem \ref{thm:N-spher=per}. This theorem characterizes
  $N$-spherical functors in terms of $N$-periodicity of the associated glued SODs. This characterization allows us to extablish many
  properties of $N$-spherical functors which otherwise are not so easy to prove. 
  
    \vskip .2cm
    
    In Section \ref{sec:ex} we give several examples of $N$-periodic SODs and $N$-spherical functors.

 In \S\S $\, $ \ref {subsec:quivera} and  \ref {subsec:quiverd} we discuss the categories of representations of quivers of Dynkin types $A$ and $D$
 in an arbitrary stable $\oo$-category $\DD$. 
  In \S \ref  {subsec:fra-CY} we recall the concepts of a Serre functor and of  a  fractional Calabi-Yau category \cite{kuznetsov}.
  In such a category any SOD is periodic. 
  Then in \S \ref{subsec:N-spher-obj} we study the higher $N$ analog of the concept of  spherical objects, meaning objects $E$
  such that the functor $-\otimes E$  from the derived category of vector spaces is $N$-spherical. We give examples
for $N=6$   related to Enriques surfaces and their generalizations.
  
      \vskip .2cm
      
      In the final Section \ref{sec:fur-dir}  we discuss the relation of our results with other work and the outlook for future developments. 
 
In  \S \ref{subsec:inter-cat-stokes} we present the analogy between $N$-periodic SODs and a particular type of Stokes data for
irregular connections on the complex line. In particular, we see how our categorical constructions match the appearance
of continuants in the study of such data \cite{boalch, fairon}. In \S  \ref {subsec:more-gen-lax} we discuss the natural
$(\oo, 2)$-categorical framework for our constructions. Finally, in \S \ref{subsec:cluster} we discuss possible relations of
our constructions with the (categorified) cluster formalism. 

\paragraph{Acknowledgements.} 
We are grateful to T. Kuwagaki and  A. Kuznetsov for valuable discussions and comments on this work during its various stages.  We would like to thank K. Coulembier and P. Etingof for informing
us about their work in progress \cite{coulembier} on a related subject
and  sharing their preliminary notes. 
We would also  like to thank  S. Fomin for helpful pointers to the cluster algebra literature.

The work of M.K.  was supported by the World Premier International Research Center Initiative (WPI
Initiative), MEXT, Japan and  by the JSPS KAKENHI grant 20H01794. The work of V. S.   was partly
supported by the World Premier International Research Center Initiative (WPI Initiative), MEXT,
Japan. T.D. acknowledges support by the Deutsche Forschungsgemeinschaft under Germany’s Excellence
Strategy – EXC 2121 “Quantum Universe” – 390833306.


\numberwithin{equation}{subsection}

\section{Decategorified picture: Euler polynomials}\label{sec:decat}

\subsection {Bilinear spaces.} \label{subsec:bil-spaces}

Let $\k$ be a field. By a {\em bilinear space}
we mean a finite-dimensional $\k$-vector space $C$ together with a 
non-degenerate, not necessarily symmetric bilinear form 
$\beta=\beta_C: C\times C\to\k$. An {\em isometry} of bilinear spaces is
a linear isomorphism preserving the bilinear forms. Given a bilinear space
$(C,\beta)$, we can view $\beta$ as an isomorphism $C\to C^*$ sending
$x\mapsto \beta(x,-)$.  We then have
the isomorphism $S=S_C=(\beta^t)^{-1}\circ\beta: C\to C$ so that
$\beta(x,y)=\beta(y, S(x))$ for any $x,y\in C$. The isomorphism $C$ is easily
seen to be an isometry and will be called the {\em Serre isometry} for $C$. 

\vskip .2cm

Let $(A,\beta_A)$ and $(B, \beta_B)$ be two bilinear spaces and $f: A\to B$
a linear operator. The right  and left adjoints $f^*, \lst f: B\to A$ are defined
by the conditions
\[
\beta_B(f(a),b) = \beta_A(a, f^*(b)), \quad \beta_A(^*f(b),a) = \beta_B(b,f(a)),
\quad \forall \, a\in A. b\in B. 
\]
They are connected by $f^* = S_A\circ \lst f \circ S_B^{-1}$. 
Note that $\lst(f^*) = (\lst f)^*=f$. 
By taking adjoints of adjoints, we get the chain of iterated adjoints
\[
\cdots, \lst\lst f = f^{(-2)}, \lst f=f^{(-1)}, f=f^{(0)}, f^*=f^{(1)}, f^{**}=f^{(2)}, \cdots
\]
 If $f$ is an isometry, then $f^*= \lst f = f^{-1}$. Conversely, if $f$ is isomorphism
 and either $f^*$ or $\lst f$ coincides with $f^{-1}$, then $f$ is an isometry.  
 Moreover, if $\dim A=\dim B$ and either of the four compositions $f(f^*)$,
 $(f^*)f$, $f(\lst f)$, $(\lst f)f$ is equal to the identity, then $f$ is an isometry. 
 
 \vskip .2cm

Let $(C,\beta)$ be a bilinear space. For a linear subspace $A\subset C$ its right and
left orthogonals are defined by
\[
A^\perp \,=\,\{c\in C| \,\beta(a,c)=0,\,\,\forall, a\in A\}, \quad
\lperp A\,=\,
\{c\in C| \,\beta(c,a)=0,\,\,\forall, a\in A\}.
\]
Note that $^\perp(A^\perp) = (^\perp A)^\perp = A$. 
By taking orthogonals of orthogonals, we get the chain of iterated orthogonals
\be
\cdots,  \lperp A = A^{\perp(-1)}, A=A^{\perp(0)}, A^\perp = A^{\perp (1)}, 
\A^{\perp\perp}=A^{\perp(2)}, \cdots
\ee

 A linear subspace $A\subset C$ is called
{\em non-degenerate}, if $\beta|_A$ is non-degenerate. 

\subsection{Semi-orthogonal decompositions and gluing.} \label{subsec:semi-orth-decat}

Let $C$ be a
bilinear space. We say that a pair $(A,B)$ of linear subspaces of $C$ forms
a {\em semi-orthogonal decomposition (SOD)} of $C$, if $C=A\oplus B$ and
$\beta(b,a)=0$ for any $b\in B, a\in A$. 

\begin{prop}\label{prop:SOD=nondeg}
 (1) If $(A,B)$ is an SOD for $C$, then $B=\lperp A$, $A=B^\perp$
and both $A,B$ are non-degenerate. 

\vskip .2cm

(2) For a subspace $A\subset C$ the following are equivalent:
\vskip .2cm

\noindent (2i)  $(A, \lperp A)$ is an SOD for $C$.

\noindent (2ii)  $A$ is non-degenerate.

\noindent (2iii)  $\lperp A$ is non-degenerate. \qed
\end{prop}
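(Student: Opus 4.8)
The plan is to reduce everything to two standard facts about a non-degenerate bilinear form on a finite-dimensional space, while keeping careful track of the asymmetry of $\beta$. First, for any subspace $V\subseteq C$ one has $\dim V^\perp=\dim\lperp V=\dim C-\dim V$: indeed $\lperp V$ is the kernel of the composite $C\to C^*\to V^*$ of the map $c\mapsto\beta(c,-)$ (an isomorphism by non-degeneracy) with the restriction $C^*\to V^*$, and symmetrically $V^\perp$ uses $c\mapsto\beta(-,c)$. Second, a bilinear form on a finite-dimensional space is non-degenerate as soon as one of its two kernels vanishes, since the left and right kernels both have dimension $\dim-\operatorname{rank}$ of the form. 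Combined with the already-recorded identity $\lperp(V^\perp)=(\lperp V)^\perp=V$, these give the criterion I will use repeatedly: $\beta|_V$ is non-degenerate precisely when $V\cap\lperp V=0$, equivalently when $V\cap V^\perp=0$, these two intersections being exactly the left and right kernels of $\beta|_V$.

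For part (1), the semi-orthogonality condition $\beta(b,a)=0$ for $b\in B$, $a\in A$ says precisely that $B\subseteq\lperp A$ and $A\subseteq B^\perp$. Since $C=A\oplus B$, the dimension fact yields $\dim B=\dim C-\dim A=\dim\lperp A$ and $\dim A=\dim C-\dim B=\dim B^\perp$, so both inclusions are equalities: $B=\lperp A$ and $A=B^\perp$. Then the left kernel of $\beta|_A$ is $A\cap\lperp A=A\cap B=0$, so $\beta|_A$ is non-degenerate; and the right kernel of $\beta|_B$ is $B\cap B^\perp=B\cap A=0$, so $\beta|_B$ is non-degenerate.

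For part (2), the implications $(2i)\Rightarrow(2ii)$ and $(2i)\Rightarrow(2iii)$ are immediate from part (1). For the converses I would observe that all three conditions are equivalent to the single statement $A\cap\lperp A=0$: condition $(2ii)$ is exactly the criterion above applied to $V=A$; condition $(2iii)$ is the criterion applied to $V=\lperp A$, giving $\lperp A\cap(\lperp A)^\perp=0$, which equals $A\cap\lperp A=0$ because $(\lperp A)^\perp=A$; and if $A\cap\lperp A=0$ then $\dim(A+\lperp A)=\dim A+\dim\lperp A=\dim A+(\dim C-\dim A)=\dim C$, so $C=A\oplus\lperp A$, and together with the tautological semi-orthogonality $\beta(\lperp A,A)=0$ this is precisely $(2i)$.

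There is no genuine obstacle here; the one point demanding care is the bookkeeping forced by the non-symmetry of $\beta$ — matching $\lperp(\cdot)$ and $(\cdot)^\perp$ to the correct slot of $\beta$ throughout, and passing from ``one of the two kernels of $\beta|_V$ vanishes'' to ``$\beta|_V$ is non-degenerate'' through the equality of the dimensions of the two kernels rather than reading it off directly.
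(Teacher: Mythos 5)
Your proof is correct, and the paper offers none of its own: Proposition \ref{prop:SOD=nondeg} is stated with an immediate \texttt{\textbackslash qed}, signalling that the authors regard it as routine linear algebra. Your argument is the natural way to fill that gap. The reduction of all three conditions in (2) to the single statement $A\cap\lperp A=0$, via the equality $\dim\lperp V=\dim V^\perp=\dim C-\dim V$ and the fact that the left and right kernels of $\beta|_V$ have the same dimension, is clean; the careful matching of $\lperp(\cdot)$ versus $(\cdot)^\perp$ to the left and right arguments of $\beta$ is exactly the bookkeeping one must do since $\beta$ is not assumed symmetric, and you have done it correctly (left kernel $=A\cap\lperp A$, right kernel $=A\cap A^\perp$, and $(\lperp A)^\perp=A$ to identify (2ii) and (2iii)). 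Nothing to fix.
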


If $(A,B)$ is an SOD of $C$, then $(A, \beta_A=\beta|_A)$ and
$(B,\beta_B=\beta|_B)$ are bilinear spaces. The remaining part of $\beta$
is encoded in the {\em gluing operator} $f: A\to B$ defined by the conditions
\[
\beta(a,b) \,=\,\beta_B(f(a),b), \quad a\in A, b\in B. 
\]
Conversely, given two bilinear spaces $(A, \beta_A)$,  $(B, \beta_B)$
and a linear operator $f: A\to B$, we can construct a bilinear space
$(C=C_f, \beta_C)$ as follows, As a vector space $C=A\oplus B$, and
$\beta=\beta_C$ is defined by
 \[
 \beta(a,a')=\beta_A(a,a'), \quad \beta(b,b')=\beta_B(b,b'), \quad \beta(a,b) = \beta_B(f(a),b), \quad \beta(b,a)=0
 \]
for any $a,a'\in A$, $b,b'\in B$. We refer to $C_f$ as the {\em (bilinear space with the)
SOD glued from $A,B$ via $f$}. 

\vskip .2cm

Let $(A, B=\lperp A)$ be an SOD of $C$. By Proposition 
\ref{prop:SOD=nondeg}, $(A^\perp, A)$ is another SOD. In particular, both $\lperp A$ and $A^\perp$ are direct complements to $A$ in $C$. Therefore we have the
isomorphism $M_A: A^\perp\to\lperp A$ given by the projection to $\lperp A$
with respect to the direct sum decomposition $C= A\oplus \lperp A$. 
The inverse isomorphism $M_A^{-1}: \lperp A\to A^\perp$ is given by the projection
to $A^\perp$ with respect to the direct sum decompositon $C=A^\perp \oplus A$.
Further, it is immediate that $M_A$ is in fact an isometry with respect to the
bilinear forms induced by $\beta_C$ on $A^\perp$ and $\lperp A$. We will
call $M_A$ the {\em mutation isometry around} $A$. 

\begin{prop}\label{prop:glue-for-perp}
Let $f: A\to \lperp A$ be the gluing operator for the SOD $(A, \lperp A)$.
Then the gluing operator for $(A^\perp, A)$ is $(-f^*)\circ M_A: A^\perp\to A$. 
\end{prop}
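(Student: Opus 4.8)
The plan is to compute the gluing operator for $(A^\perp, A)$ directly from its definition, using the mutation isometry $M_A$ to transport between the two complements of $A$. Recall that the gluing operator $g: A^\perp \to A$ for the SOD $(A^\perp, A)$ is characterized by $\beta(x, a) = \beta_A(g(x), a)$ for all $x \in A^\perp$, $a \in A$, where $\beta_A = \beta|_A$. So the task reduces to evaluating $\beta(x,a)$ for $x \in A^\perp$ and rewriting it in the form $\beta_A((-f^*)(M_A x), a)$.

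First I would use the direct sum decomposition $C = A \oplus {}\lperp A$ to write $x = M_A(x) + a_x$ where $M_A(x) \in {}\lperp A$ is the $\lperp A$-component and $a_x \in A$; this is precisely the definition of $M_A$ as the projection to $\lperp A$ along $A$. Then $\beta(x,a) = \beta(M_A x, a) + \beta(a_x, a) = \beta_A(a_x, a)$, since $\beta(b,a) = 0$ for $b \in {}\lperp A = B$ and $a \in A$ by the SOD condition, and since $\beta$ restricted to $A$ is $\beta_A$. Wait — this would say the gluing operator is $x \mapsto a_x$, the $A$-component of $x$ in $C = A \oplus \lperp A$; I should instead keep $\beta(x,a)$ intact and expand $a$-side arguments, since the point is to express things through $f$. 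The cleaner route: since $x \in A^\perp$, we have $\beta(a', x) = 0$ for all $a' \in A$, which by the defining property of the Serre isometry gives $\beta(x, S_A^{-1} a') $-type relations; rather than that, I would use that $M_A x \in {}\lperp A$ and compute $\beta(x, a)$ by first noting $x - M_A x \in A$, hence $\beta(x,a) = \beta(M_A x, a) + \beta_A(x - M_A x, a)$, and then separately relate $\beta(M_A x, a)$ to $f$ via the gluing data, $\beta(a, M_A x) = \beta_{\lperp A}(f(a), M_A x)$, together with the adjunction defining $f^*: \lperp A \to A$ and the vanishing $\beta(b,a)=0$.

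The key computational identity I expect to need is: for $a \in A$ and $b \in {}\lperp A$, the gluing relation $\beta(a, b) = \beta_{\lperp A}(f(a), b)$ combined with the definition of the right adjoint $f^*$ (taken with respect to $\beta_A$ on $A$ and $\beta_{\lperp A}$ on $\lperp A$), namely $\beta_{\lperp A}(f(a), b) = \beta_A(a, f^* b)$, yields $\beta(a, b) = \beta_A(a, f^*(b))$. Meanwhile $\beta(a, x) = 0$ for $x \in A^\perp$. Writing $x = M_A x + (x - M_A x)$ with $M_A x \in \lperp A$ and $x - M_A x \in A$, I get $0 = \beta(a, x) = \beta_A(a, f^*(M_A x)) + \beta_A(a, x - M_A x)$, so $\beta_A(a, (x - M_A x)) = -\beta_A(a, f^*(M_A x)) = \beta_A(a, (-f^*)(M_A x))$ for all $a \in A$. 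Since $\beta_A$ is non-degenerate this forces $x - M_A x = (-f^*)(M_A x)$ in $A$. Finally, for $y \in A^\perp$, $\beta(x, y) = \beta(M_A x + (x - M_A x), y)$; the term $\beta(M_A x, y)$ vanishes because $M_A x \in {}\lperp A = B$ and $y \in A^\perp \subset$ — no: $y$ is not in $A$. Instead I pair against $a \in A$: the gluing operator $g$ of $(A^\perp, A)$ satisfies $\beta_A(g(x), a) = \beta(x,a)$, and $\beta(x,a) = \beta(M_A x, a) + \beta_A(x - M_A x, a) = 0 + \beta_A((-f^*)(M_A x), a)$ using $\beta(b,a) = 0$. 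By non-degeneracy of $\beta_A$, $g(x) = (-f^*)(M_A x)$, i.e. $g = (-f^*) \circ M_A$, as claimed.

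The main obstacle I anticipate is bookkeeping the three distinct bilinear forms ($\beta$ on $C$, $\beta_A$ on $A$, $\beta_{\lperp A}$ on $\lperp A$) and making sure the adjoint $f^*$ is the one taken with respect to $\beta_A$ and $\beta_{\lperp A}$ (not some twisted version), so that the identity $\beta(a,b) = \beta_A(a, f^*(b))$ holds on the nose; once that is pinned down, everything is forced by non-degeneracy of $\beta_A$ and the two vanishing conditions $\beta(\lperp A, A) = 0$ and $\beta(A, A^\perp) = 0$. One should also double-check the orientation conventions: $f^*$ is the \emph{right} adjoint, matching the appearance of $f^*$ (rather than $\lst f$) in the statement, which is consistent with the one-sided vanishing $\beta(b,a)=0$ that singles out the left orthogonal.
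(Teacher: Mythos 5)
Your proof is correct and follows essentially the same route as the paper: decompose $x\in A^\perp$ as $x = M_A x + (x - M_A x)$ with $M_A x\in\lperp A$ and $x - M_A x\in A$, use $\beta(\lperp A, A)=0$ to see that the gluing operator sends $x$ to its $A$-component, and use $\beta(A, A^\perp)=0$ together with the adjunction $\beta_{\lperp A}(f(a),b)=\beta_A(a,f^*(b))$ to identify that component as $-f^*(M_A x)$.
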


\noindent {\sl Proof:} Denote the gluing operator for $(A^\perp, A)$ by
$g: A^\perp\to A$ so that $\beta(b', a) = \beta_A(g(b'), a)$ for any
$b'\in A^\perp, a\in A$. Given any $b'\in A^\perp$, we can write it uniquely as $b'=a_1+b_1$
with $a_1\in A$ and $b_1\in \lperp A$. Then $b_1=M_A(b')$. Since
$b_1\in A^\perp$, we have $\beta(b',a)=\beta_A(a_1,a)$ for any $a\in A$, so
$a_1=g(b')$ and therefore $g(b')$ is given by the projection of $b'$ to
$A$ in $A\oplus \lperp A$. It remains to identify $g$ as given by this projection. 
Taking into account the equality $b_1=M(b')$, we need to show that $a_1=-f^*(b_1)$,
i.e., that $-\beta_A(a,a_1) \buildrel ?\over = \beta_B(f(a), b_1)$\
for any $a\in A$. But the RHS of this putative equality is, by definition of $f$,
equal to $\beta(a,b_1)$. Since $b_1=b'=a_1$ and $\beta(a,b')=0$,
we get $\beta(a,b_1) = -\beta_A(a,a_1)$, so the equality marked ``?" is true. \qed

\begin{cor}\label{cor:coord-change}
The coordinate change transformation obtained as the composition of the
two isomorphisms
\[
A\oplus A^\perp \lra C \lra \lperp A \oplus A,
\]
has the matrix
\[
\begin{array}{c|c|c|}
& A& A^\perp
\\
\hline
\lperp A & 0 & M_A
\\
\hline
A& 1 & -f^*\circ M_A
\\
\hline
 \end{array}
\]
 \qed
\end{cor}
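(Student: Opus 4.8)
The plan is to simply assemble the matrix of the composite coordinate change from the two pieces already identified in the preceding results, checking each matrix entry against the definition of the two constituent isomorphisms. Recall that the first map $A \oplus A^\perp \to C$ is the direct sum decomposition isomorphism coming from the SOD $(A^\perp, A)$ (equivalently, from Proposition \ref{prop:SOD=nondeg}(2)), and the second map $C \to {\lperp A} \oplus A$ is the inverse of the direct sum decomposition isomorphism coming from the SOD $(A, \lperp A)$.

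The key steps, in order, would be: first, fix an element and track it. Take $a \in A$; under $A \oplus A^\perp \to C$ it goes to $a$, and then under $C \to {\lperp A} \oplus A$ (which splits $c$ as its $\lperp A$-component plus its $A$-component) it goes to $(0, a)$ since $a \in A$. This gives the first column $\binom{0}{1}$. Second, take $b' \in A^\perp$; under $A \oplus A^\perp \to C$ it maps to $b' \in C$. Now decompose $b' = a_1 + b_1$ with $a_1 \in A$, $b_1 \in \lperp A$: by the definition of the mutation isometry, $b_1 = M_A(b')$, which is the $\lperp A$-entry of the second column. For the $A$-entry we need $a_1$ expressed in terms of the data $f$ and $M_A$, and this is precisely the content of the proof of Proposition \ref{prop:glue-for-perp}, where it is shown that $a_1 = -f^*(b_1) = -f^*(M_A(b')) = (-f^* \circ M_A)(b')$. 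Hence the second column is $\binom{M_A}{-f^* \circ M_A}$, and the matrix is as claimed.

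There is essentially no obstacle here — the corollary is a bookkeeping consequence of Proposition \ref{prop:glue-for-perp} together with the definitions of $M_A$ and of the two SOD splitting isomorphisms. The only point requiring a modicum of care is orientation/convention: one must make sure that ``the composition of the two isomorphisms $A \oplus A^\perp \to C \to {\lperp A} \oplus A$'' is read with the second arrow being the inverse of the splitting $C \xrightarrow{\sim} \lperp A \oplus A$ attached to the SOD $(A, \lperp A)$, so that the matrix entries land in the right summands; with that convention fixed, the computation above is forced. I would therefore present the proof as: ``Immediate from Proposition \ref{prop:glue-for-perp} and the proof thereof, by computing the images of $a \in A$ and $b' \in A^\perp$,'' followed by the two-line element chase above.
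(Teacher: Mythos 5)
Your proof is correct and is exactly the element-chase the paper leaves implicit (the corollary is stated with an immediate \qed after Proposition \ref{prop:glue-for-perp}): the first column is trivial, and the second column is read off from the identities $b_1 = M_A(b')$ and $a_1 = -f^*(b_1)$ established in the proof of that proposition.
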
 

\subsection{Cotwinned subsets and Euler polynomials.} \label{subsec:euler-poly}

For integers $p\leq q$ we denote by $[p,q]=\{p, p+1,\cdots, q\}$ the  integer interval bounded by $p$ and $q$. A subset
$I\subset [p,q]$ will be called {\em cotwinned}, if its complement
$\ol I = [p,q]\- I$ is a (possibly empty) disjoint union of ``twins'', i.e., of 
$2$-element subsets of the form $\{i, i+1\}$. In particular, $|\ol I|$ is even.
We define the {\em depth} of $I$ as $\dep(I)=|\ol I|/2$. 
We denote $\Cot[p,q]$ the set of all cotwinned subsets of $[p,q]$.
The following is well known. 

\begin{prop}\label{prop:fibonacci}
(a) $|\Cot[1,N]|=\phi_N$ is the $N$th Fibonacci number defined by $\phi_1=1, \phi_2=2$,
$\phi_N=\phi_{N-1}+\phi_{N-2}$. 

\vskip .2cm

(b) The number of $I\in\Cot[1,N]$ with $\dep(I)=k$ is equal to $N-k\choose k$. 
\end{prop}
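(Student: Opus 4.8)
The plan is to prove (b) first by a direct bijective/counting argument, and then derive (a) as an immediate corollary by summing over $k$ and invoking the Vandermonde-type identity $\sum_k \binom{N-k}{k} = \phi_N$ (or, alternatively, by establishing a recursion directly). The key observation is that a cotwinned subset $I \subset [1,N]$ is completely determined by the placement of its complement $\ol I$, which by definition is a disjoint union of $k = \dep(I)$ ``twins'' $\{i,i+1\}$. So counting cotwinned subsets of depth $k$ is the same as counting the ways to choose $k$ pairwise-disjoint, non-adjacent dominoes $\{i,i+1\}$ inside the linear order $[1,N]$ — equivalently, placing $k$ non-overlapping dominoes on a $1 \times N$ strip (two dominoes $\{i,i+1\}$ and $\{i+1,i+2\}$ sharing the point $i+1$ are not disjoint, so ``disjoint'' already forces non-adjacency here; I should double-check whether the intended reading allows two twins like $\{i,i+1\}$ and $\{i+2,i+3\}$ to be ``adjacent but disjoint'' — it does, and that is fine).

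The counting step itself is the standard domino/tiling count: the number of ways to place $k$ disjoint dominoes on $[1,N]$, leaving $N-2k$ single cells uncovered, equals $\binom{N-k}{k}$. The cleanest argument is a ``stars and bars after contraction'' bijection: contract each domino to a single symbol, so we are linearly arranging $k$ domino-symbols and $N-2k$ singleton-symbols in a row — a total of $N-k$ symbols — and we choose which $k$ of the $N-k$ positions are dominoes, giving $\binom{N-k}{k}$. I would write this contraction map and its inverse (re-expanding each domino symbol) explicitly and note it is a bijection between $\{I \in \Cot[1,N] : \dep(I) = k\}$ and $k$-subsets of an $(N-k)$-element set. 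This proves (b).

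For (a), summing (b) over all valid $k$ (namely $0 \le k \le \lfloor N/2 \rfloor$) gives $|\Cot[1,N]| = \sum_{k \ge 0} \binom{N-k}{k}$, and this sum is well known to equal the Fibonacci number $\phi_N$ with the indexing $\phi_1 = 1$, $\phi_2 = 2$. I would verify the base cases $N=1$ ($\Cot[1,1] = \{\{1\}, \emptyset\}$? — no: $\ol I$ must be a union of twins, and a single element cannot form a twin, so $\ol I = \emptyset$ forced, giving $|\Cot[1,1]| = 1 = \phi_1$) and $N=2$ ($\ol I \in \{\emptyset, \{1,2\}\}$, so $|\Cot[1,2]| = 2 = \phi_2$), and then either cite the identity $\sum_k \binom{N-k}{k} = \phi_N$ or, more self-containedly, establish the recursion $|\Cot[1,N]| = |\Cot[1,N-1]| + |\Cot[1,N-2]|$ by conditioning on whether $N \in I$: if $N \in I$, deleting $N$ gives a cotwinned subset of $[1,N-1]$ and this is a bijection; if $N \notin I$, then $N$ lies in a twin, which must be $\{N-1,N\}$, so $N-1 \notin I$ too, and deleting both gives a cotwinned subset of $[1,N-2]$, again bijectively. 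This recursion plus base cases identifies $|\Cot[1,N]|$ with $\phi_N$.

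The only real subtlety — the ``main obstacle,'' though it is minor — is making sure the domino-placement count genuinely produces pairwise \emph{disjoint} twins and that no twin is double-counted or accidentally forced to be adjacent to another; the contraction bijection handles this automatically, since distinct symbols in a linear arrangement occupy disjoint cells upon re-expansion, and conversely any disjoint domino configuration contracts unambiguously. Everything else is routine; I would keep the write-up to the bijection for (b) and the two-case recursion for (a).
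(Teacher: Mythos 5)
Your proof is correct and, for part (a), takes exactly the paper's route: the recursion $c_N = c_{N-1} + c_{N-2}$ established by conditioning on whether $N \in I$, with the same two base cases. For part (b) the paper only says ``obtained by an elementary computation''; your domino-contraction bijection (replace each twin by a single symbol, leaving an arrangement of $k$ twin-symbols and $N-2k$ singleton-symbols among $N-k$ slots) is the standard way to make that computation explicit and is a fine supplement.
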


\noindent{\sl Proof:} (a) Denote temporarily $|\Cot[1,N]|=c_N$. Then $c_1=1$
(the only cotwinned subset in $\{1\}$ is  $\{1\}$) and  $c_2=2$ (the cotwinned
subsets in $[1,2]$ are $\emptyset$ and $[1,2]$). Let us prove that
$c_N=c_{N-1}+c_{N-2}$ for $N\geq 3$. A cotwinned subset $I\subset [1,N]$
either contains $N$, or it does not. If $I\ni N$, then $I=I'\cup\{N\}$ where
$I'\in\Cot[1,N-1]$ can be arbitrary. If $I\not\ni N$, then $I\not\ni N-1$ as well
(otherwise $\ol I$ contains $N$ but not $n-1$ so it cannot be a disjoint union of
twins). Therefore $I\in\Cot[1,N-2]$. This establishes a bijection
$\Cot[1,N]\to\Cot[1,N-1]\sqcup\Cot[1,N-2]$, thus proving (a).
Part (b) is obtained by an elementary computation.
\qed

\vskip .2cm

Let now $x_1,\cdots, x_N$ be noncommutative variables
 and $\k\lan x_1,\cdots, x_N\ran$ be the $\k$-algebra of noncommutative polynomials in them.
 Let also $q$ be a central variable so that $\k[q]\lan x_1,\cdots, x_N\ran$ is the algebra
 of noncommutative polynomials in the $x_i$ over the commutative ring $\k[q]$. 
For any subset $I=\{i_1<\cdots < i_p\} \subset [1,N]$ we denote by
\[
x_I \,=\, x_{i_1} \cdots x_{i_p} \,\in \, \k\lan x_1,\cdots, x_N\ran
\]
the noncommutative monomial associated to $I$ and its natural ordering. 
We  set $x_\emptyset=1$. 

\begin{defi}
For $N\geq 1$ the $N$th (noncommutative) {\em parametric Euler polynomial}  is 
\[
E_N(x_1,\cdots, x_N;  q) \,=\,\sum_{I\in\Cot[1,N]} q^{\dep(I)} x_I \,\in\, 
\k[q]\lan x_1,\cdots, x_N\ran \, .
\]
We also put $E_0=1\in\k$. We define the {\em  (alternating) Euler polynomial} as
\[
E_N(x_1,\cdots, x_N) \,=\, E_N(x_1,\cdots, x_N; -1)\, \in\,  \k\lan x_1,\cdots, x_N\ran. 
\]
\end{defi}
For example,
\[
\begin{gathered}
E_1(x)=x,
\\
E_2(x_1, x_2) = x_1 x_2 -1,
\\
E_3(x_1, x_2, x_3) = x_1x_2x_3 -x_1-x_3,
\\
E_4(x_1, x_2, x_3, x_4) = x_1x_2 x_3 x_4 - x_1x_2 - x_1x_4 - x_3x_4 + 1,
\end{gathered}
\]
and so on. Thus $E_N$ has $\phi_N$ monomials. The more standard
 {\em Euler's continuants} \cite{boalch, knuth}
are, in our notation, 
\[
(x_1,\cdots, x_N) \,=\, E_N(x_1,
\cdots, x_N; 1)
\]
The alternating version will be more important for us and will later categorify into complexes\footnote{This form of $E_N$ is  also more natural because for commuting $x_i$ it appears as the determinant
of a  {\em symmetric} $3$-diagonal matrix \cite[\S 4]{perron} while for the standard $(x_1,\cdots x_N)$
the matrix is not symmetric.
 }. 
We have the following analogs of the standard properties of continuants
  \cite{boalch, fairon, knuth, perron}.
    \begin{prop}\label{prop:euler-recurs}
  (a) We have
  \[
  \begin{gathered}
  E_N(x_1,\cdots, x_N;q) \,=\, x_1 E_{N-1}(x_2,\cdots, x_N;q) +q E_{N-2}(x_3, \cdots, x_N; q)
  \,=
  \\
  =\,  E_{N-1}(x_1,\cdots, x_{N-1}; q)x_N  +q E_{N-2}(x_1,\cdots, x_{N-2}; q). 
  \end{gathered}
  \]
  
   (b) Let $\KK$ be a
  skew field containing $\k[q]\lan x_1,\cdots, x_N\ran$. Consider the continued fraction
  \[
  R_N \,=\,  x_1 + \cfrac{q}{x_2 + \cfrac{q}{\ddots \, +\cfrac{q}{x_N}}} \,\,\in \,\, \KK. 
  \]
  Then
  \[
  \begin{gathered}
  R_N\,=\, P_N Q_N^{-1} \,=\, (Q'_N)^{-1} P'_N, \quad \text{where}
  \\
  P_N=  E_N(x_1,\cdots, x_N;q), \quad Q_N= E_{N-1}(x_2,\cdots, x_N; q), 
  \\
  P'_N=  E_N(x_N, \cdots, x_1; q), \quad Q'_N=E_{N-1}(x_N, \cdots, x_2; q). 
  \end{gathered}
  \]

  (c) Let $U_i=\begin{bmatrix} 0&1\\1& (-1)^i x_i\end{bmatrix}\in\Mat_2 \k\lan x_1,\cdots, x_N\ran$. Then
  \[
  U_1\cdots U_n \,=\, \begin{bmatrix}
  (-1)^{[{N-2\over 2}] }E_{N-2}(x_2, \cdots, x_{N-1}) &  (-1)^{[{N-1\over 2}]} 
  E_{N-1}(x_2,\cdots, x_N)  &
  \\
  (-1)^{[{N\over 2}]}E_{N-1}(x_1,\cdots, x_{N-1}) & 
   (-1)^{[{N+1\over 2}]} E_N(x_1,\cdots, x_N)
  \end{bmatrix}. 
  \]
  
  (d) We have the identity
  \[
  E_N(x_1, \cdots, x_N; q) E_N(x_{N+1}, x_N, \cdots, x_2; q) -
  E_{N+1}(x_1, \cdots, x_{N+1}; q) E_{N-1}(x_N, x_{N-1}, \cdots, x_2; q) = (-q)^N. 
  \]

 \end{prop}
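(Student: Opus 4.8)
The plan is to deduce identity (d) from the matrix formula in part (c), exactly as the classical "determinant identity" for continuants is deduced from the matrix product representation. First I would observe that it suffices to prove the parametric statement with general $q$, since setting $q=-1$ recovers the alternating version (and in fact the stated identity already carries $q$). The key algebraic input is that each $U_i$, or rather the $q$-deformed matrix $U_i(q)=\begin{bmatrix} 0 & q^{?} \\ 1 & x_i\end{bmatrix}$ appearing implicitly in (c), has determinant a power of $-q$ (for the version in (c) with entries $0,1,1,(-1)^i x_i$ one computes $\det U_i = -1$, and after restoring the $q$-weights one gets $\det = -q$). Hence the product $U_1\cdots U_N$ has determinant $(-q)^N$, and expanding this determinant through the explicit $2\times 2$ matrix of continuants in part (c) gives a relation between $E_{N-2}, E_{N-1}, E_N$ with various arguments. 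The second ingredient is the reversal symmetry: the entries of $U_1\cdots U_N$ involve $E_k(x_1,\dots,x_k)$ read left-to-right, while taking the transpose-type product $U_N\cdots U_1$ (or equivalently reading the same product from the other end) produces the reversed-argument continuants $E_k(x_{N+1},x_N,\dots)$; matching the two computations of the same determinant is what converts $\det(U_1\cdots U_N)=(-q)^N$ into precisely the asserted cross-term identity.

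Concretely, the steps I would carry out are: (i) record the $q$-weighted matrices $U_i(q)$ refining part (c), check $U_1(q)\cdots U_N(q)$ has the stated $2\times2$ form with $E_\bullet(\cdot\,;q)$ entries and signs, and check $\det U_i(q) = -q$; (ii) compute $\det(U_1(q)\cdots U_N(q))$ two ways — once as $\prod_i \det U_i(q) = (-q)^N$, and once by the $2\times 2$ determinant of the explicit matrix, obtaining (up to an overall controllable sign coming from the $(-1)^{[\cdot/2]}$ exponents)
\[
E_{N-2}(x_2,\dots,x_{N-1};q)\,E_N(x_1,\dots,x_N;q) - E_{N-1}(x_1,\dots,x_{N-1};q)\,E_{N-1}(x_2,\dots,x_N;q)\ =\ \pm(-q)^N ;
\]
(iii) apply the same argument to the shifted string $x_1,\dots,x_{N+1}$, or alternatively reindex, so that the continuants appearing match the left-hand side of (d) — here one uses the second equality in part (b) (the $(Q'_N)^{-1}P'_N$ presentation) together with part (a) to rewrite the factor $E_{N-2}(x_2,\dots,x_{N-1})$ in terms of the reversed continuant $E_{N-1}(x_N,\dots,x_2)$ and $E_{N+1}$, producing the two-term combination displayed in (d); (iv) pin down the sign by specializing to small $N$ (e.g. $N=1$: $x_1\cdot x_2 - (x_1x_2-1)\cdot 1 = 1 = (-q)^1$ at $q=-1$, and $N=2$: $(x_1x_2-1)(x_3x_2-1) - (x_1x_2x_3-x_1-x_3)(x_2) + \text{correction}$, which one checks equals $1=(-q)^2$).

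Alternatively — and this may be cleaner — I would prove (d) directly by induction on $N$ using the recursions in part (a), which is the standard approach for Euler's identity $E_N E'_{N-1} - E_{N+1} E'_{N-2} = (\pm1)^N$ in the commutative case. The base case $N=1$ is the immediate computation above. For the inductive step, expand $E_{N+1}(x_1,\dots,x_{N+1};q)$ by the first recursion in (a) as $x_1 E_N(x_2,\dots,x_{N+1};q) + q E_{N-1}(x_3,\dots,x_{N+1};q)$, expand $E_N(x_1,\dots,x_N;q)$ likewise, substitute into the left-hand side of (d), and collect terms; the cross terms should telescope against the inductive hypothesis applied to the string $x_2,\dots,x_{N+1}$, leaving $q$ times $(-q)^{N-1} = (-q)^N$ up to sign. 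The only delicate point is that the $x_i$ do not commute, so one must be careful to expand $E_N(x_1,\dots,x_N)$ from the \emph{left} (first recursion in (a)) and $E_N(x_{N+1},x_N,\dots,x_2)$ also from its leftmost variable $x_{N+1}$ (which is the \emph{second}, right-hand recursion in (a) applied to the reversed string), so that the common factor one wants to cancel genuinely appears on the matching side in both products.

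The main obstacle I anticipate is precisely this noncommutative bookkeeping: keeping the two continuant factors expanded "from compatible ends" so that a clean cancellation occurs, and tracking the floor-function signs $(-1)^{[\,\cdot/2\,]}$ from part (c) (if one goes the determinant route) or the sign of $(-q)^N$ through the induction (if one goes the recursive route). None of this is conceptually hard, but it is the kind of computation where a misplaced sign or a reversed product order silently breaks the identity, so I would double-check against $N=1,2,3$ at the end.
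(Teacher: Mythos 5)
Your first route---computing $\det(U_1(q)\cdots U_N(q))$ once as $\prod_i\det U_i(q)=(-q)^N$ and once from the explicit entries---breaks at its very first step in the noncommutative setting: for $2\times2$ matrices over a noncommutative ring, the quantity $ad-bc$ is \emph{not} multiplicative, so $\det(AB)\neq\det(A)\det(B)$ in general. The whole content of identity (d) is that the noncommutative analogue of this "determinant formula'' nevertheless holds for this particular class of matrices; using multiplicativity of the determinant as a proof ingredient is therefore circular (it is precisely what (d) asserts). Your second route, induction on $N$ via the recursions in (a), is a legitimate direction and does not suffer from this problem, but the telescoping is not as clean as you indicate: after expanding $E_N(x_1,\ldots,x_N;q)$ and $E_{N+1}(x_1,\ldots,x_{N+1};q)$ from the left, the resulting brackets are not instances of the inductive hypothesis for the shifted string $x_2,\ldots,x_{N+1}$; you must also expand the two reversed-argument factors (from the right, via the second recursion in (a)), and then collect and recombine four sub-brackets rather than two. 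This can be pushed through, but as written your argument leaves a real gap where you say "the cross terms should telescope.''

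The paper's own proof of (d) is neither of these: it proceeds by \emph{direct comparison of monomials} in the two products. Each monomial of $E_N(x_1,\ldots,x_N;q)\,E_N(x_{N+1},\ldots,x_2;q)$ is of the form $q^{\dep(I)+\dep(J)}\,x_I\,x_{J^{\mathrm{rev}}}$ with $I\in\Cot[1,N]$ and $J\in\Cot[2,N+1]$, and similarly for the second product with $I'\in\Cot[1,N+1]$, $J'\in\Cot[2,N]$; one constructs a depth-preserving bijection between these two sets of index pairs that matches the monomials, with a single unmatched pair whose monomial is $q^N$, and the sign $(-1)^N$ comes from which side the orphan lands on (for $N$ odd it lives in the subtracted product, for $N$ even in the first). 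If you want to salvage your inductive approach you should check it honestly at $N=2,3$ before claiming the cancellation; alternatively, switch to the combinatorial route, which avoids sign-bookkeeping and product-order issues entirely since both sides are compared monomial by monomial in the free algebra.
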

 
 \noindent{\sl Proof:} (a)  follows from the bijection established in the proof of
  Proposition 
 \ref{prop:fibonacci}(a). Parts (b) and (c)  follow from (a) by induction. 
 Part (d) is obtained by direct comparison of monomials in the two products.
 \qed
 
 \begin{rem}\label{rem:Euler-det-id}
 After renumbering the variables $x_1, \cdots, x_{N+1}$ in the reverse order,
 part (d) of Proposition \ref{prop:euler-recurs} becomes
 \[
 E_N(x_{N+1}, \cdots, x_2; q) E_N(x_1,\cdots, x_N; q) - E_{N+1}(x_{N+1}, \cdots, x_1; q)
 E_{N-1}(x_2, \cdots, x_N; q) =(-q)^N,
 \]
 or, in the notation of part (b),
 \[
 Q'_{N+1}P_N - P'_{N+1} Q_N = (-q)^N. 
 \]
 Multiplying this by $(Q'_{N+1})^{-1}$ on the left and by $Q_N^{-1}$ on the right and
 applying  the identities of part (b), we can rewrite this as
 \be\label{eq:best-approx}
 R_{N+1}-R_N  \,=\, { (-1)^{N+1} q^N \over Q_N Q'_{N+1}},
 \ee
 which is a noncommutative lift of the classical formula for the difference
 between two successive convergents of a continued fraction, see, e.g.,
 \cite[\S 6]{perron}. In the noncommutative case, part (d) (for $N$ even and $q=1$)
 was established in \cite[Lem. A.1]{fairon}. 
 \end{rem}
 
 \subsection {Fibonacci and Chebyshev polynomials.}\label{subsec:Fib-Cheb-poly}
 
 By specializing all noncommutative variables in $E_N(x_1,\cdots, x_N;q)$
 to one commutative variable $x$, we obtain the
 $N$th  {\em  $2$-variable Fibonacci polynomial} 
 \[
 \Phi_N(x,q) \,=\, E_N(x,\cdots, x; q) \,=\,\sum_{0\leq k\leq N/2} 
 {N-k\choose k} x^{N-2k} q^k \,\in \, \k[x,q], 
 \]
 so that $\Phi_N(1,1)=\phi_N$. The following is also elementary,
 cf. \cite{benjamin, hoggatt}.
 
 \begin{prop}\label{prop:chebyshev}
 The specialization
 $\Phi_N(x, -1) = E_N(x,\cdots, x)$ is equal to $U_{N}(x/2)$, where 
 $U_{N}(x)$ is the $N$th Chebyshev polynomial
 of the second kind defined by
 \[
 U_N(\cos\theta)\sin\theta = \sin((N+1)\theta).
 \]
  In particular, the roots of $\Phi_N(x,-1)$ have the form
 $2\cos (k\pi /(N+1))$, $k=1,\cdots, N$. \qed
 
 \end{prop}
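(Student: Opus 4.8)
The plan is to show that both sides of the claimed identity satisfy the same three-term linear recurrence with the same initial values, and then to read off the roots from the trigonometric definition of $U_N$.

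First I would specialize the first recursion in Proposition \ref{prop:euler-recurs}(a) by setting all the noncommutative variables $x_i$ equal to a single commutative variable $x$ and putting $q=-1$. Since $\Phi_N(x,q)=E_N(x,\cdots,x;q)$ by definition, this yields $\Phi_N(x,-1) = x\,\Phi_{N-1}(x,-1) - \Phi_{N-2}(x,-1)$ for $N\geq 2$, together with the initial data $\Phi_0(x,-1)=E_0=1$ and $\Phi_1(x,-1)=E_1(x)=x$; as a sanity check one can also verify directly that $\Phi_2(x,-1)=E_2(x,x)=x^2-1$, which is $x\cdot\Phi_1-\Phi_0$.

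Next I would recall that the Chebyshev polynomials of the second kind satisfy $U_0(t)=1$, $U_1(t)=2t$ and $U_{n+1}(t)=2t\,U_n(t)-U_{n-1}(t)$; substituting $t=x/2$ and setting $V_n(x)=U_n(x/2)$ gives $V_0(x)=1$, $V_1(x)=x$ and $V_{n+1}(x)=x\,V_n(x)-V_{n-1}(x)$. This is literally the same recursion and the same initial conditions as for $\Phi_n(x,-1)$, so an immediate induction on $N$ gives $\Phi_N(x,-1)=U_N(x/2)$. (Alternatively one could expand $U_N(\cos\theta)=\sin((N+1)\theta)/\sin\theta$ as a polynomial in $\cos\theta$ and match its coefficients against the explicit binomial formula for $\Phi_N$, but the recursion argument is cleaner; the only point requiring any care is that the $+q$ in Proposition \ref{prop:euler-recurs}(a) becomes the $-1$ of the Chebyshev recursion after the substitution $q=-1$.)

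For the statement about roots: from $U_N(\cos\theta)\sin\theta=\sin((N+1)\theta)$ we get $U_N(\cos\theta)=0$ precisely when $\sin((N+1)\theta)=0$ while $\sin\theta\neq 0$, i.e.\ when $\theta=k\pi/(N+1)$ for $k=1,\cdots,N$. The $N$ numbers $\cos(k\pi/(N+1))$, $k=1,\cdots,N$, are pairwise distinct because cosine is injective on $(0,\pi)$, and $U_N$ is a polynomial of degree $N$; hence these are exactly its roots. Consequently the roots of $\Phi_N(x,-1)=U_N(x/2)$ are $x=2\cos(k\pi/(N+1))$, $k=1,\cdots,N$. There is essentially no serious obstacle in this argument — it is a one-line induction plus the classical identity for $U_N$ — so the bulk of the ``work'' is simply the bookkeeping of the base cases and the sign in the recursion.
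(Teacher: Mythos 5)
Your proof is correct. The paper itself omits the argument (the proposition is marked as elementary with a \qed and references to \cite{benjamin, hoggatt}), and the three-term-recursion comparison you give — deducing $\Phi_N(x,-1)=x\Phi_{N-1}(x,-1)-\Phi_{N-2}(x,-1)$ from Proposition \ref{prop:euler-recurs}(a) at $q=-1$, matching it against the standard recursion $U_{n+1}(t)=2tU_n(t)-U_{n-1}(t)$ under the substitution $t=x/2$, and then reading off the roots from $\sin((N+1)\theta)=0$ with $\theta\in(0,\pi)$ and a degree count — is precisely the standard elementary verification the authors have in mind.
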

 
 \subsection {Periodic orthogonals.} \label{subsec:peri-orth-decat}
 
 Let $C$ be a bilinear space and $A\subset C$
 a nondegenerate subspace. We say that $A$ is $N$-{\em periodic} ($N\geq 2$), if
 $A^{\perp (N)}=A$. In this case each $A^{\perp(i)}$, $i\in\ZZ$, is $N$-periodic as
 well. 
 
 Assume that $C=C_f$ is glued from bilinear spaces $A,B$ via an operator $f: A\to B$. 
 We want to find the conditions  on $f$ guaranteeing that $A$ (or, equivalently, $B$)
 is $N$-periodic in $C$. Here the case if even and odd $N$ are somewhat different.
 Since $\dim A^{\perp\perp}=\dim A$, for $N$ even the orthogonals split into two
 groups, the $A^{\perp(2i)}$ and the $B^{\perp(2i)}$, while for $N$ odd and $A$ periodic
  we necessarily have $\dim A=\dim B = {1\over 2} \dim C$. 
 
 \begin{prop}\label{prop:period=E_{N-1}}
Let $N\geq 2$, let $A,B$ be bilinear spaces and, in the case $N$ is odd, assume
$\dim A=\dim B$.  Let $f: A\to B$ be a linear operator. In order that $A\subset C_f$
be $n$-periodic, it is nevessary and sufficient that
$E_{N-1}(f^*, f^{**}, \cdots f^{(N-1)})=0$. In this case we also have
$E_{N-1}(f^{**}, f^{***}, \cdots, f^{(N)})=0$, while
$E_{N-2}(f^{**}, \cdots, f^{(N-1)})$ and $E_N(f^*, f^{**}, \cdots, f^{(N)})$
are isometries. 
 \end{prop}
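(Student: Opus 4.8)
The plan is to realise the chain of iterated orthogonals $A_j:=A^{\perp(j)}$ (with $A_0=A$) inside $C=C_f$ by iterated mutations, and to detect $N$-periodicity through the vanishing of a single entry of a $2\times 2$ ``change of frame'' matrix whose entries are Euler polynomials in the iterated adjoints of $f$. First I would record, by Proposition \ref{prop:SOD=nondeg} and induction on $j$, that every $A_j$ is nondegenerate and that $(A_j,A_{j-1})$, $(A_{j+1},A_j)$ are SODs of $C$; write $\phi_j\colon A_j\to A_{j-1}$ for the gluing operator of $(A_j,A_{j-1})$, so $\phi_0=f$. By Proposition \ref{prop:glue-for-perp}, $\phi_{j+1}=(-\phi_j^*)\circ M_{A_j}$ with $M_{A_j}\colon A_{j+1}\to A_{j-1}$ the mutation isometry, and by Corollary \ref{cor:coord-change} the coordinate change between the internal decompositions $C=A_j\oplus A_{j+1}$ and $C=A_{j-1}\oplus A_j$ is
\[
\Phi_j=\begin{pmatrix}0 & M_{A_j}\\ \id & -\phi_j^*M_{A_j}\end{pmatrix}\colon A_j\oplus A_{j+1}\xrightarrow{\sim}A_{j-1}\oplus A_j .
\]
I then set $\mathbf M^{(N)}:=\Phi_0\Phi_1\cdots\Phi_{N-1}\colon A_{N-1}\oplus A_N\to A_{-1}\oplus A_0$, with block form $\mathbf M^{(N)}=\left(\begin{smallmatrix}a_N & b_N\\ c_N & d_N\end{smallmatrix}\right)$. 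Since each $\Phi_j$ merely re-expresses an element of $C$ in a new pair of complementary summands, $\mathbf M^{(N)}$ is the identity of $C$ read in two frames: $(a_Nw+b_Nv)+(c_Nw+d_Nv)=w+v$ in $C$ for all $(w,v)$.

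Next I would reduce the question to the vanishing of $c_N$. Applying the right orthogonal (a bijection on subspaces) gives $A_N=A_0\iff A_{N-1}=A_{-1}$; the displayed identity above shows $A_{N-1}\subseteq A_{-1}\iff c_N=0$, and when $c_N=0$ the injectivity of $\mathbf M^{(N)}$ forces $a_N$ injective, hence bijective once $\dim A_{N-1}=\dim A_{-1}$. Because $\dim A^{\perp(j)}$ depends only on the parity of $j$ (equalling $\dim A$ or $\dim B$), the hypothesis $\dim A=\dim B$ for $N$ odd secures this dimension equality. Hence $A$ is $N$-periodic $\iff c_N=0$. Furthermore, if $A$ is $N$-periodic then $A_{N-1}=A_{-1}$ and $A_N=A_0$, so the two frames literally coincide and $\mathbf M^{(N)}=\id$; in particular $b_N=0$ and $a_N=d_N=\id$ in that case.

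To identify the entries I would transport everything onto the two fixed spaces $A_0,A_{-1}$ via isometries $P_j$ ($P_j\colon A_j\to A_0$ for $j$ even, $P_j\colon A_j\to A_{-1}$ for $j$ odd) defined by $P_0=P_{-1}=\id$ and $P_{j+1}=P_{j-1}\circ M_{A_j}$ — each $P_j$ is an isometry since $M_{A_j}$ is. Using $M_{A_j}=P_{j-1}^{-1}P_{j+1}$, $P_j^*=P_j^{-1}$ and $\phi_{j+1}=-\phi_j^*M_{A_j}$, a short computation shows that $\widehat\Phi_j:=\operatorname{diag}(P_{j-1},P_j)\,\Phi_j\,\operatorname{diag}(P_j,P_{j+1})^{-1}$ equals $\left(\begin{smallmatrix}0 & \id\\ \id & g_{j+1}\end{smallmatrix}\right)$ with $g_j:=P_{j-1}\phi_jP_j^{-1}$, and that $g_0=f$, $g_{j+1}=-g_j^*$, whence $g_j=(-1)^jf^{(j)}$. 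Thus $\widehat\Phi_{i-1}$ is precisely the matrix $U_i$ of Proposition \ref{prop:euler-recurs}(c) at $x_i=f^{(i)}$, and, the conjugations telescoping because $P_{-1}=P_0=\id$,
\[
\mathbf M^{(N)}\operatorname{diag}(P_{N-1},P_N)^{-1}=\widehat\Phi_0\cdots\widehat\Phi_{N-1}=U_1\cdots U_N\big|_{x_i=f^{(i)}} .
\]
(This substitution makes sense because consecutive elements of a cotwinned set have opposite parity and the least element of a non-empty one is odd, so every monomial occurring in an Euler polynomial in the alternatingly directed operators $f^{(i)}$ is a composable string.) Reading off Proposition \ref{prop:euler-recurs}(c), up to a sign and post-composition with the isometry $P_{N-1}$ or $P_N$ this yields $c_N\leftrightarrow E_{N-1}(f^*,f^{**},\dots,f^{(N-1)})$, $b_N\leftrightarrow E_{N-1}(f^{**},\dots,f^{(N)})$, $a_N\leftrightarrow E_{N-2}(f^{**},\dots,f^{(N-1)})$, $d_N\leftrightarrow E_N(f^*,\dots,f^{(N)})$.

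Putting the pieces together: $A$ is $N$-periodic $\iff c_N=0\iff E_{N-1}(f^*,f^{**},\dots,f^{(N-1)})=0$, the stated criterion. When it holds, $\mathbf M^{(N)}=\id$ gives additionally $b_N=0$, i.e. $E_{N-1}(f^{**},f^{***},\dots,f^{(N)})=0$, while $a_N=d_N=\id$ forces (the $P_j$ being isometries and $\pm\id$ an isometry) that $E_{N-2}(f^{**},\dots,f^{(N-1)})$ and $E_N(f^*,\dots,f^{(N)})$ are isometries. The hard part will be the conjugation step $\Phi_j\mapsto\widehat\Phi_j=U_i$: keeping track of exactly where each mutation isometry $M_{A_j}$ goes and checking that all of them cancel is the real content of the proof; once that is done, the surrounding reductions and the continuant matrix identity of Proposition \ref{prop:euler-recurs}(c) make everything formal.
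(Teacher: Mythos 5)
Your proof is correct and takes essentially the same route as the paper: both set up the product of coordinate-change $2\times2$ matrices between consecutive SODs (your $\mathbf M^{(N)}=\Phi_0\cdots\Phi_{N-1}$ is the paper's $W_1\cdots W_N$), reduce $N$-periodicity to vanishing of the lower-left entry, and use Proposition \ref{prop:euler-recurs}(c) to recognize the four entries as $\pm E_{N-2},\pm E_{N-1},\pm E_{N-1},\pm E_N$ in the iterated adjoints. The only (cosmetic) difference is that you absorb the mutation isometries by conjugating each $\Phi_j$ with $\operatorname{diag}(P_{j-1},P_j)$ so the product literally telescopes to $U_1\cdots U_N$ on two fixed spaces $A_0,A_{-1}$, whereas the paper keeps the mutations explicit by writing $-f_i^*=(-1)^iM'f^{(i)}M''$ and absorbing the resulting sandwiches $M'_{ij}E_\cdot M''_{ij}$ into isometry factors at the end; your version makes the bookkeeping of "exactly where each $M_{A_j}$ goes" cleaner, which you correctly flag as the real technical content.
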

 
 \noindent{\sl Proof:}\ul{ Necessity}: For $i\geq 1$ let 
 $V_i: A^{\perp (i-1)}\oplus A^{\perp(i)} \to A^{\perp (i-2)}\oplus
A^{\perp(i-1)}$ 
  be the   coordinate change
transformation 
induced by the identifications of the source and target
with $C$.
If $A$ is $N$-periodic, then $A^{\perp(N)}=A$, $A^{\perp(N-1)}
=\lperp A$ and $V_1\cdots V_N = \Id$. On the other hand, by Proposition 
 \ref{cor:coord-change}, the $2\times 2$ block matrix of $V_i$  is
 \[
 W_i \,=\,\begin{bmatrix}
 0 & M_{A^{\perp (i-1)}} 
 \\
 1 & -f_i^* M_{A^{\perp(i-1)}}
 \end{bmatrix},
 \]
where $f_i: A^{\perp(i-1)} \to A^{\perp(i-2)}$ is the gluing operator for the SOD 
$(A^{\perp(i-1)}, A^{\perp(i-2)})$. The condition $V_1\cdots V_N=\Id$ as operators
is equavalent to 
 $W_1\cdots W_N=1$ as matrices. 
By iterating Proposition 
\ref{prop:glue-for-perp}, we obtain that
$-f_i^* = (-1)^i M' f^{(i)} M''$, where, for $i$ even we have put
\[
M'= M^{-1}_{A^{\perp (i-2)}} M^{-1}_{A^{\perp(i-3)}} \cdots M^{-1}_{A^\perp}, \quad
M'' = M_A M_{A^{\perp\perp}}\cdots M_{A^{\perp(i-2)}}, 
\]
while for $i$ odd we have put
\[
M'= M^{-1}_{ A^{\perp(i-2)}} M^{-1}_{A^{\perp(i-4)}}\cdots M^{-1}_A, \quad
M'' = M_{A^\perp} M_{A^{\perp (3)}}\cdots M_{A^{\perp(i-2)}}. 
\]
So applying Proposition \ref{prop:euler-recurs}(c), we get 
\[
W_1\cdots W_N\,=\,\begin{bmatrix}
\pm M'_{11} E_{N-2}(f^{**}, \cdots, f^{(N-1)}) M''_{11} &
\pm M'_{12} E_{N-1}(f^{**}, \cdots, f^{(N)}) M''_{12}
\\
\pm M'_{21} E_{N-1}(f^*, \cdots, f^{(N-1)}) M''_{21}  &
\pm M'_{22} E_N(f^*, \cdots, f^{(N)}) M''_{22}
\end{bmatrix}, 
\]
 where $M'_{ij}, M''_{ij}$ are isometries (appropriate compositions of mutations
 or their inverses). Therefore $W_1\cdots W_N=1$ being the unit matrix
 means that 
 \[
 E_{N-1}(f^*, \cdots, f^{(N-1)}) \,=\, E_{N-1}(f^{**}, \cdots, f^{(N)})\,=\,0, 
 \]
 whille $E_{N-2}(f^{**}, \cdots, f^{(N-1)})$ and $E_N(f^*, \cdots, f^{(N)})$
 are isometries. 
 
 \vskip .2cm
 
\ul{ Sufficiency}: Suppose $ E_{N-1}(f^*, \cdots, f^{(N-1)})=0$. Then, the matrix
 $W_1\cdots W_N$ is block triangular  and therefore
 $A^{\perp(N-1)} \subset \lperp A$. Since
 $\dim A^{\perp(N-1)}  = \dim \lperp A$
  (always for $N$ even, in virtue of our assumptions
 for $N$ odd), we conclude that $ A^{\perp(N-1)}=  \lperp A$, i.e., the chain
 of orthogonals is $N$-periodic. \qed
 
 \begin{rems}
 (a) Since $N$-periodicity of $A$ is equivalent to $N$-periodicity of $A^{\perp(i)}$ for
 any $i$, the  conditions of Proposition \ref{prop:period=E_{N-1}} for $f$ 
 imply the validity of the same conditions for any $f^{(i)}$.
 
 \vskip .2cm
 
 (b) The conclusion that the operators  $E_{n-N}(f^{**}, \cdots, f^{(N-1)})$ and $E_N(f^*, f^{**}, \cdots, f^{(N)})$ (denote them for short $E_{n-2}$ and $E_n$)
are isometries, can be seen directly from  Proposition \ref{prop:euler-recurs}(d).
For example, isometricity of  $E_N$ means that $E_N E_N^*$ and $(\lst E_N) E_N$  
are equal to the identity operators. Applying Proposition \ref{prop:euler-recurs}(d) 
with $q=-1$, we write, say,  $E_N E_N^*$ as
\[
 E_N(f^*, \cdots, f^{(N)}) E_N(f^{(N+1)}, \cdots, f^{(2)}) =
E_{N+1}(f^*, \cdots, f^{(N+1)}) E_{N-1}(f^{(N)}, \cdots, f^{(2)})+1,
\]
and $E_{N-1}(f^{(N)}, \cdots, f^{(2)})=0$ by passing to the right adjoint of 
$E_{N-1}(f^*, \cdots, f^{N-1})=0$.

 \vskip .2cm
 
 (c) Using Proposition \ref{prop:euler-recurs}(b), we can express the condition of
 $N$-periodicity for $f$ symbolically as the vanishing of the continued fraction:
 \[
  f^* - \cfrac{1}{f^{**} - \cfrac{1}{\ddots \,  - \cfrac{1}{  f^{(N-1)}   }}} \,=\,0. 
 \]
 More precisely, in this case the numerator of the continued fraction turns out to be $0$
 while the denominator is invertible, so the equality makes sense. 
 \end{rems}
 
 \begin{exas}
 (a) $2$-periodicity. For $A=C_f$ to be $2$-periodic, i.e., to have $A^{\perp\perp}=A$,
 is the same as $f=0$. Now, Proposition  \ref{prop:period=E_{N-1}} gives the condition
 as $E_1(f^*)=f^*=0$ which amounts to the same. In this case $E_0=1$
 and $E_1(f^*, f^{**})=f^* f^{**}-1 =-1$ are obviously invertible.
 
 \vskip .2cm
 
 (b) $3$-periodicity. The condition of Proposition  
 \ref{prop:period=E_{N-1}}
 (assuming
 $\dim A=\dim B$) is  that $E_2(f^*, f^{**})=f^*f^{**}=1=0$. This means that $f^*$
 (and therefore $f$) is an isometry. Thus $C_f$ is $3$-periodic if and only if $f$
 is an isometry. In this case $E_1(f)=f^*$ is of course an isometry, while
 \[
 E_3(f^*, f^{**}, f^{***}) \,=\, f^* f^{**} f^{***}-f^* -f^{***} \,=\,
 f^{-1} f f^{-1} -f^{-1} -f^{-1} \,=\, -f^{-1}
 \]
 is an isometry as well.
 
 \vskip .2cm
 
 (c) $4$-periodicity.  Proposition  \ref{prop:period=E_{N-1}} gives the condition as
 \[
  E_3(f^*, f^{**}, f^{***}) \,=\, f^* f^{**} f^{***}-f^* -f^{***} \,=\,0
 \]
 and says that in such case both $E_2(f^*, f^{**})=f^*f^{**}-1$ and
 \[
 E_4(f^*, \cdots f^{(4)}) \,=\, f^* f^{**} f^{***} f^{(4)} -f^* f^{**} - f^* f^{(4)} - f^{***} f^{(4)} +1
 \]
 are isometries.  
 \end{exas}
 
 \begin{ex}[($N$-periodicity in $2$ dimensions)] \label{ex:2d}
 Let $A=B=\k$ be $1$-dimensional with the standard bilinear form in which $1\in\k$
 is of length $1$. Then $f: A\to B$ is given by a number $\rho\in \k$, and each
 iterated adjoint $f^{(i)}$ is given by $\rho$ as well. Explicitly, gluing an SOD via
 $\rho\in\k$ amounts to considering $C=\k^2$ with the bilinear form $\beta$ given
 by the matrix $\begin{bmatrix} 1&\rho\\0&1\end{bmatrix}$. 
 The condition of $N$-periodicity is then given by vanishing
 of $E_{N-1}(\rho,\cdots, \rho) = \Phi_{N-1}(\rho,-1)$. By Proposition 
 \ref{prop:chebyshev} this means that $\rho$ must have the form
 $2\cos(k\pi/N)$ for some $k=1,\cdots, N-1$. 
 We see, in particular, that the only possible integer values of $\rho$ are $\rho=\pm 1$
 corresponding to $N=3$. 
 
 \end{ex}
 
 \begin{exas}[(Even periodicity for $A=\k$ and $B$ symmetric)]
 Let $A=\k$ as before and assume for simplicity that $\beta=\beta_B$ is symmetric.
 Assume further that $n$ is even. 
 A linear operator $\k\to B$ is given by a vector $v\in B$ via $f_v(t)=tv$. 
 We can ask for which $v$ the SOD glued via $f_v$ is $n$-periodic. 
 The reasoning of Example \ref{ex:2d} is modified straightforwardly to give thar
 $\rho=\sqrt{B(v,v)}$ must be among the roots of $\Phi_{n-1}(x,-1)$, i.e., that
 $B(v,v)$ must be the square of one of these roots.  This gives two possibilities
 of integer $B(v,v)$, which can be realized inside an integer lattice.
 
 \vskip .2cm
 
 (a) $4$-periodicity. It corresponds to $B(v,v) = (2\cos(k \pi/4))^2$, $k=1,2,3$,
 i.e.,  $B(v,v)=2$ (``root vector'', $k=1,3$) or $B(v,v)=0$ (null-vector, $k=2$). The first case
 is an analog of the concept of a spherical object in an even-dimensional  Calabi-Yau 
 category. In the second case we   have $2$-periodicity which of course
 implies $4$-periodicity.
 
 \vskip .2cm
 
 (b) $6$-periodicity. It corresponds to $B(v,v) = (2\cos (k\pi/6))^2$, $k=1,\cdots, 5$, 
 which gives
 $B(v,v)=3$ (for $k=1,5$) or $B(v,v)=1$ (for $k=2,4$) or
 $B(v,v)=0$ (for $k=3$), the last case corresponding to $2$-periodicity as before. 
 
 \end{exas}
 

\section{Stable $\oo$-categories,  semi-orthogonal decompositions and lax matrices}
\label{sec:stable-oo}

\subsection{Generalities on (stable) $\oo$-categories.}\label{subsec:gen-oo-cat}

\paragraph{Conventions.} We follow the standard conventions of \cite{lurie:htt, lurie:ha} as summarized in \cite[\S 1.1]{dkss-spher} to which we refer for more details. Thus, $\oo$-categories are understood as simplicial sets satisfying the weak Kan condition.
The (ordinary) category of simplicial sets is denoted by $\Set_\Delta$. For a simplicial set $S$ we denote by $S^\op$ the opposite
simplicial set \cite[\S 1.2.1]{lurie:htt}. 
The nerve of an ordinary category $C$ will be denoted $\N(C)\in\Set_\Delta$. Thus $\N(C^\op) = \N (C)^\op$
We denote by $\Sp$ the $\oo$-category of spaces (understood as Kan simplicial sets). 
For any two objects $x,y$ of an $\oo$-category $\C$ one has a homotopy canonical (i.e., defined uniquely up to a contractible space of choices)
space $\Map_\C(x,y)\in\Sp$ called the {\em mapping space} from $x$ to $y$. 
The $\oo$-categorical limits and colimits will be denoted $\varprojlim$ and $\varinjlim$. They are defined in the standard way by universal
properties which are understood as
contractibility if appropriate mapping spaces. An $\oo$-functor between $\oo$-categories is called {\em exact}, if it preserves all limits
and colimits. We denote by $\ExFun(\C, \D)$ the $\oo$-category of exact $\oo$-functors between $\oo$-categories
$\C$ and $\D$. It is stable, if $\D$ is stable. 
 
  \vskip .2cm 
  
 For a morphism $f: x\to y$ in a stable $\oo$-category $\C$ its {\em fiber} and {\em cofiber (cone)} are defined by
  \[
   \Fib(f) \,=\, \varprojlim \, \{ x\buildrel f\over\to y \leftarrow 0\},
    \quad \Cof(f)=\Cone(f) \,=\,   \varinjlim \, \{ 0\leftarrow x \buildrel f\over\to y\},  
   \]
   while the {\em shift functors} (suspension and desuspension) are defined by
    \[
   x[1] \,=\,\Cof\{x\to 0\}, \quad x[-1] \,=\,\Fib\{ 0\to x\}. 
   \]
   It is standard that  
   \be\label{eq:fib=cof[-1]}
   \Fib(f) \simeq \Cof (f)[-1].
   \ee
    A diagram
   \[
   x\buildrel f\over\lra y \buildrel g\over\lra z
   \]
 in a stable $\oo$-category $\C$ is called a {\em triangle}, if $g$ induces an equivalence
 $\Cof(f)\to  z$ or, equivalently, if $f$ induces an equivalence $x\to \Fib(g)$. 
 Such triangles give distinguished triangles in the homotopy category $\h\C$, making it into a triangulated category.

\paragraph{Totalization of cubes.}
  We will need to iterate the fiber and cofiber constructions. 

Let $\2=\{0 < 1\}$ be the standard $2$-element poset. Thus the nerve $\N(\2)=\Delta^1$ is the
$1$-simplex. Let $I$ be a finite set. We denote by $\2^I$ the set of all subsets in $I$,
ordered by inclusion. We refer to this poset as the $I$-{\em cube}. For $N\in\ZZ_{>0}$
we write $\2^N=\2^{[1,N]}$ and call it the $N$-{\em cube}. 

Let $\C$ be an $\oo$-category. By an $I$-{\em cube in} $\C$ we mean an $\oo$-functor
$Q: \2^I\to\C$. A {\em morphism of $I$-cubes in} $\C$ is a natural transformation of
$\oo$-functors.  Let $i\in I$. Any $I$-cube $Q$ in $\C$ gives two $I\-\{i\}$-cubes
$\del_i^0 Q, \del_i^1 Q$ in $\C$ connected by a morphism 
of $I\-\{i\}$-cubes $\del_i^0 Q \to \del_i^1 Q$.
 They are obtained by composing $Q$ with the
two emdeddings
\[
\delta_i^0, \delta_i^1: I\- \{i\} \to I, \quad \delta_i^0(J)=J, \quad 
\delta_i^1(J)=J\cup\{i\}. 
\]
 Given an $I$-cube $Q$ in $\C$, we denote by $Q^{\neq\emptyset}$ the restriction of
 $Q$ to $\2^I\-\{\emptyset\}$. 
 
 Assume now that $\C$ is a stable $\oo$-category and $Q: \2^I\to \C$ is an $I$-cube in $\C$.
 Adapting the approach of  \cite[Def. 1.1b]{goodwillieII},
 we define the {\em totalization} of $Q$ to be the object
 \be
 \Tot(Q) \,=\, \Fib\bigl\{ Q(\emptyset) \buildrel c\over\lra \varprojlim Q^{\neq\emptyset}\bigr\},
 \ee
where $c$ is the canonical morphism given by the structure of an $I$-cube on $Q$.

\begin{prop}\label{prop:Tot(Q)=iterfiber}
(a) For each $i\in I$ we have a triangle
\[
\Tot(Q) \lra \Tot(\del_i^0Q) \lra \Tot(\del_i^1Q). 
\]
(b) We have an equivalence
\[
\Tot(Q) \,\=\, \Cof\bigl\{ \varinjlim Q^{\neq I}\buildrel c'\over  \lra Q(I)\bigr\}[-|I|],
\]
the canonical morphism $c'$ being provided by the structure of $I$-cube on $Q$.
\end{prop}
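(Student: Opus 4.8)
The plan is to reduce both statements to elementary manipulations of limits and cofibers in the stable $\oo$-category $\C$, using the defining formula $\Tot(Q) = \Fib\{Q(\emptyset) \to \varprojlim Q^{\neq\emptyset}\}$ together with the compatibility of (co)limits with the cube decomposition along a coordinate $i$. For part (a), the key observation is that the poset $\2^I$ decomposes as the join-free pushout of $\2^{I\setminus\{i\}}\times\{0\}$ and $\2^{I\setminus\{i\}}\times\{1\}$, so that an $I$-cube is the same datum as a morphism $\del_i^0 Q \to \del_i^1 Q$ of $(I\setminus\{i\})$-cubes. First I would check that the formation of $\varprojlim$ over $\2^I\setminus\{\emptyset\}$ can be computed in two stages: the punctured $I$-cube $Q^{\neq\emptyset}$ itself fibers over the coordinate $i$ into the full $(I\setminus\{i\})$-cube $\del_i^1 Q$ (the face $\{J\cup\{i\}\}$, which includes $\emptyset\cup\{i\}=\{i\}$) and the punctured $(I\setminus\{i\})$-cube $(\del_i^0 Q)^{\neq\emptyset}$ (the face $\{J : J\neq\emptyset\}$). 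Then $\varprojlim Q^{\neq\emptyset} \simeq \varprojlim\big(\del_i^1 Q \to \text{(cone point / comparison)} \leftarrow (\del_i^0 Q)^{\neq\emptyset}\big)$ realized as a pullback. Feeding this into the defining fiber sequence for $\Tot(Q)$ and using the octahedral-type axiom (iterated fibers commute, $\Fib$ of a composable pair fits in a triangle), the triangle $\Tot(Q)\to\Tot(\del_i^0 Q)\to\Tot(\del_i^1 Q)$ falls out. The cleanest way to organize this is: write $\Tot(\del_i^0 Q) = \Fib\{Q(\emptyset)\to \varprojlim(\del_i^0 Q)^{\neq\emptyset}\}$, observe $Q(\emptyset)$ is the same initial vertex, and identify $\Tot(\del_i^1 Q)$ with the fiber of the induced map $\varprojlim(\del_i^0 Q)^{\neq\emptyset} \to \varprojlim Q^{\neq\emptyset}$ after a shift — then the three fibers assemble by the standard "fiber of a composite" triangle.

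For part (b), I would argue by induction on $|I|$ using part (a). The base case $|I|=1$ is the identity $\Fib(f)\simeq\Cof(f)[-1]$ recorded in \eqref{eq:fib=cof[-1]}: for a $1$-cube $Q: \{0<1\}\to\C$, $\Tot(Q) = \Fib\{Q(\emptyset)\to Q(\{1\})\}$ while $\varinjlim Q^{\neq I} = Q(\emptyset)$ and $Q(I) = Q(\{1\})$, so both sides agree. For the inductive step, pick $i\in I$; by (a) we have $\Tot(Q)\to\Tot(\del_i^0 Q)\to\Tot(\del_i^1 Q)$, hence $\Tot(Q)\simeq\Fib\{\Tot(\del_i^0 Q)\to\Tot(\del_i^1 Q)\}\simeq\Cof\{\Tot(\del_i^0 Q)\to\Tot(\del_i^1 Q)\}[-1]$. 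By the inductive hypothesis applied to the two $(I\setminus\{i\})$-cubes, $\Tot(\del_i^\epsilon Q)\simeq\Cof\{\varinjlim (\del_i^\epsilon Q)^{\neq I\setminus\{i\}}\to (\del_i^\epsilon Q)(I\setminus\{i\})\}[-(|I|-1)]$. The remaining point is to identify the colimit over $\2^I\setminus\{I\}$ with the appropriate pushout of the colimits over the two punctured subcubes: dually to the analysis in (a), $\2^I\setminus\{I\}$ fibers over coordinate $i$ into $\del_i^0 Q$ (the full $(I\setminus\{i\})$-cube, containing $I\setminus\{i\}$) and $(\del_i^1 Q)^{\neq I\setminus\{i\}}$ (the punctured one), glued along $(\del_i^0 Q)^{\neq I\setminus\{i\}}$. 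Running the cofibers through this pushout square and shifting by $-1$ yields exactly $\Cof\{\varinjlim Q^{\neq I}\to Q(I)\}[-|I|]$, completing the induction.

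The main obstacle I anticipate is bookkeeping the bicartesian squares correctly — specifically, being careful that when one "peels off" the coordinate $i$, the two sub-faces of $\2^I\setminus\{\emptyset\}$ (resp. $\2^I\setminus\{I\}$) overlap precisely in a punctured lower-dimensional cube, so that $\varprojlim$ (resp. $\varinjlim$) over the big punctured cube is genuinely the pullback (resp. pushout) of the pieces. This is a statement about left/right cofinality of the relevant inclusions of poset slices, and it is the kind of thing that is "obvious" but needs the cofinality criterion (Quillen's Theorem A / Lurie \cite[\S 4.1]{lurie:htt}) to nail down rigorously. Everything else is formal: the initial/terminal vertex $Q(\emptyset)$ (resp. $Q(I)$) is shared, so it does not interfere, and the shift by $|I|$ accumulates one unit per inductive step via \eqref{eq:fib=cof[-1]}. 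I would also remark that (a) is self-dual under $Q\mapsto Q^\op$ composed with $J\mapsto I\setminus J$, which is why the symmetric-looking statement (b) is natural; one could alternatively prove (b) directly by the dual of the two-stage-limit computation without invoking (a), but the inductive route keeps the cofinality checks to the minimal $|I|=1$ versus $|I|-1$ comparison.
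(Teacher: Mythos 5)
Your plan is correct and matches the paper's own (very terse) proof: part (a) by a ``Fubini'' two-stage computation of $\varprojlim Q^{\neq\emptyset}$ over the two $i$-faces followed by the fiber-of-a-composite triangle, and part (b) by induction on $|I|$ using (a), with base case $\Fib(f)\simeq\Cof(f)[-1]$. One local slip in your write-up of (a): the canonical map is the restriction $\varprojlim Q^{\neq\emptyset}\to\varprojlim(\del_i^0 Q)^{\neq\emptyset}$, not the reverse, and $\Tot(\del_i^1 Q)$ is its fiber with no shift, as follows from pullback-stability of $\Fib$ applied to your decomposition $\varprojlim Q^{\neq\emptyset}\simeq \varprojlim(\del_i^0 Q)^{\neq\emptyset}\times_{\varprojlim(\del_i^1 Q)^{\neq\emptyset}} Q(\{i\})$.
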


Part (a) of the proposition says that $\Tot(Q)$ can be defined as the iterated fiber of the arrows in $Q$. 

\vskip .2cm

\noindent{\sl Proof:}   (a) is proved by computing the limit in two steps (``Fubini theorem''). 
 Part (b), in the simplest case $|I|=1$, reduces to  \eqref{eq:fib=cof[-1]}. The general
 case follows from this  by induction, using (a). 
 We omit straightforward details. \qed

 \paragraph {Grothendieck constructions.} 
 We will freely use the concepts of Cartesian and coCartesian fibrations of simplicial sets \cite{lurie:htt}.
 Given an ordinary category $C$ and an (ordinary) functor $q: C\to \Set_\Delta$, we denote by 
 $\Gamma(q)$ resp. $\chi(q)$ the {\em covariant} resp. {\em contravariant Grothendieck construction} of $q$,
 see, e.g., \cite[Def. 1.1.9] {dkss-spher}. It is equipped with a coCartesian resp. Cartesian fibration
 \[
 \pi: \Gamma(q) \to \C, \quad \text{resp.} \quad \varpi: \chi(q)\to \C^\op, \quad \C:=\N C,
 \]

 \begin{ex}
 We will be particularly interested in the case $C=\2$ (the poset considered
 as a category) so that $\C=\Delta^1$ 
 and  $q$ reduces to a morphism of simplicial sets $F: \A=q(0)\to \B=q(1)$. We further assume that $\A,\B$ are
 $\oo$-categories, so $F$ is an $\oo$-functor. In this case we use the notations $\Gamma(F), \chi(F)$ for corresponding
 Grothendieck constructions. At the low levels, they are described as follows, cf. \cite[Ex. 1.1.11]{dkss-spher}.
 
 Vertices of  $\Gamma(F)$  are formal symbols $[a], a\in\Ob(\A)$
 and $[b], b\in\Ob(\B)$.  The same for $\chi(F)$. 
 Further, edges among the $[a]$ or the $[b]$ are
  the same as edges (morphisms) in $\A$ and $\B$ in both cases. In adidition, in $\Gamma(F)$ we have new edges 
 $[a]\to [b]$ corresponding to morphisms $F(a)\to b$ in $\B$, while in $\chi(F)$ we have new edges $[b]\to[a]$
 corresponding to morphisms $b\to F(a)$. 
 
 General relations between (co)Cartesian fibrations and pseudo-functors \cite[Th. 3.2.0.1]{lurie:htt} 
 imply the following. Each coCartesian fibration $\pi: \X\to \Delta^1$ with $\A=\pi^{-1}(0)$
 and $\B=\pi^{-1}(1)$ being $\oo$-categories,   can be identified with $\Gamma(F)$
 for a  homotopy canonical functor $F:  \A\to\B$. Similarly, a Cartesian fibration $\varpi: \Y\to\Delta^1$
 with $\A=\varpi^{-1}(0)$
 and $\B=\varpi^{-1}(1)$ being $\oo$-categories,  can be identified with $\chi(F)$
  for a  homotopy unique functor $F:  \A\to\B$. See \cite[Ex. 1.1.12]{dkss-spher} for explicit details. 
 \end{ex}
 
 \paragraph{Adjoint $\oo$-functors.} Let $F: \A\to\B$ be an $\oo$-functor of $\oo$-categories. 
 We say that $F$ {\em admits a right adjoint}, if the covariant Grothendieck construction
 $\Gamma(F)\to\Delta^1$ (which is always a coCartesian fibration), is also a Cartesian fibration. In this
 case $\Gamma(F)^\op \to(\Delta^1)^\op=\Delta^1$ is a Cartesian fibration and so comes from
 a homotopy unique $\oo$-functor $G: \B\to\A$ which is called the {\em right adjoint} to $F$ and denoted $G=F^*$.
 In this case we have homotopy coherent identifications of mapping spaces
 \[
 \Map_\B(F(a), b) \,\= \, \Map_\A(a, G(b)), \quad a\in \A, b\in \B.
 \]
 In particular, we have the unit and counit natural transformations 
 \be
 e: \Id_\A \lra GF, \quad c: FG\lra \Id_\B. 
 \ee
 Dually, an $\oo$-functor $G: \B\to \A$ is said to {\em admit a left adjoint}, if $\chi(G)\to \Delta^1$ is also a coCartesian fibration.
 In this case $\chi(G)^\op$ is Cartesian and so comes from a  homotopy unique functor $F: \A\to B$ which is called 
 {\em left adjoint} to $G$ and denoted ${^*G}$. If $G=F^*$, then $G={^*}F$ and vice versa. We also express this by saying 
 that  $(F,G)$ is an
 {\em adjoint pair}.   
 

 \subsection{Semi-orthogonal decompositions}\label{subsec:SOD}
 
 \paragraph{General definitions. Mutations. } 
 In   \cite[\S 2.2]{dkss-spher},  the theory of semi-orthogonal
 decompositions of triangulated categories \cite{BK:SOD} was extended to the stable
 $\oo$-categorical context. We recall the main points, referring to \cite{dkss-spher}
 for more details.

    Let $\C$ be a stable $\oo$-category. 
A full subcategory $\A\subset \C$ is called {\em stable}, if it contains $0$ and is closed
under taking fibers and cofibers of morphisms. Such $\A$ is a stable $\oo$-category
in the intrinsic sense.

\vskip .2cm

Given stable $\A\subset \C$, its   {\em left} and {\em right orthogonals} are full $\oo$-subcategories
in $\C$ defined by
\[
\begin{gathered}
^\perp\A\,=\, \bigl\{ c\in \C\bigl| \, \Map_\C(c,a) \text{ is contractible, }\,\,\forall \, a\in\Ac\bigr\}, 
\\
\A^\perp \,=\, \bigl\{ c\in \C\bigl| \, \Map_\C(a,c) \text{ is contractible, }\,\,\forall \, a\in\Ac\bigr\}. 
\end{gathered}
\]
They are automatically stable. The iterated orthogonals are defined inductively as 
\[
\A^{\perp n} = (\A^{\perp (n-1)})^\perp, \quad {^{\perp n}}\A = {^\perp}(^{\perp (n-1)}A), \quad n\geq 1. 
\]

\vskip .2cm

Let $\A, \B$ be stable subcategories of $\C$. Denote 
 $\arr{\A,\B} \subset \Fun(\Delta^1, \C)$
  the full subcategory spanned by edges $a \to b$ in $\C$ with $a$ in $\A$ and $b$ in
	$\B$. We have the $\oo$-functor $\Fib: \arr{\A,\B}\to\C$ which associated to a morphism
	its fiber.  
	We say that $(\A, \B)$ form a {\em semi-orthogonal decomposition} (SOD for short) of $\C$,
	if the functor $\Fib$ is an equivalence. This implies, in particular, that 
	 $\B = {^\perp}\A$ and $\A = \B^{\perp}$, see \cite[Prop.2.2.4]{dkss-spher}.  
	 
	 \vskip .2cm
	 
	  Let $\A\subset \C$ be  stable subcategory. We denote $\eps_\A: \A\to \C$ the embedding.
	  The subcategory $\A$   is called {\em left admissible}, if $(\A, {^\perp}\A)$
	 is an SOD for $\C$. This is equivalent to $\eps_\A$ having a left adjoint ${^*}\eps_\A$.
	  Similarly, $\A$ is called 
	 {\em right admissible}, if $(\A^\perp, \A)$ is an SOD. This is equivalent to $\eps_\A$ having a right adjoint $\eps_A^*$. 
	 See 
	 \cite[Prop. 2.3.2]{dkss-spher}.
	 
	 \vskip .2cm
	 
	 If $(\A, \B)$ is an SOD, then the diagram in $\ExFun(\C, \C)$
	 \be\label{eq:triangle-eps*}
	 \eps_\B \eps_\B^* \buildrel e\over\lra \Id_\C \buildrel c\over\lra \eps_\A ({^*}\eps_\A)
	 \ee	 
	 (with $e$ and $c$ being the unit and counit of the adjunctions), is a triangle. This can be expressed by saying
	  that each object $c\in C$
	 is included in a functorial triangle
	 \[
	  b= \eps_\B^*(c) \lra c \lra a= \, {^*}\eps_A(c), \quad a\in\A, \,\, b\in \B,
	 \]
	 where we identify objects of $\A$ and $\B$ with their images in $\C$. 
	 We will use the shift of the triangle \eqref{eq:triangle-eps*}:
	 \be\label{eq:triangle-delta}
	 \eps_\A (^*\eps_\A)[-1] \buildrel \delta_{(\A, \B)} \over\lra \eps_\B \eps^*_\B \lra \Id_\C,
	 \ee
	 representing $\Id_\C$ as the cofiber of the canonical transformation $\delta_{(\A, \B)}$ coming from
	  \eqref{eq:triangle-eps*} . 
	 	 \vskip .2cm

	 We say that $\A$ is {\em admissible}, if it is both left and right admissible. In this case the {\em left} and {\em right
	 mutation functors}
	 \be
	 \lambda_\A = ({^*}\eps_{\A^\perp})\circ \eps_{{^\perp} \A}: {^\perp}\A \lra \A^\perp, 
	 \quad \rho_\A = (\eps^*_{{^\perp}\A}) \circ \eps_{A^\perp}: \A^\perp \lra {^\perp} \A
	 \ee
	 are mutually quasi-inverse equivalences \cite[Prop. 2.4.2]{dkss-spher}. 
	 
	 	 \vskip .2cm

	 Further, $\A$ is called $\oo$-{\em admissible}, if each $\A^{\perp n}$ and ${^{\perp n}}\A$,
	 $n\geq 0$,  is
	 admissible. In this case we have an infinite chain of orthogonals conveniently 
	 indexed by $\ZZ$:
	 \be
	 \cdots,  \A^{\perp (-2)} = {^{\perp 2}}\A, \,\, \A^{\perp (-1)} = {^\perp \A}, \,\, 
	 \A^{\perp 0} = \A, \,\,  \A^{\perp 1} = \A^\perp, \,\,  \A^{\perp 2}, \cdots
	 \ee
	in which each $(\A^{\perp i}, \A^{\perp (i-1)})$ is an SOD and each $\A^{\perp i}$ identified with $\A^{\perp(i+2)}$
	by the mutation $\lambda_{\A^{\perp (i+1)}}$.    
	
	\paragraph{Gluing functors.} Let $F: \A\to \B$ be an exact functor of stable $\oo$-categories. We define the stable
	$\oo$-category $\A\oplus_F \B$  as the pullback (of simplicial sets)
\[
\begin{tikzcd}
	\A \oplus_F \B \ar[r]\ar[d] & \Fun(\Delta^1,\B) \ar[d, "\on{ev}_0"] \\
	\A \ar[r, "F"] & \B.  
\end{tikzcd}
\]
 Alternatively, we can write
 \be\label{eq:A+_FB}
 \A\oplus_F \B\, =\, S_1(F) \,=\, \Map_{\Delta^1}(\Delta^1, \Gamma(F)), 
 \ee
where $S_1(F)$  is the first level of the relative Waldhausen
S-construction of $F$, see \cite[\S 3.1C]{dkss-spher} and
$\Map_{\Delta^1}(\Delta^1, \Gamma(F))$ is the space of sections of the coCartesian Grothendieck construction.

\vskip .2cm

By definition,  objects of $\A\oplus_F \B$  are given by triples $(a,b,\eta)$ where $a \in \A$, $b \in \B$ are 
objects, and $\eta: F(a) \to b$ is a morphism in $\B$.
 We have embeddings of full $\oo$-subcategories
\be\label{eq:eps-a-eps-b}
\eps_\A: \A \lra \A\oplus_F \B, \,\,\, a\mapsto (a,0,0); \quad \eps_\B: \B \lra \A\oplus_F\B, \,\,\, b\mapsto (0,b,0)
\ee

\begin{prop}\label{prop:F=*epssb.epsa}
(a) The images of $\eps_\A$ and $\eps_\B$ (still denoted by $\A, \B \subset \A\oplus_F \B$), form an  SOD
$(\A, \B)$ of $\A\oplus_F \B$. Moreover, $\B$ is admissible in $\A\oplus_F \B$.

\vskip .2cm

(b) Conversely, let $\C$ be a stable $\oo$-category with an SOD $(\A, \B)$. If   $\B$ is  admissible , then
the functor $F= {^*}\eps_\B \circ \eps_\A[-1]: \A\to \B$ is exact and we have an equivalence $\C \= \A \oplus_F \B$,
compatible with $\eps\A$ and $\eps_\B$. 
 
\end{prop}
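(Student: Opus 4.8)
The plan is to treat the two directions separately, using throughout the concrete model of $\A\oplus_F\B$: its objects are triples $(a,b,\eta\colon F(a)\to b)$, its morphisms are the evident commuting squares, and since $F$ and $\ev_0$ preserve colimits, (co)fibers in the pullback \eqref{eq:A+_FB} are computed componentwise. For part (a) I would argue in four steps. (i) The embeddings of \eqref{eq:eps-a-eps-b} are fully faithful, since mapping spaces out of $(a,0,0)$ and $(0,b,0)$ reduce, via the pullback square, to $\Map_\A$ and $\Map_\B$ (using $F(0)\simeq0$). (ii) $(\A,\B)$ is semi-orthogonal: a morphism $(0,b,0)\to(a,0,0)$ consists of maps $0\to a$, $b\to0$ and a homotopy between two maps $0\to0$, so its mapping space is contractible. (iii) Each $x=(a,b,\eta)$ sits in a triangle $\eps_\B(b)=(0,b,0)\to x\to(a,0,0)=\eps_\A(a)$, because the componentwise cofiber of the first map is $(\Cof(0\to a),\Cof(\id_b),\ast)=(a,0,0)$. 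By the characterization of SODs recalled in \S\ref{subsec:SOD}, (i)--(iii) show $\Fib\colon\arr{\A,\B}\to\A\oplus_F\B$ is an equivalence. (iv) $\B$ is admissible: the right adjoint $\eps_\B^*$ exists automatically as the right-hand projection of the SOD $(\B^\perp,\B)=(\A,\B)$, i.e.\ $(a,b,\eta)\mapsto b$; and comparing $\Map_\B(\Cof(\eta),b')$ with $\Map_{\A\oplus_F\B}((a,b,\eta),(0,b',0))$ exhibits $(a,b,\eta)\mapsto\Cof(\eta)$ as the left adjoint ${}^*\eps_\B$. (Much of this is also packaged by the relative $S$-construction of \cite{dkss-spher}, where $\A\oplus_F\B=S_1(F)$.)

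For part (b), exactness of $F={}^*\eps_\B\circ\eps_\A[-1]$ is formal: $\eps_\A$ and $[-1]$ are exact, and ${}^*\eps_\B$, a left adjoint between stable $\oo$-categories, preserves colimits, hence finite colimits, hence is exact. To produce the equivalence $\C\simeq\A\oplus_F\B$ I would construct an $\oo$-functor $\Psi\colon\C\to\A\oplus_F\B$ by the universal property of the pullback \eqref{eq:A+_FB}: its $\A$-component is ${}^*\eps_\A$ and its $\Fun(\Delta^1,\B)$-component is the natural transformation $F\,{}^*\eps_\A=({}^*\eps_\B)\bigl(\eps_\A({}^*\eps_\A)[-1]\bigr)\xrightarrow{\,({}^*\eps_\B)(\delta_{(\A,\B)})\,}({}^*\eps_\B)(\eps_\B\eps_\B^*)\simeq\eps_\B^*$, obtained by applying ${}^*\eps_\B$ to the canonical transformation $\delta_{(\A,\B)}$ of \eqref{eq:triangle-delta} and using ${}^*\eps_\B\eps_\B\simeq\Id$; the compatibility of $\ev_0$ of this component with $F\circ{}^*\eps_\A$ is tautological. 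Then $\Psi\circ\eps_\A\simeq\eps_\A$ and $\Psi\circ\eps_\B\simeq\eps_\B$ are immediate on components, which will give the asserted compatibility with the embeddings.

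It remains to prove $\Psi$ is an equivalence, which I would split into essential surjectivity and full faithfulness. For essential surjectivity, use $\Map_\B(F(a),b)\simeq\Map_\C(\eps_\A(a)[-1],\eps_\B(b))\simeq\Map_\C(\eps_\A(a),\eps_\B(b)[1])$: given $(a,b,\eta)$, let $g_\eta$ be the corresponding map and set $c=\Fib(g_\eta)$; by uniqueness of the defining triangle of the SOD $(\A,\B)$, the rotated cofiber triangle $\eps_\B(b)\to c\to\eps_\A(a)$ must be the triangle $\eps_\B\eps_\B^*(c)\to c\to\eps_\A{}^*\eps_\A(c)$, whence ${}^*\eps_\A(c)\simeq a$, $\eps_\B^*(c)\simeq b$, and tracing the connecting map through the definition of $\delta_{(\A,\B)}$ gives $\Psi(c)\simeq(a,b,\eta)$. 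For full faithfulness, applying $\Map_\C(-,c')$ and then $\Map_\C(\eps_\A(-),-)$ and $\Map_\C(\eps_\B(-),-)$ to the two SOD triangles presents $\Map_\C(c,c')$ as an iterated fiber assembled from $\Map_\A$, $\Map_\B$ and $F$, the term $\Map_\C(\eps_\B(b),\eps_\A(a'))$ collapsing by semi-orthogonality; the same bookkeeping on the pullback side computes $\Map_{\A\oplus_F\B}$ identically, and the two match. (Alternatively: $\Psi$ is exact, carries the SOD of $\C$ to that of $\A\oplus_F\B$, and induces equivalences on both pieces, which forces it to be an equivalence.)

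The main obstacle I anticipate is the homotopy-coherence bookkeeping in part (b): upgrading the object-level correspondence $\eta\leftrightarrow g_\eta$ and the functor $\Psi$ to genuine $\oo$-functorial data, and in the essential-surjectivity step matching the connecting morphism of the reconstructed SOD triangle with the transformation $({}^*\eps_\B)(\delta_{(\A,\B)})$ that defines $\Psi$. The individual steps are routine in spirit, but organizing the homotopies is where the real work lies; the cleanest route may well be to extract these identifications from the relative Waldhausen $S$-construction formalism of \cite{dkss-spher}, in which $\A\oplus_F\B=S_1(F)$ and the SOD together with its gluing functor are built in.
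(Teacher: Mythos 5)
Your proposal is correct and takes the same route the paper itself sketches: the central step, converting the connecting morphism $\delta_c$ of the SOD triangle into the arrow $\eta\colon F({^*}\eps_\A(c))\to\eps_\B^*(c)$ via the adjunction ${^*}\eps_\B\dashv\eps_\B$ and assembling this into a functor $\Psi\colon\C\to\A\oplus_F\B$, is exactly the identification the paper's explanatory paragraph gives, with the remaining verification delegated to \cite[Prop.~2.3.3(b)]{dkss-spher} where you instead carry it out directly.
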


\noindent{\sl Proof:} This is Prop. 2.3.3(b) of \cite{dkss-spher}. 
To explain (b), note that each $c\in \C$ fits into a triangle obtained from \eqref{eq:triangle-delta}:
\[
a[-1] \buildrel \delta_c \over \lra b \lra c, \quad a = \eps_\A(^*\eps_\A)(c), \,\, b = \eps_\B\eps_\B^*(c)
\]
and 
$
\delta_c\in \Hom_\C( \eps_\A(^*\eps_\A)[-1](c), \eps_\B\eps_\B^*(c))
$
can identified, by adjunction, with an element  $\eta$ of
\[
\Hom_\B(^*\eps_\B\eps_\A [-1] (^*\eps_A(c)), \eps_\B(c)) \,\=\, \Hom_\B (F(^*\eps_\A(c)), \eps^*_\B(c))
\]
figuring in the definition of objects of $\A\oplus_F\B$. 

\qed

\vskip .1cm

 The category $\A \oplus_F \B$ together with its semi-orthogonal decomposition $(\A, \B)$
 is  known as the {\em SOD glued via $F$}. The functor $F$ is known as the {\em (coCartesian)
gluing functor}.


\subsection{$(\oo,2)$-categorical point of view: (op)lax (co)limits} \label{subsec:oplax}

\paragraph{(Op)lax (co)limits of $\oo$-categories.}\label{par:oplax-colim-cat-oo}
All $\oo$-categories, considered as simplicial sets, form, via  simplicial mapping spaces, an $\oo$-category
$\Cat_\oo$, see \cite{lurie:htt}. On the other hand, 
 the term ``$\oo$-category'' means, more precisely, an $(\infty, 1)$-category,
i.e,, a higher category in which all $p$-morphisms, $p\geq 1$, are (weakly) invertible. 
By allowing $2$-morphisms not to be weakly invertible, we get a concept of an $(\oo,2)$-category,
which can be defined, e.g., as a category enriched in $\Cat_\oo$, see \cite[\S 8.2]{CDW} and references therein,
esp.  \cite{GH}. This concept includes ``classical''  2-categories \cite{KS}. 
In particular, the $(\oo,1)$-category $\Cat_\oo$ can be refined to an $(\oo,2)$-category
$\CCat_\oo$ whose objects are $\oo$-categories, 1-morphisms are $\oo$-functors, but 2-morphisms
include non-invertible natural transformations.

\vskip .2cm

 By {\em diagram} in an $(\oo, 2)$-category $\DD$ we will mean a functor $q: C\to \DD$ from an ordinary category
 to $\DD$. Such a functor
 can potentially lead to {\em four limit constructions} (see \cite[\S 8.3]{CDW} for a summary): 
 \begin{itemize}
\item  lax limit $\laxlim (q)$ and  oplax limit $\oplaxlim (q)$; 
 \item lax colimit $\laxcolim (q) $ and
  oplax colimit $\oplaxcolim (q)$.  
\end{itemize}
As in the usual ($1$- or $(\oo,1)$-categorical) case these (co)limits are defined as vertices of universal cones
or cocones over the diagram. But in the $(\oo,2)$ case, the triangles in these (co)cones are required to be
commutative only up to a $2$-morphism for whose direction there are two possibilities. 
Such triangles, associated to a morphism $\phi: c\to c'$ in $C$,  are depicted in Fig. \ref{fig:four}.  

  \begin{figure}[h]
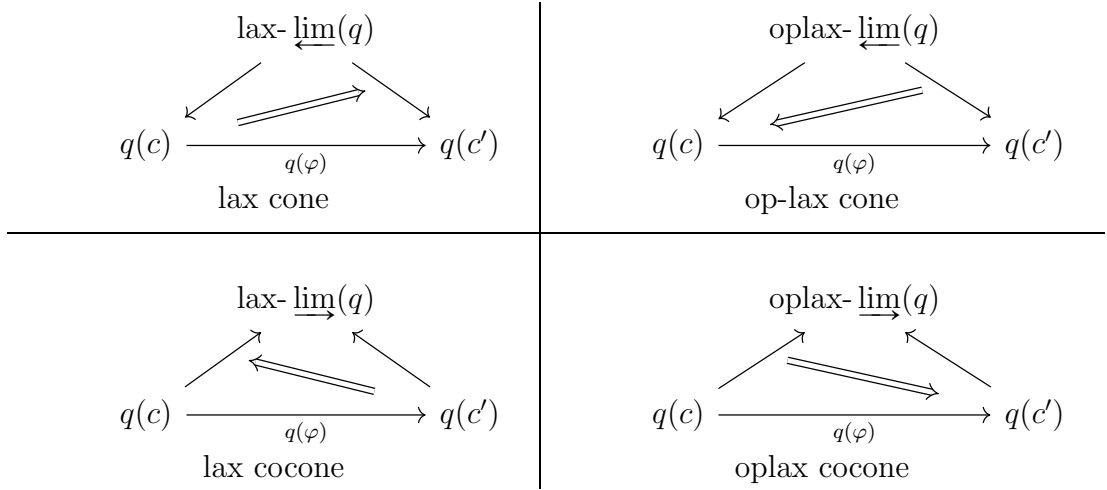

	\begin{center}
\begin{tabular}{c|c}
	\laxcone{q(c) }{\laxlim(q)}{q(c') }{}{}{q(\phi)}
	 &
    \oplaxcone[]{q(c)}{\oplaxlim(q)}{q(c')}{}{}{q(\phi)}
    \\
lax cone &    op-lax cone
	\\[1ex]\hline\\
  \laxcocone{q(c)}{\laxcolim (q)}{q(c')}{}{}{q(\phi)}
	&
    \oplaxcocone{q(c)}{\oplaxcolim(q)}{q(c')}{}{}{q(\phi)}
    \\ lax cocone & oplax cocone
\end{tabular}
\end{center}
\caption{Four  types of (co)cones and (co)limits.}\label{fig:four}
\end{figure}
We will  also use the notations $\laxlim^\DD(q)$ and $\laxlim^\DD_{c\in C} q(c)$ for $\laxlim(q)$, and similarly for other types
of (co)limits. 

\begin{ex}\label{ex:4lim-cat-oo}
  In  the case $\DD=\CCat_\oo$ the four (co)limits can be found explicitly  \cite[\S 8.3]{CDW}.
  For simplicity let us consider 
  a diagram given by an ordinary functor
 $q: C\to Set_\Delta$, 
 such that each $q(c), c\in C$, is an $\oo$-category. In this case: 
 \begin{itemize}

 \item[(1)]  
$\laxlim^{\CCat_\oo} (q) =\Map_{\C}(\C, \Gamma(q))$ is the category of sections of the covariant Grothendieck
construction $\pi: \Gamma(q)\to \C = \N(C)$. 

\item[(2)] 
 $\oplaxlim^{\CCat_\oo}(q) =\bigl(\laxlim^{\CCat_\oo} (q^\op)\bigr)^\op$, where $q^\op: C\to \Set_\Delta$
takes each $c\in C$ to $q(c)^\op$.

\item[(3)]    $\laxcolim^{\CCat_\oo} (q) = \chi(q)$ is the contravariant Grothendieck construction 
itself (not the category of sections!). 

  \item[(4)]
  $\oplaxcolim^{\CCat_\oo}(q) = \bigl(\laxcolim^{\CCat_\oo} (q^\op)\bigr)^\op$. 

\end{itemize}

\end{ex}

 \paragraph{(Op)lax (co)limits of stable $\oo$-categories and glued SODs.} 
Restricting to stable $\oo$-categories and  their {\em exact functors}, we get  an $(\oo,2)$-category $\SSt$
which is a (non-full) sub-$(\oo,2)$-category in $\CCat_\oo$, see \cite[\S 8.2]{CDW}. 
The construction of (op)lax (co)limits in $\SSt$ can be reduced to that in $\CCat_\oo$.
More precisely, the (op)lax {\em limit} of a diagram  in $\SSt$ is the same as in $\CCat_\oo$,
 (it is stable), while the (op)lax colimit in $\CCat_\oo$ of a diagram  in $\SSt$  is not necessarily
 stable, and the (op)lax colimit in $\SSt$ should be obtained from it by a kind of  ``stabiilization'' 
 procedure. 
 
 \vskip .2cm
 
 We will be interested in the case when the indexing category $C = \2$, so $q = q_F$ reduces to an exact $\oo$-functor
 $F: \A = q(0) \to\B = q(1)$ of stable $\oo$-categories. In this case the situation simplifies:

 \begin{thm}\label{thm:oo-2-add}
 All four limit constructions  for $q_F$ are canonically equivalent: 
 \[
 \laxlim (q_F)\, \=\, \laxcolim(q_F)\,  \=\, \oplaxlim (q_F)\,  \=\,\oplaxcolim(q_F) \, \= \, \A\oplus_F\B.
 \]
 
\end{thm}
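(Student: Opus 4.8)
\textbf{Proof proposal for Theorem \ref{thm:oo-2-add}.}

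The plan is to reduce everything to the explicit formulas of Example \ref{ex:4lim-cat-oo} applied to the diagram $q_F\colon \2\to\SSt$, and then observe that in this two-object case the relevant Grothendieck constructions and their section categories coincide on the nose. First I would handle the two \emph{limit} constructions. By Example \ref{ex:4lim-cat-oo}(1), $\laxlim(q_F)=\Map_{\Delta^1}(\Delta^1,\Gamma(F))$, which is exactly $\A\oplus_F\B$ by the description \eqref{eq:A+_FB}; since this limit is already stable, it is also the lax limit in $\SSt$. For the oplax limit, Example \ref{ex:4lim-cat-oo}(2) gives $\oplaxlim(q_F)=\bigl(\laxlim(q_F^\op)\bigr)^\op=\bigl(\Map_{\Delta^1}(\Delta^1,\Gamma(F^\op))\bigr)^\op$, where $F^\op\colon\A^\op\to\B^\op$. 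I would then identify $\Gamma(F^\op)^\op$ with $\chi(F)$ — this is the standard duality between the covariant Grothendieck construction of $F^\op$ and the contravariant one of $F$ (both over $\Delta^1$, using $(\Delta^1)^\op\cong\Delta^1$) — and note that taking opposites commutes with forming section spaces over $\Delta^1$. Hence $\oplaxlim(q_F)\=\Map_{\Delta^1}(\Delta^1,\chi(F))$. So the content here is the comparison $\Map_{\Delta^1}(\Delta^1,\Gamma(F))\=\Map_{\Delta^1}(\Delta^1,\chi(F))$: a section of the coCartesian fibration over $\Delta^1$ picks an object $a\in\A$, an object $b\in\B$ and a morphism $F(a)\to b$, while a section of the Cartesian fibration picks $a,b$ and a morphism $b\to F(a)$ — but wait, that is not obviously the same. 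The correct statement, which I would invoke from \cite{CDW}, is that for a single arrow the oplax limit is computed using the \emph{opposite} convention, so that its objects are again triples $(a,b,F(a)\to b)$; concretely, one uses that a section of $\chi(F)\to\Delta^1$ over the \emph{reversed} edge corresponds to a morphism $F(a)\to b$. I would spell out this low-dimensional identification of section simplicial sets directly, since both are pullbacks of path spaces of $\Gamma(F)$ resp. $\chi(F)$ along $\Delta^1$, and the explicit vertex/edge descriptions recalled in \S\ref{subsec:gen-oo-cat} make the matching of $k$-simplices transparent.

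Next I would treat the two \emph{colimit} constructions, where the subtlety flagged in \S\ref{subsec:oplax} genuinely enters: the (op)lax colimit in $\CCat_\oo$ of a diagram in $\SSt$ need not be stable. By Example \ref{ex:4lim-cat-oo}(3), $\laxcolim^{\CCat_\oo}(q_F)=\chi(F)$, and this is a stable $\oo$-category precisely because the diagram has only two objects and one nondegenerate arrow: $\chi(F)$ is built from the two stable categories $\A,\B$ glued along the exact functor $F$, so one checks it is closed under fibers and cofibers. Hence no stabilization is needed and $\laxcolim^{\SSt}(q_F)=\chi(F)$. Now $\chi(F)$ has objects: $[a]$ for $a\in\A$, $[b]$ for $b\in\B$, and new edges $[b]\to[a]$ over morphisms $b\to F(a)$. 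I would produce an equivalence $\chi(F)\to\A\oplus_F\B$ sending an object of $\chi(F)$ — which after the usual coCartesian/Cartesian straightening is a triple $(a,b,b\to F(a))$ — to... here the directions are again opposite, so the honest move is: $\chi(F)$ is equivalent to the pullback $\A\times_\B\Fun(\Delta^1,\B)$ along $\ev_1$ rather than $\ev_0$, and one checks $\A\times_{\B,\ev_1}\Fun(\Delta^1,\B)\=\A\times_{\B,\ev_0}\Fun(\Delta^1,\B)=\A\oplus_F\B$ via the "reverse the arrow" equivalence $\Fun(\Delta^1,\B)\=\Fun(\Delta^1,\B)$ combined with $F(a)=F(a)$; more robustly, both $\chi(F)$ and $\A\oplus_F\B$ corepresent, in $\SSt$, the functor sending a stable $\Dc$ to the $\oo$-category of pairs of exact functors $\Dc\to\A$, $\Dc\to\B$ together with a natural transformation between their composites with $F$, so they are equivalent by Yoneda. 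I would present the Yoneda argument as the main line, since it simultaneously handles $\laxcolim$ and $\oplaxcolim$ (the latter being $(\laxcolim(q_F^\op))^\op$, and $q_F^\op$ again a single exact arrow, so $\oplaxcolim^{\SSt}(q_F)=\chi(F^\op)^\op\=\A\oplus_F\B$ by the same corepresentability). One should double-check that the oplax colimit in $\SSt$ really equals the $\CCat_\oo$-colimit in this case, i.e. that $\chi(F^\op)^\op$ is already stable — true for the same two-object reason.

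The cleanest organization, then, is: (i) recall from Example \ref{ex:4lim-cat-oo} the four explicit formulas; (ii) observe $\laxlim(q_F)\=\A\oplus_F\B$ immediately from \eqref{eq:A+_FB}; (iii) establish the single "mapping-cotype" identity that for a one-arrow diagram the oplax/lax section-and-Grothendieck-constructions agree, using the low-level descriptions of $\Gamma(F)$ and $\chi(F)$ in \S\ref{subsec:gen-oo-cat} together with $(\Delta^1)^\op\cong\Delta^1$; (iv) check stability of $\chi(F)$ and $\chi(F^\op)^\op$ so that the $\SSt$-colimits coincide with the $\CCat_\oo$-ones of Example \ref{ex:4lim-cat-oo}(3),(4); (v) conclude via the common universal property (corepresentability in $\SSt$ of $\Dc\mapsto\{\text{exact }u\colon\Dc\to\A,\ v\colon\Dc\to\B,\ Fu\Rightarrow v\}$) that all four agree with $\A\oplus_F\B$. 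I expect the main obstacle to be step (iii)–(iv): carefully matching the \emph{directions} of the gluing edges across the covariant versus contravariant Grothendieck constructions and the op-dualities, and verifying that no genuine stabilization is triggered on the colimit side — everything else is a formal consequence of the already-recalled material from \cite{CDW} and \cite{dkss-spher}. Once the direction-bookkeeping is pinned down, the equivalences are canonical and compatible with $\eps_\A,\eps_\B$, which can be recorded as a final remark.
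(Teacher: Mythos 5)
Your proposal attempts a self-contained verification, whereas the paper simply cites \cite[Th.~2.4.2]{CDW} and records the oplax cocone and lax cone for later use. That ambition is reasonable, but two of your steps contain genuine errors that undermine the argument.

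First, your claim in step~(iv) that $\chi(F)$ is already stable (``precisely because the diagram has only two objects and one nondegenerate arrow'') is false. By the description of $\chi(F)$ recalled in \S\ref{subsec:gen-oo-cat}, $\Map_{\chi(F)}([a],[b]) = \emptyset$ for all $a\in\A$, $b\in\B$, while $\Map_{\chi(F)}([b],[a]) \simeq \Map_\B(b, F(a))$. Consequently $[0_\B]$ is an initial object and $[0_\A]$ is a terminal object, but they are not equivalent: $\Map_{\chi(F)}([0_\A],[0_\B]) = \emptyset$. So $\chi(F)$ is not even pointed, let alone stable, and the stabilization step flagged in \S\ref{subsec:oplax} is unavoidable. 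Your Yoneda/corepresentability fallback is then also stated incorrectly: $\chi(F)\notin\SSt$, so it cannot corepresent a functor on $\SSt$; what is true is that the $\SSt$-colimit is characterized by corepresenting $\Dc\mapsto\Map_{\CCat_\oo}(\chi(F),\Dc)$, and one must actually verify that $\A\oplus_F\B$ satisfies this universal property — which is precisely the nontrivial content you are trying to avoid.

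Second, the lax/oplax limit comparison in step~(iii) is not ``the same on the nose after tracking op-conventions.'' You correctly observe that sections of $\Gamma(F)\to\Delta^1$ are triples $(a,b,F(a)\to b)$ while sections of $\chi(F)$ are triples $(a,b,b\to F(a))$, and you flag ``but wait, that is not obviously the same.'' That hesitation is exactly right: for general $\oo$-categories these two section categories are genuinely different. The equivalence in the stable case is not a relabeling but a substantive functor built from (co)fiber and shift operations — you can see the telltale shift $a\mapsto\eps_\A(a)[-1]$ in the oplax cocone \eqref{eq:oplaxcocone}, which is precisely the categorical residue of the sign $-1$ in \eqref{eq:CF-matrix}. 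Dismissing this as a convention, rather than constructing the equivalence (e.g.\ $(a,b,\eta:F(a)\to b)\mapsto(a[-1],\Fib(\eta),\Fib(\eta)\to F(a[-1])[1]\cdots)$ or a careful rotation of the defining triangle), leaves the key step of the theorem unproved. Until both of these points are repaired, the argument does not go through, and deferring to \cite{CDW} as the paper does would be the safer route.
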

 
 \noindent{\sl Proof:} This is \cite[Th. 2.4.2]{CDW}. 
 For future reference, we indicate the two of these identifications. 
 First, we exhibit $\A \oplus_F \B$ as the oplax colimit of $q_F$ via the cocone
\begin{equation}\label{eq:oplaxcocone}
		\begin{tikzcd}
			|[alias=S]|\B \ar[dr,"{b\mapsto (0,b, 0=F(0)\to b) = \eps_\B(b)}"] & \\
			&  \A \oplus_F \B\\
			\A \ar[uu, "F"] \ar[ur, ""{name=T}, swap, "{a\mapsto (a[-1], 0, F(a[-1]\to0) = \eps_\A(a)[-1]}"]  &
			\arrow[from=T, to=S, shorten <=10pt, shorten >=10pt, Rightarrow]
		\end{tikzcd}
\end{equation}
Second, we exhibit $\A \oplus_F \B$ as the lax limit of $q_F$ via the cone
\begin{equation}\label{eq:laxcone}
	\begin{tikzcd}
		& \B\\
		\A \oplus_F \B\ar[ur, 
		"{c=(a,b, F(a)\to b)\mapsto b = \eps_\B^*(c)}", ""{name=V}]\ar[dr, swap, 
		"{c=(a,b, F(a)\to b)\mapsto a = ({^*}\eps_\A)(c)}"] & \\
		&  \A \ar[uu, swap,"F"]\ar[to=V,shorten
		<=20pt, shorten >=10pt, Rightarrow].
	\end{tikzcd}
\end{equation}
Here $\eps_\A, \eps_\B$ are the embeddings from \eqref{eq:eps-a-eps-b}. 
\qed

\begin{rem}Theorem \ref{thm:oo-2-add}
 was interpreted in \cite{CDW} as the property of
{\em lax additivity} of $\SSt$. To explain this, recall that additivity at various categorical
levels can be expressed by saying that certain limits can also be seen as colimits and vice versa. 

\vskip .1cm

At the level  of an ordinary category, (semi-)additivity 
 means that finite products coincide with finite coproducts: $x\times y \= x\sqcup y =: x\oplus y$.

\vskip .1cm
 
At the level of an individual  $\oo$-category,  being stable can also be seen as a kind
of additivity property, but now with the ``sum'' (or, rather difference) of two objects $x$ and
$y$ depending on a choice of a morphism $f: x\to y$.  It can be defined in two ways:  either
as the fiber (limit) or as the cofiber (colimit) which are identified via \eqref{eq:fib=cof[-1]}. 

\vskip .1cm

At the next level of the   $(\oo,2)$-category $\SSt$, 
 additivity appears  as Theorem \ref{thm:oo-2-add}
with $\A\oplus_F\B$ playing the role of the direct sum of two stable $\oo$-categories $\A$ and $\B$
along $F$. 
\end{rem}


\subsection {Lax matrices}\label{subsec:laxmat}

\paragraph{Functors between glued SODs and lax matrices.} We specialize the construction of
\cite[\S 8.4]{CDW} to the situation of interest for us.

Let $F: \A_0\to\A_1$,  $G: \B_0\to\B_1$ be exact functors of stable $\oo$-categories and 
\[
	\phi: \A_0 \oplus_F \A_1 \lra \B_0 \oplus_G \B_1 
\]
 be an exact functor. By Theorem  \ref{thm:oo-2-add} we can write
 \[
  \A_0 \oplus_F \A_1  = \oplaxcolim_{i\in\2} \A_i, \quad  \B_0 \oplus_G \B_1 = \laxlim_{j\in\2} \B_j
 \]
 and use the corresponding (co)cones \eqref{eq:laxcone} and \eqref{eq:oplaxcocone} to construct the diagram
\begin{equation}\label{eq:butterfly}
	\begin{tikzcd}
		|[alias=S]| \A_1 \ar[dr]&  &  & \B_1\\
		& \A_0 \oplus_F \A_1 \ar[r, "\phi"] & \B_0 \oplus_G \B_1 \ar[ur, ""{name=V}]\ar[dr] & \\
		\A_0 \ar[uu, "F"] \ar[ur, ""{name=T}] &  &  & \B_0 \ar[uu, swap,"G"]\ar[to=V,shorten
		<=20pt, shorten >=10pt, Rightarrow].
		\arrow[from=T, to=S, shorten <=10pt, shorten >=10pt, Rightarrow]
	\end{tikzcd}
\end{equation}
 This allows us to represent $\phi$ as a kind of matrix consisting of 
  functors $\phi_{ij}: \A_i\to B_j$ together  with extra data. More precisely, we define the {\em stable $\oo$-category
  of lax matrices} of format $F\times G$ to be
  \be
  \LMat(F,G) \,=\, \oplaxlim^{\SSt}_{i\in\2}\,  \laxlim^{\SSt}_{j\in \2}\,  \ExFun(\A_i, \B_j). 
  \ee
  Theorem  \ref{thm:oo-2-add} implies: 
   \begin{prop}
  We have an equivalence of stable $\oo$-categories 
  \[
  \ExFun(\A_0\oplus_F \A_1, \B_0\oplus_G \B_1) 
  \= \LMat(F,G). \quad \qed
  \]
  \end{prop}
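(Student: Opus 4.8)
The plan is to unwind both sides of the claimed equivalence using Theorem~\ref{thm:oo-2-add} and the fact that $\ExFun(-,-)$ converts (co)limits in the source and limits in the target into limits of functor categories. First I would recall that, by Theorem~\ref{thm:oo-2-add}, the source category decomposes as an oplax colimit, $\A_0 \oplus_F \A_1 \= \oplaxcolim_{i \in \2} \A_i$, and the target decomposes as a lax limit, $\B_0 \oplus_G \B_1 \= \laxlim_{j \in \2} \B_j$. The key formal input is the standard adjunction/continuity property of exact functor categories: $\ExFun(-, \D)$ sends colimits (in the appropriate $(\oo,2)$-sense, i.e.\ oplax colimits) in the first variable to the corresponding lax limits, and $\ExFun(\C, -)$ sends lax limits in the second variable to lax limits. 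Concretely, $\ExFun(\oplaxcolim_{i} \A_i, \D) \= \oplaxlim_{i} \ExFun(\A_i, \D)$ and $\ExFun(\C, \laxlim_{j} \B_j) \= \laxlim_{j} \ExFun(\C, \B_j)$, all computed in $\SSt$.

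Given these two reductions, the proof assembles as follows. Apply the target-continuity property first:
\[
\ExFun\bigl(\A_0\oplus_F\A_1,\ \B_0\oplus_G\B_1\bigr) \;\=\; \laxlim^{\SSt}_{j\in\2}\ \ExFun\bigl(\A_0\oplus_F\A_1,\ \B_j\bigr).
\]
Then, inside each term, apply the source-continuity property, using the oplax-colimit presentation of $\A_0\oplus_F\A_1$:
\[
\ExFun\bigl(\A_0\oplus_F\A_1,\ \B_j\bigr) \;\=\; \oplaxlim^{\SSt}_{i\in\2}\ \ExFun(\A_i,\ \B_j).
\]
Combining these and using that $\laxlim$ and $\oplaxlim$ over the finite indexing category $\2$ commute with one another (a Fubini-type statement for $(\oo,2)$-limits, which holds because both are honest limits computed levelwise on the relevant Grothendieck constructions) yields
\[
\ExFun\bigl(\A_0\oplus_F\A_1,\ \B_0\oplus_G\B_1\bigr) \;\=\; \oplaxlim^{\SSt}_{i\in\2}\ \laxlim^{\SSt}_{j\in\2}\ \ExFun(\A_i,\B_j) \;=\; \LMat(F,G),
\]
which is exactly the definition of $\LMat(F,G)$. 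One must also check that all colimits appearing are preserved by the exact functors in question (so that the oplax colimit presentation is the ``right'' one for mapping out), but this is built into the meaning of $\ExFun$ and into Theorem~\ref{thm:oo-2-add}.

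I expect the main obstacle to be the precise formulation and justification of the two continuity properties in the $(\oo,2)$-categorical setting — specifically, that mapping \emph{out} of an oplax colimit produces an oplax limit of mapping categories (the variance is easy to get backwards), and that one is entitled to use the oplax-colimit presentation rather than, say, the lax-colimit one. Here the self-duality statement of Theorem~\ref{thm:oo-2-add}, which identifies all four (co)limit constructions for $q_F$, is what makes the argument robust: whichever presentation is convenient on each side can be used, and the ambiguity collapses. The cleanest route, and the one I would actually write, is simply to cite the relevant structural results of \cite{CDW} (this is stated there as part of the $\LMat$ formalism in \S 8.4) and present the above as the one-line consequence of Theorem~\ref{thm:oo-2-add} together with the definition of $\LMat(F,G)$; the Fubini interchange of $\laxlim$ and $\oplaxlim$ over $\2$ is the only genuinely new bookkeeping, and it reduces to a direct comparison of sections of the iterated Grothendieck constructions.
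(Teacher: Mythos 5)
The paper contains no explicit argument: the proposition is stated as an immediate corollary of Theorem~\ref{thm:oo-2-add} with a bare $\qed$, the substance of the required continuity properties being deferred to \cite{CDW}. Your fleshing-out takes the right high-level route --- present the source as an oplax colimit and the target as a lax limit via Theorem~\ref{thm:oo-2-add}, then invoke the compatibility of $\ExFun$ with $(\oo,2)$-categorical (co)limits and match the definition of $\LMat(F,G)$.

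Two issues with the execution. First, you introduce an unnecessary step. By applying the target-side reduction first and the source-side reduction second, you land on $\laxlim_j \oplaxlim_i \ExFun(\A_i,\B_j)$, which forces an appeal to a Fubini interchange to reach $\LMat(F,G) = \oplaxlim_i \laxlim_j \ExFun(\A_i,\B_j)$. If instead you apply the source-side reduction first, $\ExFun(\A_0\oplus_F\A_1, \D) \= \oplaxlim_i \ExFun(\A_i, \D)$ with $\D = \B_0\oplus_G\B_1$, and then the target-side reduction inside each factor, you land on the definition directly, with no interchange to justify. Second, your variance bookkeeping is internally inconsistent: the prose claims that mapping out of an oplax colimit yields ``the corresponding lax limits,'' yet your displayed formula --- correctly, as needed to match the definition of $\LMat$ --- has $\oplaxlim$. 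The identity $\ExFun(\oplaxcolim_i \A_i, \D) \= \oplaxlim_i \ExFun(\A_i, \D)$ is precisely the structural fact doing the work here, and getting its variance right is not cosmetic: once Theorem~\ref{thm:oo-2-add} is in place, it \emph{is} the content of the proposition. Either cite it precisely from \cite{CDW}, or verify it directly by identifying both sides with sections of the appropriate Grothendieck construction over $\2$.
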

  
  Objects of  $\LMat(F,G)$  can be identified explicitly using parts (1) and (2) of  Example \ref{ex:4lim-cat-oo} in terms of the Grothendieck constructions of $q_F, q_G: \2\to\SSt$. This gives a system of data (called a {\em lax matrix})
\begin{equation}\label{eq:laxmatrix}
		\left(
		\begin{tikzcd}
			\phi_{00} \ar[r]\ar[d] & \ar[d]\phi_{01}\\
			\phi_{10} \ar[r] & \phi_{11}
		\end{tikzcd}
		\right)
\end{equation}
where $\phi_{ij}: \A_i\to\B_j$ are exact functors 
  given by post- and precomposing $\phi: \A_0\oplus_F \A_1\to \B_0\oplus_G \B_1$ with the
arrows from the lax (co)cones.  The arrows in the matrix signify natural transformations
\begin{align}\label{eq:naturalmatrix}
	\begin{split}
	\phi_{00} \Rightarrow    \phi_{10} \circ F, \quad 
	  \phi_{01} \Rightarrow   \phi_{11}  \circ F \\
	G\circ  \phi_{00} \Rightarrow \phi_{01}, \quad 
	G \circ \phi_{10} \Rightarrow \phi_{11}\\
	\end{split}
\end{align}
such that the square 
\begin{equation}\label{eq:coherentsquare}
	\begin{tikzcd}
		G \circ \phi_{00}\ar[d, Rightarrow] \ar[r, Rightarrow] & G \circ \phi_{10} \circ F \ar[d,
		Rightarrow]\\
		\phi_{01} \ar[r, Rightarrow] & \phi_{11} \circ F
	\end{tikzcd}
\end{equation}
in $\Fun(\A_0, \B_1)$ commutes (coherently).

\begin{rems}
(a) One can visualize the $4$ categories $\A_0, \A_1, \B_0, \B_1$ as the vertices of a tetrahedron
and the $6$ functors $F, G, \phi_{ij}$ as its edges. Then the $4$ natural transformations \eqref{eq:naturalmatrix}
fill the $2$-faces  and   \eqref{eq:coherentsquare} is interpreted
as $2$-dimensional commutativity of the tetrahedron. 

\vskip .2cm

(b) Sometimes we will denote a lax matrix simply by $\|\phi_{ij}\|$, understanding implicitly that the other data
are given. 
\end{rems}

Consider the instance of the triangle \eqref{eq:triangle-delta} for the SOD $(\B_0, \B_1)$:
\be\label{eq:triangle-delta-B}
\xymatrix{
\eps_{\B_0} (^*\eps_{\B_0}) [-1] \ar[rr]^{\quad \quad \delta_\B:= \delta_{(\B_0, \B_1)}}  &&   \eps_{\B_1}\eps^*_{\B_1} \ar[r] & \Id
}. 
\ee

\begin{prop}\label{prop:lax-matrix-expl}
(a) The matrix elements $\phi_{ij}$ associated to $\phi$ have the form:
\[
\begin{pmatrix}
\phi_{00}= \, {^*}\eps_{\B_0}\circ  \phi\circ  \eps_{\A_0} [-1],  & \phi_{01} = \eps_{\B_1}^* \circ \phi \circ \eps_{\A_0}[-1]
\\
\phi_{10} = \, {^*}\eps_{\B_0}\circ  \phi \circ \eps_{\A_1}, & \phi_{11} = \eps_{\B_1}^* \circ \phi \circ \eps_{\A_1}
\end{pmatrix}. 
 \]
 (b) In this identification, the arrows \eqref{eq:naturalmatrix} have the following form: the first two,
 \[
 \begin{gathered} 
 \phi_{00}  = {^*{\eps}}_{\B_0} \circ \phi\circ \eps_{\A_0}[-1]\buildrel e_*\over \longrightarrow {^*} \eps_{\B_0} \circ \phi \circ
 \eps_{\A_1} \circ (^*\eps_{\A_1}) \circ \eps_{\A_0}[-1] \,= \, \phi_{10}\circ F, \\
 \phi_{01} = \eps^*_{\B_1}\circ \phi \circ \eps_{\A_0}[-1] \buildrel e_* \over \longrightarrow \eps^*_{\B_1} \circ\phi\circ \eps_{\A_1}
 \circ {^*\eps}_{\A_1} \circ\eps_{\A_0}[-1]\, =\,  \phi_{11}\circ F
 \end{gathered} 
\]
 are induced by the unit $e: \Id\to\eps_{\A_1}\circ {^*\eps}_{\A_1}$, while the other two 
 are  compositions
  \[
  \begin{gathered}
  G\circ \phi_{00} \,=\, {^*\eps}_{\B_1} \circ \eps_{\B_0} \circ {^*\eps}_{\B_0} \circ \phi \circ \eps_{\A_0}[-2] 
  \buildrel \delta_*\over\lra {^*\eps}_{\B_1}\circ \eps_{\B_1}\circ  \eps^*_{\B_1} [-1] 
  \buildrel c_* \over \lra \eps_{\B_1}^*\circ\phi\circ \eps_{\A_0}[-1] \,=\,\phi_{01},
  \\
  G\circ\phi_{10} \,=\, {^*\eps}_{\B_1}\circ  \eps_{\B_0} \circ {^*\eps}_{\B_0} \circ\phi\circ \eps_{\A_1}[-1] 
  \buildrel \delta_*\over\lra  {^*\eps}_{\B_1}\circ\eps_{B_1} \circ \eps^*_{\B_1} \circ\phi\circ \eps_{\A_1}
  \buildrel c_* \over\lra \eps_{\B_1}\circ \phi\circ \eps_{\A_1}\,=\,\phi_{11}
  \end{gathered} 
   \]
   where $\delta_*$ is induced by $\delta_\B$ and $c_*$ by the counit ${^*\eps}_{\B_1}\circ  \eps_{\B_1}\to\Id$. 
\end{prop}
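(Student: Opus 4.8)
The plan is to unwind the identifications in Theorem~\ref{thm:oo-2-add}, tracing the universal (co)cones \eqref{eq:laxcone} and \eqref{eq:oplaxcocone} through the definition of $\LMat(F,G)$ and reading off what the structure maps become. Concretely, recall from Example~\ref{ex:4lim-cat-oo}(1) that $\laxlim^{\CCat_\oo}$ is the category of sections of a covariant Grothendieck construction, so the projection onto the $j$th factor of $\laxlim_{j\in\2}\ExFun(\A_i,\B_j)$ is evaluation of a section at the vertex $j\in\2$; precomposing a section coming from $\phi$ with the lax cone \eqref{eq:laxcone} identifies the value at $j=1$ with $\eps_{\B_1}^*\circ\phi\circ(-)$ and the value at $j=0$ with $({}^*\eps_{\B_0})\circ\phi\circ(-)[-1]$, where the shift enters through the description of $\eps_\A$ in \eqref{eq:oplaxcocone} as $a\mapsto\eps_\A(a)[-1]$. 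Dually, the oplax colimit cocone \eqref{eq:oplaxcocone} realizes the two inputs $a\in\A_0$ and $b\in\A_1$ via $\eps_{\A_0}[-1]$ and $\eps_{\A_1}$ respectively. Composing these four identifications gives exactly the four formulas in part~(a).

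For part~(b) the plan is to identify each natural transformation in \eqref{eq:naturalmatrix} with the $2$-morphism built into the relevant universal (co)cone. The two transformations $\phi_{00}\Rightarrow\phi_{10}\circ F$ and $\phi_{01}\Rightarrow\phi_{11}\circ F$ arise from the $2$-cell of the oplax cocone \eqref{eq:oplaxcocone}; under the dictionary between that cocone and the SOD $(\A_0,\A_1)$ of $\A_0\oplus_F\A_1$, this $2$-cell is precisely the unit $e\colon\Id\to\eps_{\A_1}\circ({}^*\eps_{\A_1})$ (this is how the triangle \eqref{eq:triangle-delta} presents $\Id$), so postcomposing with $\eps_{\B_0}^*\circ\phi\circ(-)$ resp.\ $\eps_{\B_1}^*\circ\phi\circ(-)$ yields the stated maps $e_*$. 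Symmetrically, the two transformations $G\circ\phi_{00}\Rightarrow\phi_{01}$ and $G\circ\phi_{10}\Rightarrow\phi_{11}$ come from the $2$-cell of the lax cone \eqref{eq:laxcone} for $\B_0\oplus_G\B_1$; translating that cell back through the SOD $(\B_0,\B_1)$, one sees it factors as the map $\delta_\B$ of \eqref{eq:triangle-delta-B} followed by the counit ${}^*\eps_{\B_1}\circ\eps_{\B_1}\to\Id$, because $G={}^*\eps_{\B_1}\circ\eps_{\B_0}\circ({}^*\eps_{\B_0})[-1]$ by Proposition~\ref{prop:F=*epssb.epsa}(b). Precomposing with $\phi\circ\eps_{\A_0}[-1]$ resp.\ $\phi\circ\eps_{\A_1}$ and doing the shift bookkeeping gives the displayed composites $\delta_*$ followed by $c_*$.

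The main obstacle, I expect, is the shift bookkeeping and the verification that the four $2$-cells assemble coherently---i.e.\ that the square \eqref{eq:coherentsquare} commutes---rather than any single identification. The shifts by $[-1]$ and $[-2]$ in the formulas come from the asymmetric way $\eps_\A$ appears in the oplax colimit presentation (with a $[-1]$) versus how $\eps_\B^*$ and ${}^*\eps_\B$ appear in the lax limit presentation (the latter also with a $[-1]$, inherited from the definition $F={}^*\eps_\B\circ\eps_\A[-1]$), and one must check these are consistently placed so that, e.g., $G\circ\phi_{00}$ carries a total shift $[-2]$ while $G\circ\phi_{10}$ carries $[-1]$, matching the target $\phi_{01}$ (shift $[-1]$) and $\phi_{11}$ (shift $0$). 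The coherence of \eqref{eq:coherentsquare} is not something to re-prove here: it is part of the data of an object of the iterated oplax/lax limit $\LMat(F,G)$ and is guaranteed by Theorem~\ref{thm:oo-2-add} together with the explicit Grothendieck-construction description of $\laxlim$ and $\oplaxlim$ in Example~\ref{ex:4lim-cat-oo}; one only needs to observe that the square built from the maps $e_*$ and $\delta_*,c_*$ as above \emph{is} the image of that structural square under the equivalences, which is immediate once the individual identifications of part~(b) are in place. So the proof is essentially a matter of carefully composing the canonical (co)cones and citing \cite[\S 8.4]{CDW} for the fact that this is how $\phi$ is recovered from its lax matrix.
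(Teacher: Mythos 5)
Your approach matches the paper's own (very terse) proof: the paper simply says one reads off the arrows from the oplax cocone \eqref{eq:oplaxcocone} and lax cone \eqref{eq:laxcone} in the butterfly diagram \eqref{eq:butterfly}, together with the identifications $F = {^*\eps}_{\A_1}\circ\eps_{\A_0}[-1]$ and $G = {^*\eps}_{\B_1}\circ\eps_{\B_0}[-1]$ from Proposition~\ref{prop:F=*epssb.epsa}(b). You do exactly this, with more detail spelled out, which is fine.

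However, there are a few bookkeeping slips that would bite you if you were to write this out carefully. First, in the opening paragraph you attribute the shift $[-1]$ to the $j=0$ projection, writing ``the value at $j=0$ with $({}^*\eps_{\B_0})\circ\phi\circ(-)[-1]$''. That is wrong: the lax cone \eqref{eq:laxcone} has no shift at all; both ${^*\eps}_{\B_0}$ and $\eps_{\B_1}^*$ appear unshifted. The sole source of the $[-1]$ is the $i=0$ leg $\eps_{\A_0}[-1]$ of the oplax cocone \eqref{eq:oplaxcocone}, which you correctly state in the very next sentence. As written, composing your two descriptions would assign $\phi_{00}$ a $[-2]$ and $\phi_{10}$ a $[-1]$, contradicting the statement. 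Second, in the discussion of part (b) you write the postcompositions as ``$\eps_{\B_0}^*\circ\phi\circ(-)$'' where the paper (and the correct formula) have the left adjoint ${^*\eps}_{\B_0}$; the distinction between $\eps^*$ and ${^*\eps}$ matters throughout. Third, you write $G={}^*\eps_{\B_1}\circ\eps_{\B_0}\circ({}^*\eps_{\B_0})[-1]$; the correct identification from Proposition~\ref{prop:F=*epssb.epsa}(b) is $G={^*\eps}_{\B_1}\circ\eps_{\B_0}[-1]$ (the extra ${^*\eps}_{\B_0}$ only appears once you precompose with $\phi_{00}$ or $\phi_{10}$, and it is precisely what produces the composite $\eps_{\B_0}\circ{^*\eps}_{\B_0}$ to which $\delta_\B$ applies). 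These are all slips of the pen rather than conceptual gaps — your later shift bookkeeping for $G\circ\phi_{00}$ (total $[-2]$) and $G\circ\phi_{10}$ (total $[-1]$) is correct — but they should be fixed, since they obscure the key point that the unique $[-1]$ in the matrix elements comes from the oplax cocone's $\eps_{\A_0}$-leg, and the $[-1]$ appearing in $G$ comes independently from the identification of the gluing functor.
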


\noindent{\sl Proof:} It follows by
recalling from \eqref{eq:oplaxcocone} \eqref{eq:laxcone}
the nature of the arrows in the triangles of \eqref{eq:butterfly}, 
as well as  the identifications $F= \, ^*\eps_\B \circ \eps_A[-1]$, $G= \, ^*\eps_C\circ  \eps_B[-1]$ from
Proposition \ref{prop:F=*epssb.epsa}(b). 

\qed

Note the shift by $-1$ in the first row of the matrix. It is the consequence of the fact that 
we interpret the source of $\phi$
	as an oplax colimit while the target is interpreted differently,  as a lax limit. This convention
	has the advantage of simplifying the formulas for composition of lax matrices (below).  

\begin{exa}
	The identity functor
	$
		\id : \A_0 \oplus_F \A_1 \lra \A_0 \oplus_F \A_1 
	$
	is represented by the lax matrix
	\[
		\left(
		\begin{tikzcd}
			{[-1]}_{\A_0} \ar[r]\ar[d] & \ar[d] 0 \\
			0 \ar[r] & \id_{\A_1}.
		\end{tikzcd}
		\right)
	\]
		The square \eqref{eq:coherentsquare} has the form
	\[
		\begin{tikzcd}
			F [-1]_{\A_0} \ar[r]\ar[d] & \ar[d]0 \\
			0  \ar[r] & F 
		\end{tikzcd}
	\]
	  Its coherence data are those of a 
  biCartesian square  in $\Fun(\A_0,\A_1)$.  
\end{exa}

\paragraph{Composition of two lax matrices.} Let $F: \A_0\to \A_1$, $G: \B_0\to\B_1$, $H: \C_0\to\C_1$
be three exact functors of stable $\oo$-categories, and 
\begin{align*}
		 \A= \A_0 \oplus_F \A_1\buildrel \phi\over  \lra  \B= \B_0 \oplus_G \B_1 \buildrel\psi\over\lra
		 \C= \C_0 \oplus_H \C_1 
	\end{align*}
be two exact functors between their lax sums, represented by lax matrices
$\|\phi_{ij}\|$ and
	$\|\psi_{ij}\|$ as in \eqref{eq:laxmatrix}.

\begin{prop}\label{prop:comp-2-laxmat}
	 The lax matrix corresponding to the
	composite $\psi \circ \phi$ is given by the {\em lax matrix product}
	\begin{equation*}
		\left(
		\begin{tikzcd}
			\psi_{00} \ar[r]\ar[d] & \ar[d]\psi_{01}\\
			\psi_{10} \ar[r] & \psi_{11}
		\end{tikzcd}
		\right)
		\left(
		\begin{tikzcd}
			\phi_{00} \ar[r]\ar[d] & \ar[d]\phi_{01}\\
			\phi_{10} \ar[r] & \phi_{11}
		\end{tikzcd}
		\right)
	=
	\left(
		\begin{tikzcd}
			(\psi\phi)_{00} \ar[r]\ar[d] & \ar[d] (\psi\phi)_{01}\\
			(\psi\phi)_{10} \ar[r] & (\psi\phi)_{11}
		\end{tikzcd}
		\right),
	\end{equation*}
	with 
	\[
	(\psi\phi)_{ij} = \Cof \bigl\{ u_{ij}: \psi_{0j}\circ \phi_{i0} \lra \psi_{1j}\circ\phi_{i1}\bigr\}
	\]
	where the morphism $u_{ij}$ can be defined in one of the two equivalent ways:
 
	 (i) As the composite
	\[
		\psi_{0j} \circ \phi_{i0} \lra
		\psi_{1j} \circ G \circ \phi_{i0}\lra \psi_{1j} \circ \phi_{i1} 
	\]
	obtained via the natural tranformations from \eqref{eq:naturalmatrix}.
	
 (ii) As the $(i,j)$th of the morphisms
	\[
	\begin{gathered}
	\psi_{00}\circ \phi_{00}\, = \, {^*\eps}_{\C_0} \circ \psi \circ \eps_{\B_0}\circ  {^*\eps}_{\B_0}
	\circ  \phi \circ \eps_{\A_0}[-2]
	\lra {^*\eps}_{\C_0}\circ \psi \circ  \eps_{\B_1}\circ  \eps_{\B_1}^*\circ  \phi \circ \eps_{\A_0}[-1] \,=\,
	  \psi_{10}\circ \phi_{01},
	\\
	\psi_{01}\circ \phi_{00} \,=\, \eps_{\C_1}^*\circ \psi\circ \eps_{\B_0}
	\circ {^*\eps}_{\B_0} \circ\phi\circ \eps_{\A_0}[-2]\lra \eps_{\C_1}^*\circ\psi\circ\eps_{\B_1}\circ\eps_{\B_1}^* \circ\phi\circ\eps_{\A_0}[-1]\,=\, \psi_{11}\circ \phi_{01},
	\\
	\psi_{00}\circ \phi_{10}\,=\, {^*\eps}_{\C_0}\circ\psi\circ\eps_{\B_0}\circ {^*\eps}_{\B_0} \circ\phi\circ \eps_{\A_1}[-1]
	\lra {^*\eps}_{C_0}\circ\psi\circ\eps_{|B_1}\circ\eps_{\B_1}^*\circ\phi\circ\eps_{\A_1}
	\,=\, \psi_{10}\circ\phi_{11},
	\\
	\psi_{01}\circ\phi_{10} \,=\, \eps^*_{\C_1}\circ\psi\circ\eps_{\B_0}\circ{^*\eps}_{\B_0} \circ\phi\circ\eps_{\A_1}[-1]
	\lra \eps_{\C_1}^* \circ\psi\circ\eps_{\B_1}\circ\eps_{\B_1}^* \circ\phi\circ\eps_{\A_1} 
	\,=\, \psi_{11}\circ\phi_{11}
	\end{gathered}
	\]

induced by $\delta_\B: 	\eps_{\B_0}\circ\eps_{\B_0}[-1]\to \eps_{\B_1}\circ\eps_{\B_1}^*$ from 
 \eqref{eq:triangle-delta-B} 
	
 \end{prop}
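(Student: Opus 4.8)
The plan is to reduce the statement to the previously established dictionary between functors of glued SODs and lax matrices, and then to unwind the two ambient identifications of $\A_0\oplus_F\A_1$, $\B_0\oplus_G\B_1$, $\C_0\oplus_H\C_1$ as an oplax colimit and a lax limit. Concretely, I would argue as follows. First, the composite $\psi\circ\phi$ is again an exact functor $\A_0\oplus_F\A_1 \to \C_0\oplus_H\C_1$, so by the equivalence $\ExFun(\A_0\oplus_F\A_1,\C_0\oplus_H\C_1)\=\LMat(F,H)$ it corresponds to a well-defined lax matrix, whose entries and coherence data are, by Proposition \ref{prop:lax-matrix-expl}(a)--(b) applied to $\psi\circ\phi$, given by pre- and post-composing with the various $\eps$'s and ${}^*\eps$'s. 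The task is therefore purely to compute $(\psi\phi)_{ij}:={}^*\eps_{\C_j\text{-or-}}\circ(\psi\circ\phi)\circ\eps_{\A_i\text{-or-}}[\text{shift}]$ in terms of the entries of $\|\phi_{ij}\|$ and $\|\psi_{ij}\|$.

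The key computational step uses the triangle \eqref{eq:triangle-delta-B} for the middle SOD $(\B_0,\B_1)$, namely
\[
\eps_{\B_0}({}^*\eps_{\B_0})[-1]\,\buildrel\delta_\B\over\lra\,\eps_{\B_1}\eps^*_{\B_1}\,\lra\,\Id_{\B_0\oplus_G\B_1},
\]
inserted between $\psi$ and $\phi$. Post-composing this triangle with ${}^*\eps_{\C_0}\circ\psi$ (or $\eps^*_{\C_1}\circ\psi$) and pre-composing with $\phi\circ\eps_{\A_i}[\text{shift}]$ (or $\phi\circ\eps_{\A_0}[-1][\text{shift}]$) yields, by exactness of all functors involved, a triangle whose third term is ${}^*\eps_{\C_j\text{-or-}}\circ(\psi\circ\phi)\circ\eps_{\A_i\text{-or-}}[\text{shift}]=(\psi\phi)_{ij}$, and whose first map is precisely the morphism $u_{ij}$ written in form (ii) of the statement. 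Rewriting the triangle $X\to Y\to(\psi\phi)_{ij}$ as $(\psi\phi)_{ij}\=\Cof\{X\to Y\}$, and recognizing $X=\psi_{0j}\circ\phi_{i0}$ and $Y=\psi_{1j}\circ\phi_{i1}$ via Proposition \ref{prop:lax-matrix-expl}(a) (this is where the $[-1]$-shift in the top row of a lax matrix is exactly what makes the indices line up correctly after composing the two shift conventions), gives $(\psi\phi)_{ij}=\Cof\{u_{ij}:\psi_{0j}\circ\phi_{i0}\to\psi_{1j}\circ\phi_{i1}\}$. The equivalence of descriptions (i) and (ii) of $u_{ij}$ is then a diagram chase: by Proposition \ref{prop:lax-matrix-expl}(b) the natural transformations of \eqref{eq:naturalmatrix} for $\|\psi_{ij}\|$ are themselves built out of $\delta_\B$ and the (co)units, so the two-step composite $\psi_{0j}\circ\phi_{i0}\to\psi_{1j}\circ G\circ\phi_{i0}\to\psi_{1j}\circ\phi_{i1}$ unwinds to the same $\delta_*$-then-$c_*$ map displayed in (ii).

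Finally, the coherence data of the resulting lax matrix — the four natural transformations \eqref{eq:naturalmatrix} for $\psi\circ\phi$ and the commuting square \eqref{eq:coherentsquare} — must be checked to be those induced by the lax matrix product structure; this follows formally from functoriality of the cone construction and the coherences already present in $\|\phi_{ij}\|$, $\|\psi_{ij}\|$, together with the pentagon/interchange coherences packaged in Theorem \ref{thm:oo-2-add}. The main obstacle I anticipate is not any single identification but the bookkeeping of the two competing shift conventions (source as oplax colimit, target as lax limit) through the threefold composite: one has to verify that the shifts $[-2]$, $[-1]$, $[-1]$, $[\,\cdot\,]$ appearing in the four displayed morphisms of (ii) are exactly the ones forced by composing a $(-1)$-shifted top row with another, so that no stray shift survives in $(\psi\phi)_{ij}$. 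Once the shifts are tracked honestly, everything else is the formal manipulation of fiber/cofiber triangles in stable $\oo$-categories, and I would relegate the explicit $\oo$-coherence verification to ``straightforward details'' exactly as the paper does for Proposition \ref{prop:Tot(Q)=iterfiber}.
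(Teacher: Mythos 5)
Your proposal follows essentially the same route as the paper's proof: apply Proposition \ref{prop:lax-matrix-expl}(a) to the composite $\psi\circ\phi$, insert the triangle \eqref{eq:triangle-delta-B} between $\psi$ and $\phi$ to obtain description (ii), and derive the equivalence with description (i) from Proposition \ref{prop:lax-matrix-expl}(b). Your more explicit shift-bookkeeping and coherence remarks are correct and simply spell out what the paper leaves implicit.
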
	
	
\noindent{\sl Proof:} 	With the statement (a), this is  variant of the computations from \cite {CDW}.
Alternatively, the statement with (b) is obtained directly by applying Proposition \ref{prop:lax-matrix-expl}(a) to $\psi\circ\phi$
and inserting the triangle of functors \eqref{eq:triangle-delta-B}  between $\psi$ and $\phi$ in the resulting formulas.
Finally, the equivalence of (a) and (b) follows from the detailed form of the morphisms in the lax matrices
given by  Proposition \ref{prop:lax-matrix-expl}(b). 
 \qed

\paragraph{Composition of several lax matrices.}

Next, let $F_i: \A_0^{(i)}\to \A_1^{(i)}$, $i=0, \cdots, N$, be $N+1$ exact functors between stable $\oo$-categories and
\[
\A^{(0)} = \A^{(0)}_0\oplus_{F_0} \A_1^{(0)} \buildrel \phi^{(1)}\over\lra  \A^{(1)} = \A_0^{(1)}\oplus_{F_1} \A_1^{(1)} 
\buildrel \phi^{(2)} \over\lra \cdots \buildrel \phi^{(N)}\over\lra\A^{(N)} =  \A_0^{(N)} \oplus_{F_n}\A_1^{(N)}
\]
be $N$ composable exact functors. Suppose that each $\phi^{(\nu)}$ is given by a lax matrix $\|\phi^{(\nu)}_{ij}\|$. 
Let $\phi = \phi^{(N)} \circ\cdots\circ\phi{(1)}$ be the composite functor. 
Just as in the product of $N$ usual $2\times 2$ matrices each matrix element is a sum of $2^{N-1}$ terms,
the matrix elements $\phi_{ij}: \A_i^{(0)} \to \A^{(n)}_j$ of $\phi$ can be found as totalizations of  certain 
$(N-1)$-cubes of functors. To describe them, consider the triangle of functors
\[
\eps_{\A^{(\nu)}_0} ({^*\eps}_{\A^{(\nu)}_0}) \buildrel \delta_{\A^{(\nu)}}\over\lra \eps_{\A^{(\nu)_1}} \eps_{\A^{(\nu)}_1}^*
\lra \Id_{\A^{(\nu)}}, \quad \nu=1,\cdots, N-1,
\]
an instance of \eqref{eq:triangle-delta}. Consider the morphism $ \delta_{\A^{(\nu)}}$ in this triangle as a
$1$-cube $D_\nu$ in $\ExFun(\A^{(\nu)}, \A^{(\nu)})$, so that $\Tot(D_\nu)=\Id_{\A^{(\nu)}}$. 
We have then the $(N-1)$-cube
\[
Q\,=\, \phi^{(N)} \circ D_{N-1}\circ \phi^{(N-1)}\circ D_{N-2}\circ \cdots \circ  \phi^{(2)} \circ D_1\circ \phi^{(1)}
\]
in $\Ex\Fun(\A^{(0)}, \A^{(N)})$, 
 obtained by inserting each $D_\nu$ between $\phi^{(\nu+1)}$ and $\phi^{(\nu)}$. Then $\Tot(Q)=\phi$. This
 implies the following.
 
 \begin{prop}\label{prop:comp-n-laxmat}
 We have $\phi_{ij}= \Tot(Q_{ij})$ where the $(N-1)$-cubes $Q_{ij}$ in $\ExFun(\A^{(0)}_i, \A^{(n)}_j)$ are
 obtained as: 
 \[
\begin{pmatrix}
Q_{00}= \, {^*}\eps_{\A^{(N)}_0}\circ  Q \circ  \eps_{\A^{(0)}_0} [-1], 
 & Q_{01} = \eps_{\A^{(N)}_1}^* \circ Q \circ \eps_{\A^{(0)}_0}[-1]
\\
Q_{10} = \, {^*}\eps_{\A^{(N)}_0}\circ  Q \circ \eps_{\A^{(0)}_1},
 & Q_{11} = \eps_{\A^{(N)}_1}^* \circ Q \circ \eps_{\A^{(0)}_1}
\end{pmatrix}. \quad\quad \qed
 \]
 \end{prop}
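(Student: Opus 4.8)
The statement is essentially a bookkeeping consequence of Proposition \ref{prop:comp-2-laxmat} applied iteratively, together with the behavior of totalization under (pre/post)composition with exact functors. The plan is to proceed by induction on $N$, with Proposition \ref{prop:comp-2-laxmat} furnishing both the base case and the inductive step, and to use Proposition \ref{prop:Tot(Q)=iterfiber}(a) to reorganize the iterated cofibers into a single totalization of an $(N-1)$-cube.

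First I would set up the $(N-1)$-cube $Q$ explicitly as in the paragraph preceding the statement: insert each $1$-cube $D_\nu$ (the morphism $\delta_{\A^{(\nu)}}$, viewed as a functor $\2 \to \ExFun(\A^{(\nu)},\A^{(\nu)})$ with totalization $\Id_{\A^{(\nu)}}$) between $\phi^{(\nu+1)}$ and $\phi^{(\nu)}$, so that $Q : \2^{[1,N-1]} \to \ExFun(\A^{(0)},\A^{(N)})$. The key preliminary observation is that totalization commutes with composition by exact functors on either side: if $P$ is a $k$-cube in $\ExFun(\X,\Y)$ and $u : \W \to \X$, $v : \Y \to \Z$ are exact, then $\Tot(v \circ P \circ u) \simeq v \circ \Tot(P) \circ u$, because exact functors preserve the limits and colimits defining $\Tot$ (via Proposition \ref{prop:Tot(Q)=iterfiber}, $\Tot$ is built from iterated fibers/cofibers). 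Applying this with the $D_\nu$ inserted and using $\Tot(D_\nu) = \Id_{\A^{(\nu)}}$ along each of the corresponding cube directions, Proposition \ref{prop:Tot(Q)=iterfiber}(a) (the ``Fubini'' statement that $\Tot$ of a cube is the iterated fiber along its coordinate directions) gives $\Tot(Q) = \phi^{(N)} \circ \cdots \circ \phi^{(1)} = \phi$.

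Next I would run the induction. For $N=1$ there is nothing to insert and the claim is just Proposition \ref{prop:lax-matrix-expl}(a). For $N=2$ it is exactly the content of Proposition \ref{prop:comp-2-laxmat}(b): the four morphisms $u_{ij}$ listed there are precisely $\eps_{\A^{(1)}_1}^*$ or ${}^*\eps_{\A^{(1)}_0}$ applied around the triangle \eqref{eq:triangle-delta} inserted between $\psi = \phi^{(2)}$ and $\phi = \phi^{(1)}$, i.e. the edges of the $1$-cube $D_1$, and $(\psi\phi)_{ij} = \Cof(u_{ij}) = \Tot$ of that $1$-cube composed with the appropriate $\eps$'s and shift. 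For the inductive step, factor $\phi = \phi^{(N)} \circ \bigl(\phi^{(N-1)}\circ\cdots\circ\phi^{(1)}\bigr)$, apply Proposition \ref{prop:comp-2-laxmat} to this two-fold composite to express each $\phi_{ij}$ as $\Cof$ of a morphism between composites of the lax-matrix entries of $\phi^{(N)}$ with those of $\phi^{(N-1)}\circ\cdots\circ\phi^{(1)}$, then substitute the inductive description of the latter entries as totalizations of $(N-2)$-cubes and recognize the resulting cofiber as the totalization of the $(N-1)$-cube $Q_{ij} = {}^*\eps_{\A^{(N)}_0} \circ Q \circ \eps_{\A^{(0)}_i}[\,\cdot\,]$ etc., using again Proposition \ref{prop:Tot(Q)=iterfiber}(a) to absorb the extra cofiber as the last coordinate direction and the compatibility of $\Tot$ with exact functors to move the outer $\eps$'s across.

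The main obstacle is purely organizational rather than conceptual: keeping the coherence data straight when one identifies the iterated-cofiber description coming from repeated application of Proposition \ref{prop:comp-2-laxmat} with the single-totalization description of the cube $Q_{ij}$. Concretely, one must check that the various natural transformations \eqref{eq:naturalmatrix} appearing as the edge maps $u_{ij}$ at each stage assemble, after inserting the $D_\nu$, into precisely the structure maps of the cube $Q$ obtained by composing $\phi^{(N)} \circ D_{N-1} \circ \cdots \circ D_1 \circ \phi^{(1)}$ — this is where the shift conventions in the top row of each lax matrix (Proposition \ref{prop:lax-matrix-expl} and the remark following it) must be tracked carefully so that the total shift $[-|I|]$ in Proposition \ref{prop:Tot(Q)=iterfiber}(b) matches. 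I expect these compatibilities to follow formally from the explicit forms in Proposition \ref{prop:lax-matrix-expl}(b) and the equivalence of descriptions (i) and (ii) in Proposition \ref{prop:comp-2-laxmat}, so I would state them as such and omit the diagram chase, exactly as the excerpt does for the two-matrix case.
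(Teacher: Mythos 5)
Your proposal is correct and follows essentially the same route as the paper: construct the cube $Q$ by inserting the $D_\nu$, observe $\Tot(Q)=\phi$, and conclude via Proposition \ref{prop:lax-matrix-expl}(a) together with the compatibility of $\Tot$ with pre/post-composition by exact functors. The additional induction you sketch via Proposition \ref{prop:comp-2-laxmat} is a valid unwinding of the same argument but is redundant once the direct observation $\Tot(Q)=\phi$ and the commutation of $\Tot$ with exact functors are in place.
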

	
The edges of $Q_{ij}$ are obtained from the morphisms $u_{kl}$ in Proposition \ref{prop:comp-2-laxmat}
for  the consecutive compositions. The formulation of Proposition \ref{prop:comp-n-laxmat}
  exhibits also the higher coherence data for commutativity of these edges.


\section {Continuant complexes of iterated adjoints}\label{sec:cont-compl}

\subsection{Fibonacci cubes and their totalizations}\label{subsec:fibcubes}

\paragraph{The poset $\Cot[1,N]$ inside the $(N-1)$-cube.}
Consider the  $(N-1)$-cube, i.e., the 
poset $\2^{N-1}$ of all subsets in $[1,N-1]$. We can think of $\2^{N-1}$ as consisting of sequences
$\eps=(\eps_1,\cdots, \eps_{N-1})$, $\eps_i\in \2=\{0,1\}$ so that the subset
$J\subset [1,N-1]$ associated to such an $\eps$ is $J=\{i|\eps_i=1\}$. 

Call $\eps\in\2^{N-1}$ {\em separated}, if there are no two successive terms
$\eps_i, \eps_{i+1}$ which are both equal to $1$, i.e., if
$\sum_{i=1}^{N-2} \eps_i\eps_{i+1}=0$. Let $\Gamma_N\subset \2^{N-1}$
ne the set of all separated sequences. The poset $\Gamma_N$ (with the
order induced from $\2^{N-1}$) is known as the $n$th {\em Fibonacci cube}
\cite{hsu}. 

\begin{prop}
$\Gamma_N\subset \2^{N-1}$ is a left order ideal, i.e., if $J\in \Gamma_N$,
$J'\in \2^{N-1}$ and $J'\leq J$, then $J'\in\Gamma_N$.
\end{prop}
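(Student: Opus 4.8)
The plan is to work directly with the componentwise description of the order on $\2^{N-1}$ together with the combinatorial characterization of $\Gamma_N$. First I would record the translation: under the identification of subsets $J\subseteq[1,N-1]$ with sequences $\eps\in\2^{N-1}$, the relation $J'\leq J$ is exactly $\eps'_i\leq\eps_i$ for all $i$, and $\eps\in\Gamma_N$ is equivalent to the vanishing of the nonnegative integer $v(\eps):=\sum_{i=1}^{N-2}\eps_i\eps_{i+1}$, which counts the ``violations'' of separatedness (a pair of consecutive coordinates both equal to $1$).

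The key step is the observation that $v$ is monotone for the componentwise order: if $\eps'\leq\eps$, then for each $i$ the factors satisfy $\eps'_i\leq\eps_i$ and $\eps'_{i+1}\leq\eps_{i+1}$ in $\{0,1\}$, hence $\eps'_i\eps'_{i+1}\leq\eps_i\eps_{i+1}$, and summing gives $v(\eps')\leq v(\eps)$. Consequently, if $\eps\in\Gamma_N$, i.e.\ $v(\eps)=0$, and $\eps'\leq\eps$, then $0\leq v(\eps')\leq v(\eps)=0$, so $v(\eps')=0$ and $\eps'\in\Gamma_N$. Equivalently, at the level of subsets: $J$ separated means $J$ contains no pair $\{i,i+1\}$ of consecutive integers, and any $J'\subseteq J$ a fortiori contains no such pair.

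I do not anticipate any real obstacle: the statement is a formal consequence of the ``local'' shape of the defining condition of $\Gamma_N$, which is cut out by conditions each involving only two adjacent coordinates, each such condition being downward closed in $\2^{N-1}$. The only thing worth stating carefully is the passage between the sequence picture and the subset picture, which is immediate from the definitions given above.
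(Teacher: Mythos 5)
Your proof is correct and is essentially the same argument as the paper's: both rest on the observation that separatedness is a conjunction of downward-closed local conditions on consecutive coordinates. Your formulation via the monotone violation count $v(\eps)$ is slightly more explicit for an arbitrary $\eps'\leq\eps$, whereas the paper treats the one-step case (flipping a single $1$ to $0$) and leaves the iteration implicit, but the underlying idea is identical.
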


\noindent{\sl Proof:} If $\eps=(\eps_1,\cdots, \eps_{N-1})$ is a separated
$(0,1)$-sequence and $\eps'$ is obtained from $\eps$ by changing
some $\eps_i$ from $1$ to $0$, then $\eps'$ is separated as well. \qed

\vskip .2cm

We have an embedding $\alpha: \Cot[1,N] \hra \2^{N-1}$ which associated to a 
cotwinned $I\subset [1,N]$ the subset $J_I\subset [1,N-1]$ of
``missing twins'' in $I$, i.e.,
\[
J_I \,=\, \bigl\{ i\in [1,N-1] \,\bigl| \, \{i, i+1\} \subset [1,N]\- I\bigr\}. 
\]
Because the missing twins in $I$ do not intersect, the $(0,1)$-sequence
$\eps$ associated to $I$ is separated. This means that
$\alpha$ induces a bijection $\Cot[1,N]\to \Gamma_N$. 
In particular, $|\Gamma_N|=\phi_N$ is the $n$th Fibonacci number. 
We equip $\Cot[1,N]$ with the partial order, denoted $\leq$, which is induced
from $\Gamma_N$ (i.e., from the inclusion order on $\2^{N-1}$) via
the map $\alpha$. 

Thus $[1,N]\in\Cot[1,N]$ is the minimum element (i.e., an element which
is less of equal than any other).  For $N\geq 3$ the set $\Cot[1,N]$
does not have a maximum element (greater or equal than any other). 
In particular, when $N\geq 4$ is even,  $\emptyset\in\Cot[1,n]$
is not a maximum element. This can be seen from the next example
which also clarifies the following issue.

An inequality $I\leq I'$ in $\Cot[1,N]$ implies an inclusion of
subsets $I\supset I'$, but the converse implication does not hold in general. 

\begin{ex}
Let $N=4$. The order $\leq$ on $\Cot[1,4]$ is depicted in Fig. \ref{fig:Cot[1,4]}, with arrows
indicating elementary instances of $\leq$. In particular, the inequality
$\{1,4\}\leq \emptyset$ does not hold. This is because the corresponding
sets of missing twins (one twin $\{2,3\}$ for $\{1,4\}$, two twins
$\{1,2\}, \{3,4\}$ for $\emptyset$) are not included into one another. 

\begin{figure}[h]
\centering
\begin{tikzpicture}
\node(1234) at (0,0){$\{1,2,3,4\}$}; 
\node (14) at (0,1) {$\{1,4\}$}; 
\node (34) at (-2,1) {$\{3,4\}$}; 
\node (12) at (2,1) {$\{1,2\}$}; 
\node (0) at (0,2) {$\emptyset$}; 

\draw [->, line width = 0.8] (1234) -- (34); 
\draw [->, line width = 0.8] (1234) -- (14);
\draw [->, line width = 0.8] (1234) -- (12); 
\draw [->, line width = 0.8](34) -- (0); 
\draw [->, line width = 0.8] (12) -- (0); 

\end{tikzpicture}
\caption{The poset $(\Cot[1,4],\leq)$.} \label{fig:Cot[1,4]}
\end{figure}
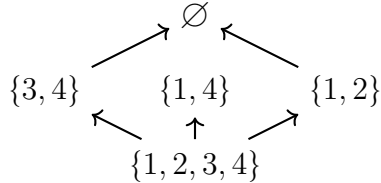
\end{ex}

We now explain, adapting an idea of  \cite{hsu}, how to lift the relation $\phi_N=\phi_{N-1}
+\phi_{N-2}$ for Fibonacci numbers to the level of posets. At the level of sets
this was done in the proof of Proposition \ref{prop:fibonacci}(a), so we need to
describe the partial orders. 

Let $f: A\to B$ be a monotone map of posets. We can consider $A,B$ as categories
and $f$ as a functor and form the contravariant Grothendieck construction
$\chi(f)$, see \S \ref{subsec:gen-oo-cat} above. The category $\chi(f)$ is again a poset, namely
the disjoint union $B\sqcup A$ with the order described as follows.
On the disjoint summands it is given by the original orders on $B$ and $A$.
For $b\in B, a\in A$ we have $b\leq a$ in $\chi(f)$ if and only if $b\leq f(a)$ in $B$.
The relation $a\leq b$ in $\chi(f)$ never holds. 

Consider now the embeddings
\be\label{eq:iota-n}
\iota_N: \Gamma_N \hra \Gamma_{N+1}, \quad (\eps_1,\cdots, \eps_{N-1})
\mapsto  (\eps_1,\cdots, \eps_{N-1},0). 
\ee

\begin {prop}
We have an isomorphism of posets $\Gamma_N \simeq \chi(\iota_{N-2}: \Gamma_{N-2}
\to\Gamma_{N-1})$. 
\end{prop}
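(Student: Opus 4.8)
The plan is to verify directly that the poset $\Gamma_N$ decomposes as a disjoint union $\Gamma_{N-1} \sqcup \Gamma_{N-2}$ of subsets, and then to match the induced partial order with the one coming from the contravariant Grothendieck construction $\chi(\iota_{N-2})$, using the explicit description of the order on $\chi(f)$ recalled just before the statement.

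First I would describe the two subsets. Inside $\Gamma_N \subset \2^{N-1}$, every separated sequence $\eps = (\eps_1,\dots,\eps_{N-1})$ either has $\eps_{N-1}=0$ or $\eps_{N-1}=1$. The sequences with $\eps_{N-1}=0$ are exactly the image of the embedding $\iota_{N-2}\colon\Gamma_{N-2}\hookrightarrow \Gamma_{N-1}$ followed by $\iota_{N-1}$; more precisely, deleting the last coordinate identifies $\{\eps\in\Gamma_N : \eps_{N-1}=0\}$ with $\Gamma_{N-1}$. If instead $\eps_{N-1}=1$, then separatedness forces $\eps_{N-2}=0$, so $\eps$ is determined by $(\eps_1,\dots,\eps_{N-3})$, which ranges over all separated sequences of length $N-3$; deleting the last two coordinates identifies $\{\eps\in\Gamma_N : \eps_{N-1}=1\}$ with $\Gamma_{N-2}$. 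This is the poset-level refinement of the set-level bijection $\Cot[1,N]\to\Cot[1,N-1]\sqcup\Cot[1,N-2]$ from the proof of Proposition~\ref{prop:fibonacci}(a). Call these two summands $\Gamma_{N-1}$ (the ``$\eps_{N-1}=0$'' part, playing the role of $B$ in $\chi(f)$) and $\Gamma_{N-2}$ (the ``$\eps_{N-1}=1$'' part, playing the role of $A$).

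Next I would check the order relations, of which there are four kinds to consider in $\Gamma_N$: between two elements of the $B$-part, between two elements of the $A$-part, from $B$ to $A$, and from $A$ to $B$. For two elements with $\eps_{N-1}=0$, the inclusion order in $\2^{N-1}$ restricted to them is visibly the inclusion order of $\Gamma_{N-1}$ under the last-coordinate-deletion identification; this matches the order on the $B$-summand of $\chi(\iota_{N-2})$. For two elements with $\eps_{N-1}=1$ (hence $\eps_{N-2}=0$), comparing them in $\2^{N-1}$ amounts to comparing their truncations in $\2^{N-3}$, i.e.\ the inclusion order of $\Gamma_{N-2}$; this matches the order on the $A$-summand. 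An element $\eps'$ with $\eps'_{N-1}=1$ can never be $\le$ an element $\eps$ with $\eps_{N-1}=0$ in $\2^{N-1}$ (the last coordinate would have to decrease), matching the fact that $a\le b$ never holds in $\chi(f)$. Finally, for $\eps$ with $\eps_{N-1}=0$ (corresponding to $b\in\Gamma_{N-1}$) and $\eps'$ with $\eps'_{N-1}=1,\eps'_{N-2}=0$ (corresponding to $a\in\Gamma_{N-2}$), we have $\eps\le\eps'$ in $\2^{N-1}$ iff $\eps_i\le\eps'_i$ for all $i$; since $\eps_{N-1}=0\le 1=\eps'_{N-1}$ automatically and $\eps'_{N-2}=0$ forces $\eps_{N-2}=0$ as well, this reduces to $\eps_i\le\eps'_i$ for $i\le N-3$, i.e.\ to $b\le \iota_{N-2}(a)$ in $\Gamma_{N-1}$ — which is exactly the defining relation $b\le f(a)$ in $\chi(\iota_{N-2})$. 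Collecting these four cases gives the claimed poset isomorphism.

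The main obstacle, such as it is, lies in the last case: one must be careful that the constraint $\eps'_{N-2}=0$ (imposed by separatedness once $\eps'_{N-1}=1$) is precisely what makes the embedding into $\Gamma_{N-1}$ land where $\iota_{N-2}$ sends it, and that no extra relations sneak in because a shorter separated sequence might, a priori, be $\le$ a longer one in a way not detected by $\iota_{N-2}$. Unwinding the definition of $\iota_{N-2}$ (append a $0$) shows this cannot happen, so the bookkeeping closes up. Everything else is a routine translation between the $\2^{N-1}$ description and the Grothendieck-construction description, so no serious difficulty is expected.
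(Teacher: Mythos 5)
Your proof is correct and takes essentially the same route as the paper: split $\Gamma_N$ according to the value of $\eps_{N-1}$, identify the two pieces with $\Gamma_{N-1}$ and $\Gamma_{N-2}$, and check the four kinds of order relations against the explicit description of the order on $\chi(\iota_{N-2})$. One cosmetic slip: the set of sequences with $\eps_{N-1}=0$ is the image of $\iota_{N-1}$, not of ``$\iota_{N-2}$ followed by $\iota_{N-1}$'' (the latter would force the last two digits to be zero) --- but your ``more precisely'' clause already states the correct identification, so this does not affect the argument.
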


Thus $\Gamma_N$ can be obtained from the ``seed embedding'' $\Gamma_1=\{0\}
\buildrel \iota_1\over\lra \Gamma_2 =\{0 < 1\}$ by repeatedly forming the 
Grothendieck construction of a monotone map, embedding the target of the map
there, forming the Grothendieck construction of this new embedding map
and so on. 

\vskip .2cm

\noindent {\sl Proof:} Inside $\Gamma_N$, we find a copy $\Gamma'_{N-1}$
of $\Gamma_{N-1}$ formed by $\eps=(\eps_1,\cdots, \eps_{N-1})$
with $\eps_{N-1}=0$, and a copy $\Gamma'_{N-2}$ of $\Gamma_{N-2}$
formed by $\eps$ with $\eps_{N-2}=0, \eps_{N-1}=1$. The induced orders on
these copies correspond to the standard orders on $\Gamma_{N-1}$
and $\Gamma_{N-2}$. As a set,
$\Gamma_N=\Gamma'_{N-1}\sqcup \Gamma'_{N-2}$. 
Further, let $\eps\in \Gamma'_{N-1}$ and $\zeta\in\Gamma'_{N-2}$.
Then the relation $\eps\leq\zeta$ in $\Gamma_N$ never holds because
the last digits do not allow it. The relation in the opposite order, namely
\[
\zeta=(\zeta_1, \cdots, \zeta_{N-2},0) \leq \eps = (\eps_1, \cdots\eps_{N-3}, 0,1)
\]
in $\Gamma_N$, is eqiuvent to the relation
\[
(\zeta_1, \cdots, \zeta_{N-2})\leq (\eps_1, \cdots, \eps_{N-3},0) =
\iota_{N-2}(\eps_1, \cdots, \eps_{N-3})
\]
in $\Gamma_{N-1}$. This matches the definition of the Grothendieck construction.
\qed

\paragraph{Totalization of Fibonacci cubes.} Let $\C$ be a stable $\oo$-category.
By a {\em covariant} (resp. {\em contravariant}) {\em $N$th Fibonacci cube in} 
$\C$ we will mean an $\oo$-functor $  \Gamma_N\to \C$,
resp. $\Gamma_N^\op\to\C$. The adjective ``covariant'' will be often
omitted. 

Let $\alpha: \Gamma_N\to\2^{N-1}$ be the standard embedding above. Given
a  Fibonacci cube $\gamma: \Gamma_N\to\C$ in $\C$, we can form the
right Kan extension  which is an $(N-1)$-cube $\alpha_*\gamma: \2^{N-1}\to\C$
in $\C$. Since
$\Gamma_N\subset \2^{N-1}$ is an left order ideal, $\alpha_*\gamma$ is
nothing but the extension of $\gamma$ by zero objects, i.e.,
\[
(\alpha_*\gamma)(\eps) = 
\begin{cases}
\gamma(\eps), & \text{if } \eps \text{ is separated}, 
\\
0, &\text{ otherwise}. 
\end{cases}
\]
Similarly, for a contravariant Fibonacci cube $\gamma: \Gamma_N^\op\to\C$
the left Kan extension $\alpha^\op_!\gamma: (\2^{N-1})^\op\to\C$ 
is a contravariant $(N-1)$-cube in $\C$ obtained by extending $\gamma$ by
zero objects. 

For a covariant (resp. contravariant) Fibonacci cube $\gamma$ in $\C$
we define the {\em totalization} of $\gamma$ as
the totalization of the corresponding cube obtained by extending $\gamma$ by zeroes,
i.e., 
 $\Tot(\gamma) = \Tot(\alpha_*\gamma)$, resp. 
  $\Tot(\gamma) = \Tot(\alpha^\op_!\gamma)$.

  Given an $N$th Fibonacci cube $\gamma: \Gamma_N\to\C$ in $\C$, we
  have the $(N-1)$st Fibonacci cube $\gamma': \Gamma_{N-1}\to\C$
  obtained by composing $\gamma$ with 
  the embedding $\iota_{N-1}:\Gamma_{N-1}\hra \Gamma_N$ given in
  \eqref{eq:iota-n} which adds $0$ at the end.
  We also have the $(N-2)$nd  Fibonacci cube $\gamma'': \Gamma_{N-2}\to\C$
  obtained by composing $\gamma$ with 
  the embedding    
  \[
  \wt\iota_{N-2}: \Gamma_{N-2}\to\Gamma_N, \quad \wt\iota_{N-2}
  (\eps_1, \cdots, \eps_{N-3}) = (\eps_1, \cdots, \eps_{N-3}, 0,1). 
  \]
  Similarly for contravariant Fibonacci cubes. 
  
  \begin {prop}[(The Fibonacci triangles)]\label{prop:fib-tri-1}
  For a covariant, resp. contravariant
   $n$th Fibonacci cube $\gamma$ in $\C$ we have a triangle
  \[
  \Tot(\gamma) \lra \Tot(\gamma') \lra \Tot(\gamma''), 
  \]
  resp.
  \[
  \Tot(\gamma) \lra \Tot(\gamma'') \lra \Tot(\gamma'). 
  \]
  \end{prop}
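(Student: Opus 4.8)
The plan is to derive both triangles from Proposition \ref{prop:Tot(Q)=iterfiber}(a), which computes the totalization of a cube as an iterated fibre along a single coordinate. I treat the covariant case in detail; the contravariant one is obtained by passing to opposites. Write $Q=\alpha_*\gamma\colon \2^{N-1}\to\C$ for the $(N-1)$-cube obtained by extending $\gamma$ by zero objects, so that $\Tot(\gamma)=\Tot(Q)$ by definition. Applying Proposition \ref{prop:Tot(Q)=iterfiber}(a) with $i=N-1$ (the last coordinate) gives a triangle
\[
\Tot(Q)\lra \Tot(\del_{N-1}^0 Q)\lra \Tot(\del_{N-1}^1 Q),
\]
and the task is to identify the two outer terms with $\Tot(\gamma')$ and $\Tot(\gamma'')$ respectively.

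For the lower face: $\del_{N-1}^0 Q$ is the restriction of $Q$ to subsets $J\subseteq[1,N-2]$, i.e.\ to separated-or-not $(0,1)$-sequences with $\eps_{N-1}=0$. Such a sequence is separated exactly when its length-$(N-2)$ truncation is, i.e.\ when $J\in\Gamma_{N-1}$, and in that case $Q(J)=\gamma(\iota_{N-1}(J))=\gamma'(J)$ by the definition of $\gamma'$. Hence $\del_{N-1}^0 Q=\alpha_*\gamma'$ (extension by zero from $\Gamma_{N-1}$ into $\2^{N-2}$), so $\Tot(\del_{N-1}^0 Q)=\Tot(\gamma')$. For the upper face: $\del_{N-1}^1 Q$ sends $J\subseteq[1,N-2]$ to $Q(J\cup\{N-1\})$, whose sequence has $\eps_{N-1}=1$; separatedness then forces $\eps_{N-2}=0$. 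Thus $\del_{N-1}^1 Q$ vanishes on the upper face in the coordinate $N-2$, i.e.\ $\del_{N-2}^1\del_{N-1}^1 Q=0$, and a second application of Proposition \ref{prop:Tot(Q)=iterfiber}(a) (now with $i=N-2$) yields $\Tot(\del_{N-1}^1 Q)\=\Tot(\del_{N-2}^0\del_{N-1}^1 Q)$. Finally $\del_{N-2}^0\del_{N-1}^1 Q$ sends $J''\subseteq[1,N-3]$ to the value of $Q$ on the length-$(N-1)$ sequence obtained by appending $0,1$ to the sequence of $J''$; this is separated iff that sequence is, and then equals $\gamma(\wt\iota_{N-2}(J''))=\gamma''(J'')$. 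So $\del_{N-2}^0\del_{N-1}^1 Q=\alpha_*\gamma''$, giving $\Tot(\del_{N-1}^1 Q)=\Tot(\gamma'')$. Substituting into the first triangle proves the covariant statement.

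For the contravariant case I would run the identical argument inside $\C^{\op}$ (equivalently, with all cubes replaced by their opposites): reversing the order on $\2^{N-1}$ interchanges the two face operators $\del_i^0$ and $\del_i^1$, which is precisely what exchanges the roles of $\gamma'$ and $\gamma''$ and produces the triangle $\Tot(\gamma)\to\Tot(\gamma'')\to\Tot(\gamma')$. The one step that needs genuine care is the combinatorial observation in the previous paragraph: one must notice that separatedness forces the penultimate digit to vanish, so that the face $\del_{N-1}^1 Q$ of the zero-extended cube is itself zero-extended in one further direction, and hence can be collapsed onto $\alpha_*\gamma''$ by a single extra instance of the iterated-fibre triangle. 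Everything else is routine bookkeeping with the embeddings $\iota_{N-1}$ and $\wt\iota_{N-2}$ and the extension-by-zero description of $\alpha_*$.
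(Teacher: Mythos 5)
Your proof is correct and follows the same approach as the paper's, which applies Proposition~\ref{prop:Tot(Q)=iterfiber}(a) to the last coordinate of $Q=\alpha_*\gamma$ and identifies the two outer faces with the zero-extensions of $\gamma'$ and $\gamma''$. You are in fact slightly more careful than the published proof: the paper's one-line identification of $\del_{N-1}^1 Q$ with ``the $(N-2)$-cube obtained by extending $\gamma''$ by zeroes'' glosses over the dimension gap (an $(N-2)$-cube versus the $(N-3)$-cube $\alpha_*\gamma''$), which you resolve correctly by a second application of Proposition~\ref{prop:Tot(Q)=iterfiber}(a) in direction $N-2$, using that $\del_{N-2}^1\del_{N-1}^1 Q=0$.
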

 
 \noindent{\sl Proof:} In the $(N-1)$ cube $Q$
 obtained by extending $\gamma$ by zeroes, the faces 
 $\del_{N-1}^0 Q$, $\del_{N-1}^1 Q$
 are precisely  the $(N-2)$-cubes obtained by extending $\gamma'$ and $\gamma''$
 by zeroes, so our statement follows from Proposition
 \ref{prop:Tot(Q)=iterfiber}. \qed
 
 
 \subsection {Continuant cubes associated to an adjoint string.} \label{subsec:cont-cubes}
 
 For $N\in \ZZ$ let $\ol N\in \{0,1\}$ be its residue modulo $2$. 
 
 Let $\A_0, \A_1$ be stable $\oo$-categories. By an {\em adjoint string of length $n$
 for} $(\A_0, \A_1)$ we will mean a sequence $(F_1, \cdots, F_N)$
 of  exact $\oo$-functors in alternate directions
 \[
 F_1: \A_1\lra \A_0, \quad F_2: \A_0\lra \A_1, \quad\cdots,\quad
 F_N: \A_{\ol N} \lra \A_{\ol{N+1}}
 \]
 with each $(F_i, F_{i+1})$ forming an adjoint pair. In particular, we have the
 unit and counit transformations
 \[
 e_i: \Id_{\A_{\ol i|}}\lra F_{i+1} F_i, \quad  c_i: F_i F_{i+1}\lra \Id_{\A_{\ol{i+1}}}, \,\,\,
 i=1,\cdots, N-1. 
 \]
 Further, let $I\subset [1,N]$ be a cotwinned subset. Writing $I=\{i_1 <\cdots < i_k\}$,
 we associate to an adjoint string $(F_1,\cdots, F_n)$ the compositions
 \[
 F_I = F_{i_1}\cdots F_{i_k}: \A_{\ol N} \lra \A_0, \quad
 F^I= F_{i_k} \cdots F_{i_1}: \A_1\lra \A_{\ol{N+1}}. 
 \]
 Here the fact that the compositions make sense and have the source and target
 as indicated, follows from the assumption that $I$ is cotwinned. 
 
 Let $I\leq I'$ be an elementary inequality of cotwinned subsets, i.e., $I'\subset I$
 is obtained by removing one more twin $\{i_\nu, i_{\nu+1}=i_\nu+1\}$. 
 We have the insertion and contraction transformations
 \[
 \begin{gathered}
 e_{I'I} \,= \, F_{i_k} \cdots F_{i_{\nu+2}}\circ_0 e_{i_\nu}
 \circ_0 F_{i_{\nu-1}}\cdots F_{i_1}: F^{I'} \lra F^I,
 \\
 c_{II'}\,=\, F_{i_1}\cdots F_{i_{\nu-1}}\circ_0 c_{i_\nu} \circ_0 
 F_{i_{nu+2}}\cdots F_{i_k}: F_I \lra F_{I'}. 
 \end{gathered}
 \]
  They define a contravariant and a covariant representation of the $1$-skeleton
  of the Fibonacci cube $\Cot[1,N]\=\Gamma_n$ in the corresponding functor
  categories. We next explain how these skeleton representations extend to
  full-fledged  contravariant and covariant Fibonacci cubes.
 Let us  start with some general remarks.
  
  \vskip .2cm
  
  Let $(S,\leq)$ be a poset. For $s\in S$ we denote by
  \[
  S^{\leq s} \,=\,\{t\in S|\,  t\leq s\}, \quad S^{\geq s}\,=\, \{t\in S|\,  t\geq s\}
  \]
  the lower and upper interval of $s$. Clearly, $S$ is the inductive limit (in the
  category of sets) of the $S^{\leq s}$, as well as of the $S^{\geq s}$.
  Moreover, at the level of nerves we have
  \[
  \N(S)\,=\,\varinjlim_{s\in S}\nolimits^{\Set_\Delta} \N(S^{\leq s}) \,=\,
  \varinjlim_{s\in S^\op}\nolimits^{\Set_\Delta} \N(S^{\leq s}). 
  \]
  This is because each simplex of $\N(S)$, i.e., each chain $(s_0\leq\cdots \leq s_p)$
  lies in the nerve of some lower interval, namely of $S^{\leq s_p}$,
  as well as in the nerve of some upper interval, namely of $S^{\geq s_0}$.
  Let $\C$ be any $\oo$-category. It follows that a datum of an $\oo$-functor
  $S\to \C$, i.e., of a morpism of simplicial sets $\N(S)\to\C$, is equivalent to
  the datum of a compatible system of $\oo$-functors $S^{\leq s} \to \C$
  as well as to the datum of a compatible system of $\oo$-functors
  $S^{\geq s}\to\C$. 
  
  \vskip .2cm
  
  We apply this observation to $S=(\Cot[0,N],\leq)$ which is identified with
  the Fibonacci cube $\Gamma_N$. 
  Thus any lower interval $\Cot[0,N]^{\leq I}$
  is naturally a $J_I$-cube, where $J_I \subset[1,N-1] $ is the set of
  $j$ such that the twins $\{j, j+1\}$ form a partition of $\ol I :=[1,N] \- I$. 
  To define an $\oo$-functor $\Cot[1,N]\to \C$ is therefore the same as to define
  a compatible system of $J_I$-cubes $\Cot[1,N]^{\leq I} \to\C$.
  Similarly for an $\oo$-functor $\Cot[1,N]^\op\to\C$. 
  
  \vskip .2cm
  
  Let us now return to the situation of an adjoint string $(F_1,\cdots, F_N)$. 
  Given $I\in\Cot[0,N]$, we write $[0,N]\- I$ as the union of twins
  \[
  [0,N]\- I \,=\, \bigl\{ j_1, j_1+1 < j_2, j_2+1 < \cdots < j_p, j_p+1\bigr\},
  \]
  so that $J_I = \{j_1, \cdots, j_p\}$ and $I$ becomes partitioned into
  the twins and (possibly empty) intervals between them: 
  \[
  [1,N] \,=\, [1, j_1-1] \cup \{j_1, j_1+1\} \cup [j_1+2, j_2-1] \cup \{j_2, j_2+1\}
  \cup\cdots\cup \{j_p, j_p+1\}\cup [j_{p}+2, N]. 
  \]
  For every subinterval $[a,b]\subset [1,N]$, $a\leq b$,  we write
  \[
  F_{[a,b]}\,=\, F_a F_{a+1}\cdots F_b, \quad F^{[a,b]} \,=\, F_b F_{b-1} \cdots F_a, 
    \]
 setting also  $F_{[a,b]}=F^{[a,b]}:=\Id$ for  $a>b$ (empty interval).
 
Let us consider the unit and counit arrows $e_j, c_j$ as $1$-cubes of functors ${\bf e}_j$, ${\bf c}_j$. 

  With these preparations, we define the $J_I$-cube in  $\Fun( \A_{\ol N}, \A_0)$
  \[
  \Ec_N^{\leq I}(F_1,\cdots, F_N) \,=\, 
  F_{[1, j_1-1]}\circ {\bf c}_{j_1} \circ F_{[j_1+2, j_2-1]} \circ {\bf c}_{j_2}
  \circ \cdots\circ  {\bf c}_{j_p}\circ  F_{[j_p+2, n]}
  \]
  by inserting the  $1$-cube ${\bf c}_{j_\nu}$
  between $F_{[j_{\nu-1}+2, j_\nu -1]}$
  and $F_{[j_\nu+2, j_{\nu+1}-1]}$.

  Similarly, we define a contravariant $J_I$-cube in $\Fun(\A_1, \A_{\ol{N+1}})$
  \[
  \Ec^N_{\leq I}(F_N,\cdots, F_1) \,=\, F^{[j_p+2,N]}\circ  {\bf e}_{j_p}\circ 
  F^{[j_{p-1}+2, j_p-1]} \circ \cdots \circ  {\bf e}_{j_1}\circ F^{[1, j_1-1]}
  \]
  by inserting the $1$-cubes ${\bf e}_{j_\nu}$ given by the unit arrows. 
  It is clear by construction that for different $I\in\Cot[1,N]$ these cubes
  are compatible to each other and so define the Fibonacci cubes
  \[
  \begin{gathered}
  \Ec_N(F_1,\cdots, F_N) \,=\,\varinjlim\nolimits_{I} \, \Ec_N^{\leq I}(F_1,\cdots, F_N):
  \Cot[1,N] \lra\Fun(\A_{\ol N}, \A_0), 
  \\
    \Ec^N(F_N,\cdots, F_1) \,=\,\varinjlim\nolimits_{I} \, \Ec^N_{\leq I}(F_N,\cdots, F_1):
  \Cot[1,N]^\op  \lra\Fun(\A_1, \A_{\ol{N+1}}),
  \end{gathered}
  \]
  with $\varinjlim_I$ meaning the Fibonacci cube assembled out of the ordinary
  cubes. 
  We call them the $N$th {\em homological and cohomological
  continuant cubes} associate to $(F_1,
  \cdots, F_N)$. They are categorical analogs of the Euler polynomials
  $E_N(x_1,\cdots x_N)$ and $E_N(x_N, \cdots,x_1)$ respectively. 
  
  \begin{exas}\label{exas:fibcubes}
  (a) For $N=2$
  \[
  \Ec_2(F_1, F_2) \,=\,\{ F_1F_2 \buildrel c_1\over \lra \Id\}, \quad \Ec^2(F_2, F_1)
  \,=\, \{\Id\buildrel e_1\over \lra F_2 F_1\}
  \]
  are the counit and unit arrows ($1$-cubes).
  
  \vskip .2cm
  
  (b) For $N=3$ the Fibonacci cube $\Ec_3(F_1, F_2, F_3)$
  resp.  $\Ec^3(F_3, F_2, F_1)$ is the amalgamation of two counit
  resp. unit arrows:
  \[
  \xymatrix{
  F_1F_2F_3
  \ar[d]_{c_1\circ_0 F_3}
  \ar[rr]^{F_1\circ_0 c_2}&& F_1, 
  \\
  F_3 &&
  }
  \quad\quad
  \xymatrix{
  F_3F_2F_1&& \ar[ll]_{e_2\circ_0 F_1} F_1.
  \\
  F_3\ar[u]^{F_3\circ_0 e_2} &&
  }
  \]
 (c) For $N=4$ the Fibonacci cube $\Ec_4(F_1, F_2, F_3, F_4)$
 is the amalgamation of the $1$-cube (arrow) on the left and 
 the $2$-cube on the right:
 \[
 \xymatrix{
 F_1F_4&& \ar[ll]_{F_1\circ_0 c_2\circ_0 F_4} F_1F_2F_3F_4
 \ar[d]_{F_1F_2\circ_0 c_3}
 \ar[rr]^{c_1\circ_0 F_3F_4}&& F_3F_4
 \ar[d]^{c_3}
 \\
 && F_1F_2 \ar[rr]_{c_1}&&\Id. 
 }
 \]
 (d) For $N=5$ the Fibonacci cube $\Ec_5(F_1,\cdots, F_5)$ iis the amalgamation
 of three $2$-cubes,  each given by a pair of independent counits:
 \[
 \xymatrix{
 & F_1 F_4 F_5 \ar[r]&F_1
 \\
 F_1 F_2 F_5 \ar[d] & \ar[l] \ar[d] F_1F_2F_3F_4F_5 \ar[u] \ar[r]& F_1F_2F_3 \ar[u] \ar[d]
 \\
 F_5&\ar[l] F_3F_4F_5 \ar[r]& F_3. 
 }
 \]
  \end{exas}


\subsection {Higher twists and cotwists of a functor.}\label{subsec:hi-twist}

\paragraph{Definitions. Fibonacci triangles.}

Let $F: \A_1\to \A_0$ be an exact $\oo$-functor between stable $\oo$-categories
and $N\geq 1$. 
Assume that $F$ has iterated adjoints $F^*, F^{**}, \cdots, F^{* (N-1)}$. 
We define the $N$th {\em twist} $\EE_N(F)$
and $N$th {\em  cotwist} $\EE^N(F)$ to be the functors
\be
\EE_N(F) = \Tot\,  \Ec_N(F, F^*,\cdots, F^{*(N-1)}), 
\quad
\EE^N(F) = \Tot\,  \Ec^N(F^{*(N-1)},\cdots,  F^*, F). 
\ee
We also put $\EE_0(F)=\EE^0(F)=0$.
Thus, for small values of $N$ we have:
\[
\begin{gathered}
\EE_1(F)=\EE^1(F)=F,
\\
\EE_2(F) = \Fib\{ FF^*\buildrel c_F\over\lra \Id\}, 
\quad \EE^2(F) = \Cof\{ \Id\buildrel e_F\over\lra F^* F\}, 
\\
\EE_3(F) = \Fib\bigl\{ 
\xymatrix{F F^* F^{**} \ar[rrr]^{ (F\circ c_{F^*}, \,\, \,
c_F\circ F^{**} )
 }
&&&
  F\oplus F^{**} 
}
\bigr\}, 
\\
\EE^3(F) = \Cof \bigl\{ 
\xymatrix{
  F\oplus F^{**} 
  \ar[rrr]^{ (F\circ e_{F^*})
 +
(e_F\circ F^{**} ) 
 }
 &&& F F^* F^{**}
}
\bigr\}. 
\end{gathered}
\]

Applying Proposition \ref{prop:fib-tri-1}, we get   exact triangles which
we also call the {\em Fibonacci triangles}
\be\label{eq:fib-tri-2}
\begin{gathered}
\EE_N(F)\buildrel\beta_{N,F}\over  \lra \EE_{N-1}(F) F^{*(N-1)} \buildrel\alpha_{N,F}\over\lra \EE_{N-2}(F), 
\\
\EE^{N-2}(F)\buildrel{\rho_{N,F}}\over \lra F^{*(N-1)}  \EE^{N-1}(F)\buildrel \pi_{N, F}\over \lra \EE^N (F). 
\end{gathered}
\ee

Let us also note the following. 

\begin{prop}\label{prop:E_N-dual-E^N}
$\EE^N(F^*)$ is identified with $(\EE_N(F))^*$, the right adjoint to $\EE_N(F)$. 
\end{prop}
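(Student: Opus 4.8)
\medskip\noindent\emph{Proof sketch (plan).}
The plan is to show that the operation ``pass to the right adjoint'', applied at the level of coherent cubes of functors, carries the homological continuant cube of $F$ to the cohomological continuant cube of $F^*$, and carries one totalization procedure to the other; the equivalence $\EE^N(F^*)\simeq(\EE_N(F))^*$ then drops out. As the statement tacitly requires, I assume the iterated right adjoints of $F$ exist up to order $N$, which is exactly what makes $\EE^N(F^*)$ defined and $\EE_N(F)$ right-adjointable.

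First I would recall the formal properties of $(-)^*$. On the (non-full) subcategory of $\Fun(\A_i,\A_j)$ spanned by functors admitting a right adjoint, $G\mapsto G^*$ is the object-part of an equivalence, contravariant on natural transformations, onto the functors admitting a left adjoint (see \cite{lurie:ha}); it sends $\Id\mapsto\Id$, $0\mapsto 0$, $G_1G_2\mapsto G_2^*G_1^*$, preserves finite products and coproducts, turns $H\circ_0(-)$ into $(-)\circ_0 H^*$ and $(-)\circ_0 H$ into $H^*\circ_0(-)$, and, since $0^*\simeq 0$, commutes with extension-by-zeroes of Fibonacci cubes. The one extra, elementary, input I need is: if $L\dashv R\dashv R'$, then the right adjoint of the counit $c\colon LR\to\Id$ is the unit $\Id\to R'R$ of $R\dashv R'$, which is a two-line consequence of the triangle identities.

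Next I would carry out the combinatorial matching. Let $Q\colon\2^{N-1}\to\Fun(\A_{\ol N},\A_0)$ be the zero-extension of $\Ec_N(F,F^*,\dots,F^{*(N-1)})$, so that $\EE_N(F)=\Tot(Q)$, and let $Q^{\vee}\colon(\2^{N-1})^{\op}\to\Fun(\A_0,\A_{\ol N})$ be obtained by applying $(-)^*$ at every vertex and reversing every edge (legitimate, since each vertex of $Q$ is a composite of functors admitting right adjoints). Since $F^{*(i-1)}\dashv F^{*i}\dashv F^{*(i+1)}$ for $1\le i\le N-1$ (here the existence of $F^{*N}$ is used), at a cotwinned $I=\{i_1<\dots<i_k\}$ the vertex $F_I=F^{*(i_1-1)}\cdots F^{*(i_k-1)}$ is sent to $F^{*i_k}\cdots F^{*i_1}$, and, by the counit fact above, each structural counit is sent to the matching structural unit of the adjoint string $\big((F^*)^{*0},\dots,(F^*)^{*(N-1)}\big)=(F^*,F^{**},\dots,F^{*N})$ for the pair $(\A_1,\A_0)$. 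Comparing vertex-by-vertex and edge-by-edge with the construction of $\Ec^N$ — the whiskerings in the structure maps transform exactly as needed — identifies $Q^{\vee}$ with the zero-extension of $\Ec^N\big((F^*)^{*(N-1)},\dots,(F^*)^*,F^*\big)$; the higher coherences need no separate check because $(-)^*$ is a functor.

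Finally I would verify that $\Tot$ and $(-)^*$ commute. From the presentation $\Tot(Q)=\Cof\{\varinjlim Q^{\neq I}\to Q(I)\}[-|I|]$ of Proposition~\ref{prop:Tot(Q)=iterfiber}(b), with $I=[1,N-1]$, together with the facts that $(-)^*$ sends a finite colimit of right-adjointable functors to the limit of their adjoints, sends $\Cof$ to $\Fib$, and sends $[-m]$ to $[m]$, one obtains
\[
(\EE_N(F))^{*}\;\simeq\;\Fib\big\{\,Q^{\vee}(I)\to\varprojlim\nolimits_{J\neq I}Q^{\vee}(J)\,\big\}[\,N-1\,],
\]
and the right-hand side is exactly the totalization of the contravariant cube $Q^{\vee}$, hence of $\Ec^N\big((F^*)^{*(N-1)},\dots,(F^*)^*,F^*\big)$; combined with the previous step this gives $(\EE_N(F))^{*}\simeq\EE^N(F^*)$. (Sanity check for $N=2$: $(\EE_2 F)^{*}=\Fib\{c_F\colon FF^*\to\Id\}^{*}\simeq\Cof\{e_{F^*}\colon\Id\to F^{**}F^*\}=\EE^2(F^*)$.) I expect the hard part to be the middle step: organizing the vertex/edge dictionary so that the image cube is \emph{visibly} the cohomological continuant cube of $F^*$, i.e.\ tracking which unit or counit, and which whiskering, sits at each vertex; the first and last steps are then formal. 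A lighter alternative is induction on $N$ via the Fibonacci triangles \eqref{eq:fib-tri-2}: applying $(-)^*$ to the first triangle and feeding in $(\EE_{N-1}F)^{*}\simeq\EE^{N-1}(F^*)$ and $(\EE_{N-2}F)^{*}\simeq\EE^{N-2}(F^*)$ yields the second Fibonacci triangle for $F^*$, so the claim follows by uniqueness of cofibers once the connecting maps are checked to agree — which again reduces to the same book-keeping on cube faces.
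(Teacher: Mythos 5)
Your proof is correct and takes essentially the same route as the paper: you transpose the continuant cube vertex-by-vertex (using the fact that the right transpose of a counit $c_{(G,G^*)}$ is the unit $e_{(G^*,G^{**})}$) to get the cohomological cube of $F^*$, and then commute $\Tot$ with $(-)^*$ via the fiber/cofiber relation under right adjoints. The paper phrases the last step as applying the $\Fib/\Cof$-transposition lemma iteratively (via Proposition~\ref{prop:Tot(Q)=iterfiber}(a)), whereas you invoke the closed formula of Proposition~\ref{prop:Tot(Q)=iterfiber}(b) directly, but these are the same argument.
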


\noindent{\sl Proof:} Recall that for any natural transformation $u: G\to H$ of $\oo$-functors
we have the {\em right transpose transformation} $u^t: H^*\to G^*$ of their right adjoints,
if the right adjoints exist. 
Further, we have:

\begin{lem}
(a) The cofiber of $u^t$ is the right transpose of the fiber of $u$: 
\[
\Cof\{u^t: H^*\lra G^*\} \,\= \, \Fib\{ u: G\lra H\}^*. 
\]
(b) If $G$ has two adjoints $G^*, G^{**}$, then the unit $e_{(G^*, G^{**})}:  \Id\to G^{**} G^*$
of the adjunction $(G^*, G^{**})$ is the right transpose of the counit $c_{(G, G^*)}: G G^*\to \Id$
of the adjunction $(G, G^*)$.   \qed

\end{lem}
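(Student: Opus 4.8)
Both statements are formal consequences of the homotopy-coherent calculus of adjunctions available in our $(\oo,2)$-categorical setting: passing to right adjoints is functorial, so a composable string of functors admitting right adjoints has as right adjoint the reversed composite of the pieces, a $2$-morphism $u\colon G\to H$ between functors admitting right adjoints has a well-defined mate $u^{t}\colon H^{*}\to G^{*}$, mates compose contravariantly, and the triangle identities hold coherently. The plan is to take these facts as given and unwind the two claims; the only real difficulty is that one needs these identifications at the level of coherent data, not merely pointwise, which is precisely what the cited formalism provides.

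For (a), I would prove the existence of $\Fib\{u\}^{*}$ and the stated formula simultaneously, using mapping spectra. Write $G,H\colon\Cc\to\Dc$, $K=\Fib\{u\}$, and let $\mathrm{map}_{\Cc}(-,-)$, $\mathrm{map}_{\Dc}(-,-)$ denote mapping spectra. Fixing $d\in\Dc$ and applying the exact contravariant functor $\mathrm{map}_{\Dc}(-,d)$ to the fibre sequence $Kc\to Gc\to Hc$ in $\Dc$ gives, for every $c\in\Cc$,
\[
\mathrm{map}_{\Dc}(Kc,d)\,\=\,\Cof\bigl\{\mathrm{map}_{\Dc}(Hc,d)\lra\mathrm{map}_{\Dc}(Gc,d)\bigr\},
\]
the arrow being precomposition with $u_{c}$. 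The adjunction equivalences $\mathrm{map}_{\Dc}(Gc,d)\=\mathrm{map}_{\Cc}(c,G^{*}d)$ and $\mathrm{map}_{\Dc}(Hc,d)\=\mathrm{map}_{\Cc}(c,H^{*}d)$ are natural, and by the defining property of the mate they convert this arrow into postcomposition with $u^{t}_{d}\colon H^{*}d\to G^{*}d$; applying the exact functor $\mathrm{map}_{\Cc}(c,-)$ then identifies the right-hand side with $\mathrm{map}_{\Cc}\bigl(c,\Cof\{u^{t}\}(d)\bigr)$. Hence $\mathrm{map}_{\Dc}(K(-),d)\=\mathrm{map}_{\Cc}(-,\Cof\{u^{t}\}(d))$ naturally in $c$ and $d$, and by Yoneda this exhibits $\Cof\{u^{t}\}$ as a right adjoint of $K$, so $\Fib\{u\}^{*}\=\Cof\{u^{t}\}$. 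Conceptually this is just the statement that $(-)^{*}$, being a contravariant equivalence from functors admitting a right adjoint onto functors admitting a left adjoint, carries the pullback square defining $K$ — a pushout square in the opposite category — to the pushout square exhibiting $\Cof\{u^{t}\}$; I would nonetheless run the mapping-spectrum argument so as to also get existence of the adjoint for free.

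For (b), let $G\colon\Cc\to\Dc$ with adjunctions $(G,G^{*})$, $(G^{*},G^{**})$, units $e_{1},e_{2}$ and counit $c_{1}=c_{(G,G^{*})}$. Since the right adjoint of $GG^{*}$ is $G^{**}G^{*}$ and that of $\Id_{\Dc}$ is $\Id_{\Dc}$, the mate of $c_{1}\colon GG^{*}\to\Id_{\Dc}$ is a transformation $c_{1}^{t}\colon\Id_{\Dc}\to G^{**}G^{*}$, which we must compare with $e_{2}=e_{(G^{*},G^{**})}$. By the description of the mate, $(c_{1}^{t})_{d}$ is the image of $(c_{1})_{d}$ under the adjunction equivalence $\Map_{\Dc}(GG^{*}d,d)\=\Map_{\Dc}(d,G^{**}G^{*}d)$ for the composite adjunction $(GG^{*},G^{**}G^{*})$, and this equivalence factors as the composite of $\Map_{\Dc}(GG^{*}d,d)\=\Map_{\Cc}(G^{*}d,G^{*}d)$ (from $(G,G^{*})$) with $\Map_{\Cc}(G^{*}d,G^{*}d)\=\Map_{\Dc}(d,G^{**}G^{*}d)$ (from $(G^{*},G^{**})$). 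Under the first equivalence $(c_{1})_{d}$ maps to $G^{*}\!\bigl((c_{1})_{d}\bigr)\circ(e_{1})_{G^{*}d}$, which equals $\Id_{G^{*}d}$ by the triangle identity for $(G,G^{*})$; under the second, $\Id_{G^{*}d}$ maps to the unit $(e_{2})_{d}$. Hence $c_{1}^{t}\=e_{2}$, naturally in $d$, which is (b). The step I expect to require the most care is this chase upgraded to coherent data — that the equivalence attached to a composite adjunction really is the coherent composite of the two, and that the triangle identities hold coherently — for which I would invoke the $(\oo,2)$-categorical theory of adjunctions rather than reprove it.
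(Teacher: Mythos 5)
The paper gives no argument for this lemma at all: the \qed inside the statement signals that the authors regard both parts as standard facts about mates/transposes of adjunctions in the stable setting. Your proposal is a correct filling-in of exactly those facts, so there is no conflict with the paper's (nonexistent) proof; if anything, your argument does more, since for (a) it simultaneously establishes that $\Fib\{u\}$ admits a right adjoint at all and identifies it with $\Cof\{u^t\}$ — a point the paper's terse statement quietly presupposes. The mapping-spectrum computation is sound: applying $\mathrm{map}_{\Dc}(-,d)$ to the (co)fiber sequence $Kc\to Gc\to Hc$ gives $\mathrm{map}_{\Dc}(Kc,d)\simeq\Cof\{\mathrm{map}_{\Dc}(Hc,d)\to\mathrm{map}_{\Dc}(Gc,d)\}$, the defining property of the mate converts precomposition by $u_c$ into postcomposition by $u^t_d$, and exactness of $\mathrm{map}_{\Cc}(c,-)$ plus the pointwise representability criterion for adjoints then yields $\Fib\{u\}^*\simeq\Cof\{u^t\}$; your alternative conceptual remark (that $(-)^*$ is a contravariant equivalence onto left-adjointable functors and so exchanges fibers and cofibers) is the argument the authors presumably have in mind, but it needs the existence statement your concrete computation supplies. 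For (b), the factorization of the composite adjunction equivalence through $\Map_{\Cc}(G^*d,G^*d)$ and the triangle identity for $(G,G^*)$ is the standard and correct verification that the transpose of the counit is the unit of the next adjunction. Your explicit flagging that the only genuinely delicate point is upgrading these identifications to coherent data, deferred to the $(\oo,2)$-categorical theory of adjunctions, is the right level of care for a result the paper itself treats as formal.
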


Now applying (b), we see that the  entire Fibonacci cube $\Ec^N(F^{(N)}, \cdots, F^*)$ is the right adjoint
of $\Ec_N(F, \cdots, F^{(N-1)})$. Appling (a) repeatedly, we see that the totalizations of these
adjoint cubes are adjoint. 
Proposition \ref{prop:E_N-dual-E^N} is proved. \qed

\paragraph{Determinantal triangles.} For each $N\geq 1$ we can combine the morphisms from
the Fibonacci triangles \eqref{eq:fib-tri-2} to get a square of functors
\be\label{eq:E-E-square}
\xymatrix{
\EE_{N+1}(F) \EE^{N-1}(F^*)
\ar[d]_{\EE_{N+1}(F)\circ \rho_{N+1}, F^*} 
 \ar[rrr]^{\beta_{N+1, F}\circ \EE^{N-1}(F^*)} &&& \EE_N(F)F^{(N)} \EE^{N-1}(F^*)
 \ar[d]^{\EE_N(F)\circ \pi_{N, F^*}}
\\
\EE_{N+1}(F) F^{(N+1)}\EE^N(F^*) \ar[rrr]_{\alpha_{N+1, F}\circ \EE^N(F^*)} &&& \EE_N(F) \EE^N(F^*)
}
\ee

\begin{prop}\label{prop:E-E-square-com}
The square \eqref{eq:E-E-square} is coherently commutative.
\end{prop}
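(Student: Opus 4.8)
The plan is to deduce \eqref{eq:E-E-square} from the functoriality of the totalization functor together with the fact that, in a stable $\oo$-category, finite limits taken along ``independent'' coordinates of a multi-cube commute.

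First, by \eqref{eq:fib-tri-2} and Proposition \ref{prop:fib-tri-1}, every one of the four morphisms in \eqref{eq:E-E-square} is obtained, by whiskering with a fixed functor, from one of the canonical morphisms $\Tot(Q)\to\Tot(\del_i^0 Q)$ or $\Tot(\del_i^0 Q)\to\Tot(\del_i^1 Q)$ attached via Proposition \ref{prop:Tot(Q)=iterfiber}(a) to a continuant (Fibonacci) cube $Q$ and one of its coordinates $i$: the two horizontal arrows split off a coordinate of a \emph{homological} continuant cube of the adjoint string $F, F^*, F^{**},\dots$, and the two vertical arrows split off a coordinate of a \emph{cohomological} continuant cube of $F^*$. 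The point to exploit is that these two families of coordinates lie in disjoint blocks.

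Next I would assemble the corners. Using that $\Tot$ is compatible with composition of exact functors — the composition bifunctor $\ExFun(\B,\C)\times\ExFun(\A,\B)\to\ExFun(\A,\C)$ is exact in each variable, since pre-composition always preserves pointwise (co)limits and post-composition by an exact functor does as well, so that $\Tot$ of an external composition of cubes is the composition of the two totalizations — I would realize all four corners and all four maps of \eqref{eq:E-E-square} out of external compositions of the relevant homological $F$-cube with the relevant cohomological $F^*$-cube, the horizontal arrows coming from a restriction in the $F$-factor and the vertical arrows from a restriction in the $F^*$-factor. The passage between the cubes computing $\EE_N(F)$, $\EE_{N+1}(F)$, $\EE_{N+2}(F)$, and between those computing $\EE^{N-1}(F^*)$, $\EE^N(F^*)$, is governed by the recursion $\Gamma_n\simeq\chi(\iota_{n-2})$ of \S \ref{subsec:fibcubes}, which is precisely what identifies the relevant face inclusions and the splitting morphisms.

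The proposition then follows from the ``Fubini'' principle underlying Proposition \ref{prop:Tot(Q)=iterfiber}(a): for a multi-cube $Q\colon\2^I\times\2^J\to\C$ with $i\in I$, $j\in J$, the operators $\del_i^\epsilon$ and $\del_j^{\epsilon'}$ commute, $\Tot(Q)$ is computed up to coherent homotopy as the total fiber of the $\2\times\2$-square $(\epsilon,\epsilon')\mapsto\Tot(\del_i^\epsilon\del_j^{\epsilon'}Q)$, and the associated square of comparison morphisms is coherently commutative by the very construction of the total fiber; equivalently, one notes that the first map of a splitting triangle is natural in the cube (Proposition \ref{prop:Tot(Q)=iterfiber}(a)) and applies this naturality to the morphism of $I$-cubes produced by pointwise $J$-totalization. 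I expect the genuine work, and the only real obstacle, to be the bookkeeping in the preceding step: checking that the indicated restrictions really reproduce, on the nose, the functors $\EE_N(F)F^{(N)}$, $\EE_{N+1}(F)F^{(N+1)}$, $\EE_N(F)$, $\EE^{N-1}(F^*)$, $\EE^N(F^*)$, $F^{(N)}\EE^{N-1}(F^*)$, $F^{(N+1)}\EE^N(F^*)$, with the correct insertions of units and counits, and that the resulting splitting morphisms agree with $\beta_{N+1,F}$, $\alpha_{\bullet,F}$, $\rho_{N+1,F^*}$, $\pi_{N,F^*}$ of \eqref{eq:fib-tri-2}; this is a direct, if laborious, unwinding of the definitions of \S \ref{subsec:cont-cubes}. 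Finally, I would remark that a more formal route is available: presenting $\EE_\bullet(F)$, $\EE^\bullet(F^*)$ and the morphisms between them as entries and structure morphisms of a product of the continued-fraction lax matrices \eqref{eq:CF-lax-matrix}, one sees \eqref{eq:E-E-square} as an instance of the coherence square \eqref{eq:coherentsquare} carried by any lax-matrix product, hence coherently commutative by Propositions \ref{prop:comp-2-laxmat} and \ref{prop:comp-n-laxmat}.
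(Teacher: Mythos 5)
Your proposal is considerably more detailed than the paper's, whose entire proof is the sentence ``Follows from coherent compatibility of the units and counits of various adjunctions.'' Your strategy — realize the square as the image under $\Tot$ of two independent face-restrictions of a single product cube, then invoke the Fubini principle underlying Proposition \ref{prop:Tot(Q)=iterfiber}(a) — is in the right spirit, but I see a structural gap at the central step, not merely bookkeeping as you suggest.

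For the Fubini argument to apply, the four corners of \eqref{eq:E-E-square} would have to be $\Tot(\del_i^\eps\del_j^{\eps'}Q)$ for a single cube $Q\colon \2^I\times\2^J\to\ExFun(-,-)$ and a fixed pair $i\in I$, $j\in J$. They are not. The four horizontal/vertical maps come from the Fibonacci triangles of \emph{four distinct continuant cubes}: $\beta_{N+1,F}$ from $\Ec_{N+1}$, $\alpha$ from $\Ec_{N+2}$, $\pi_{N,F^*}$ from $\Ec^{N}$ of $F^*$, and $\rho_{N+1,F^*}$ from $\Ec^{N+1}$ of $F^*$ — cubes whose Fibonacci-poset shapes differ in dimension. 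Correspondingly, the ``middle'' functor appearing between $\EE_\bullet(F)$ and $\EE^\bullet(F^*)$ in the four corners is $\Id$, $F^{(N)}$, $F^{(N+1)}$, $\Id$: it depends on the \emph{diagonal} position of the corner, not on the row and column separately, which is incompatible with the corners being the four faces $\del_i^\eps\del_j^{\eps'}Q$ of one product cube. Concretely, if you try $Q=\Ec_{N+2}(F,\ldots,F^{(N+1)})\circ\Ec^{N+1}(F^{(N+1)},\ldots,F^*)$ and restrict the last coordinate of each factor, the resulting totalizations carry unwanted $F^{(N+1)}$ insertions and produce $\EE_{N+2}(F)$ or $\EE^{N+1}(F^*)$ factors rather than the objects in \eqref{eq:E-E-square}. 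The folding at the apex $F^{(N)}F^{(N-1)}$ (or $F^{(N+1)}F^{(N)}$) of the palindromic string is precisely where the units and counits interact, and a naive external product of the two continuant cubes does not capture it. Your closing remark, that the square can be read off the coherence square \eqref{eq:coherentsquare} of a lax-matrix product, is closer to what the paper actually executes in \S\ref{subsec:N-spher-peri} (Proposition \ref{prop:laxmat-mu} onwards); but the lax matrix there has individual $\EE^\bullet$'s as entries, not products $\EE_\bullet(F)\EE^\bullet(F^*)$, so one would still have to compose it with a second mutation matrix and track the resulting coherence datum — a step you do not carry out.
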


\noindent{\sl Proof:} Follows from coherent compatibility of the units and counits of various adjunctions.
\qed

\vskip .2cm

Denote $a_N: \EE_{N+1}(F) \EE^{N-1}(F^*) \to \EE_N(F) \EE^N(F^*)$ the common composition
of the two paths in  \eqref{eq:E-E-square}. 

\begin{prop}\label{prop:det-triang}
(a)  $\Cof(a_N)$ is identified with $\Id$, the identity functor, so we have a triangle of functors
\[
\EE_{N+1}(F ) \,  \EE^{N-1}(F^*) \buildrel a_N\over \lra
\EE_N(F )\,  \EE^N(F^*)   \lra \Id. 
\]
(b) Similarly,  we have a triangle of functors
\[
\Id   \lra \EE^N(F^*) \,\EE_N(F)\buildrel b_N\over \lra \EE^{N+1}(F) \,\EE_{N-1}(F^*). 
\]
\end{prop}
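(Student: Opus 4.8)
The plan is to deduce Proposition \ref{prop:det-triang} as a categorical lift of the classical continuant identity in Proposition \ref{prop:euler-recurs}(d), upgraded via the machinery of lax matrices developed in \S\ref{subsec:laxmat}. The starting observation is that the Fibonacci triangles \eqref{eq:fib-tri-2}, reorganized, express each $\EE_N(F)$ as a totalization of an $(N-1)$-cube built from units and counits; concatenating such cubes along the variable ranges corresponds, on the decategorified level, to the matrix product in Proposition \ref{prop:euler-recurs}(c). So the statement should follow once we identify the composite $a_N$ in \eqref{eq:E-E-square} with the off-diagonal entry of a product of lax matrices of the mutation type \eqref{eq:CF-lax-matrix}, and recognize that the ``missing'' term of the continuant identity — the $(-q)^N$ with $q=-1$, i.e.\ $(-1)^N\cdot(-1)^N = +1$ — categorifies to the identity functor sitting as the cofiber.

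Concretely, first I would set up the $N$-fold composite of mutation functors: starting from the glued SOD $\A_1\oplus_F\A_0$ and iterating the mutation identifications of Proposition \ref{prop:glue-for-perp} (stable $\oo$-version via \cite{dkss-spher}), one obtains an equivalence whose lax matrix, by Proposition \ref{prop:comp-n-laxmat}, has matrix elements that are totalizations of continuant cubes. The key computation is then to match these totalizations with the functors $\EE_{N+1}(F)$, $\EE_N(F)$, $\EE^{N-1}(F^*)$, $\EE^N(F^*)$ and $\Id$ appearing in the statement; this is the categorical analogue of Proposition \ref{prop:euler-recurs}(c)–(d). The determinantal triangle of part (a) then arises as follows: the product $\EE_{N+1}(F)\,\EE^{N-1}(F^*)$ and $\EE_N(F)\,\EE^N(F^*)$ are, respectively, the two terms on the left-hand side of the categorified identity, $\Id$ is the categorified right-hand side $(-q)^N|_{q=-1}$, and the triangle $\EE_{N+1}(F)\EE^{N-1}(F^*)\to\EE_N(F)\EE^N(F^*)\to\Id$ is obtained by totalizing a $2$-step filtration of an appropriate $N$-cube (applying Proposition \ref{prop:Tot(Q)=iterfiber}(a) to the last coordinate). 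Part (b) follows from part (a) by passing to right adjoints, using Proposition \ref{prop:E_N-dual-E^N} (which gives $\EE^N(F^*)^* \= \EE_N(F)$ and $\EE^{N-1}(F^*)^*\=\EE_{N-1}(F)$, etc.) together with Lemma part (a) in \S\ref{subsec:hi-twist} that exchanges fibers and cofibers under transposition; applying $(-)^*$ to the triangle of (a) and relabelling $F\leftrightarrow F^*$ yields the triangle of (b) with the arrow directions reversed.

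An alternative, more hands-on route avoiding the full mutation apparatus: build the relevant $N$-cube $Q$ directly as the concatenation $\mathbf{e}_?\circ\cdots$ of unit/counit $1$-cubes so that $\Tot(Q)=\EE_N(F)\,\EE^N(F^*)$, exhibit $\EE_{N+1}(F)\,\EE^{N-1}(F^*)$ as $\Tot$ of a subcube (one of the two faces $\del_i^0 Q$, $\del_i^1 Q$ in a chosen coordinate $i$), and then identify the complementary face with the constant cube on $\Id$ — this uses the triangle identities for the adjunction $(F^{*(N-1)}, F^{*N})$, precisely as in the proof of Proposition \ref{prop:euler-recurs}(d), where the cancellation that produces $(-q)^N$ happens at the ``turning point'' of the continued fraction. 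The coherent commutativity needed to make $a_N$ well-defined is Proposition \ref{prop:E-E-square-com}, already granted.

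The main obstacle I expect is the bookkeeping of signs, shifts, and orientations: matching the $[-1]$'s built into the definition of lax matrices (the shift in the first row of Proposition \ref{prop:lax-matrix-expl}(a)) and the shifts in the definitions of $\EE_2(F)$, $\EE^2(F)$ with the exponents $(-1)^{[(N\pm 1)/2]}$ appearing in Proposition \ref{prop:euler-recurs}(c), and verifying that under all these identifications the cofiber of $a_N$ is genuinely $\Id$ and not a shift or a twist of it. This is exactly the place where the decategorified computation in Remark \ref{rem:Euler-det-id} guided the choice of conventions, so I would lean heavily on that computation — and on the explicit small cases $N=1,2,3$ in Examples \ref{exas:fibcubes} and the formulas for $\EE_{\leq 3}$ — to pin down the normalization before writing the general induction.
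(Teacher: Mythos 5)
Your proposal differs substantially from the paper's actual proof. The paper establishes part~(a) by a short induction on $N$: it assembles a $3\times 3$ commutative diagram whose middle row and middle column are the Fibonacci triangles \eqref{eq:fib-tri-2} composed with $\EE_{N+1}(F)$ on one side and $\EE^N(F^*)$ on the other, uses Proposition~\ref{prop:E-E-square-com} to see that the morphism $a_N$ appears as one composite and $a_{N+1}$ as another through the same middle object, and then invokes the octahedral axiom to conclude $\Fib(a_{N+1})\simeq\Fib(a_N)$. Since $\Cof(a_1)=\Id$, the induction closes. This argument sidesteps entirely the "bookkeeping of signs, shifts, and orientations'' that you (correctly) flag as the main obstacle to the lax-matrix route, which is presumably why the authors chose it. The lax-matrix computation you sketch does appear in the paper, but for a different purpose — namely the proof of Theorem~\ref{thm:N-spher=per} in \S\ref{subsec:N-spher-peri} — and does not directly yield a triangle of functors; extracting the determinantal triangle from the $2\times2$ lax matrix of $\mu_N$ would require additional work that is not sketched here. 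Likewise, your alternative ``face of a cube'' route leaves unverified the single crucial fact — that the complementary face totalizes to $\Id$, i.e.\ the categorical lift of the $(-q)^N$ cancellation in Proposition~\ref{prop:euler-recurs}(d) — and this is exactly the content of the proposition, not a side step one can wave at via the ``turning point'' heuristic.

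There is also a concrete error in your deduction of (b) from (a). You cite Proposition~\ref{prop:E_N-dual-E^N} as providing $(\EE^N(F^*))^*\simeq\EE_N(F)$, but the proposition states $\EE^N(F^*)=(\EE_N(F))^*$; this makes $\EE_N(F)$ the \emph{left} adjoint of $\EE^N(F^*)$, not its right adjoint. Applying $(-)^*$ (right adjoints) to the triangle of (a) produces terms such as $(\EE^N(F^*))^*=(\EE_N(F))^{**}$, a higher iterated adjoint that does not simplify to the entries of (b). The paper's route is to take the instance of (a) with $F^*$ in place of $F$ and then pass to \emph{left} adjoints, which is what the adjunction direction in Proposition~\ref{prop:E_N-dual-E^N} actually supports.
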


The triangles from the proposition will be  called {\em determinantal triangles}, since they categorify the
identities among Euler polynomials given in Proposition \ref{prop:euler-recurs}(d) and Remark 
\ref{rem:Euler-det-id}, these identities in turn expressing the fact that the determinants of
certain $2\times 2$ matrices are equal to $1$. 

\vskip .2cm

\noindent {\sl Proof of  Proposition \ref{prop:det-triang}:}  We prove (a), since (b) is then obtained
by invoking  Proposition \ref{prop:E_N-dual-E^N} and passing to the left adjoint of the instance of (a)
for $F^*$ instead of $F$.

\vskip .2cm

To see (b), consider the commutative diagram in which the  $3$-term rows and columns are triangles
and the central row and column are obtained from the Fibonacci triangles:
\[
\xymatrix{
 & \EE_{N+1}(F) \EE^{N-1}(F^*) \ar[d] \ar[r]^{a_N} & \EE_N(F) \EE^N(F^*)\ar[d]^{=}
 \\
 \EE_{N+2}(F) \EE^N(F^*) \ar[d]_{a_{N+1}}
 \ar[r]& \EE_{N+1}(F) F^{(N-1)} \EE^N(F^*)\ar[d] \ar[r]& \EE_N(F) \EE^N(F^*)\ar[d]
 \\
 \EE_{N+1}(F)\EE^{N+1}(F^*) \ar[r]^{=} & \EE_{N+1}(F) \EE^{N+1}(F^*) \ar[r]& 0.
}
\]
The commutativity of this diagram follows from Proposition \ref{prop:E-E-square-com}, since
$a_N$ is obtained as one type of composition while $a_{N+1}$ as the other type. 
Now, the octohedral axiom implies that $\Fib(a_{N+1}) \= \Fib(a_N)$, so $\Cof(a_{N+1}) \= \Cof(a_N)$.
But $\Cof(a_1)=\Id$, so we obtain our statement by induction. \qed


\section{$N$-spherical functors and periodic SODs}\label{sec:N-spher}

\subsection{$N$-spherical functors}\label{subsec:N-spher}

\paragraph{Definition via vanishing of higher (co)twists.} 

\begin{defi}
Let $F: \A_0\to\A_1$ be an exact functor of stable $\oo$-categories and $N\geq 2$. We say that $F$ is
{\em $N$-spherical}, if the adjoints $F^*, \cdots, F^{(N-1)}$ exist and the $(N-1)$th twist and cotwist
of $F$ vanish: $\EE_{N-1}(F) = \EE^{N-1}(F) = 0$. 
\end{defi}

\begin{exas}
A $2$-spherical functor is a zero functor.

A $3$-spherical functor is an equivalence of categories.

A $4$-spherical functor is the same as a spherical functor in the usual sense \cite{AL17, dkss-spher}.
This characterization of usual spherical functors is due to Kuznetsov \cite[Prop. 2.13]{kuznetsov}
who worked in the context of dg-enhancements and Fourier-Mukai functors. His argument adapts
directly to the stable $\oo$-categorical situation. 
\end{exas}

\begin{prop}\label{prop:spher-tw-eq}
If $F$ is $N$-spherical, then $\EE_{N-2}(F)$ and $\EE^{N-2}(F)$ are equivalences of categories.
\end{prop}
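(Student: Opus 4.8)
The plan is to read off the statement from the determinantal triangles of Proposition~\ref{prop:det-triang}, the Fibonacci triangles \eqref{eq:fib-tri-2}, and the adjunction duality of Proposition~\ref{prop:E_N-dual-E^N}; the hypothesis that $F$ is $N$-spherical, i.e. $\EE_{N-1}(F)=\EE^{N-1}(F)=0$, enters only to force certain functors in those triangles to be the zero functor. I would first treat the twist. Instantiating Proposition~\ref{prop:det-triang}(a) at parameter $N-2$ gives a triangle
\[
\EE_{N-1}(F)\,\EE^{N-3}(F^*)\;\lra\;\EE_{N-2}(F)\,\EE^{N-2}(F^*)\;\lra\;\Id ,
\]
whose left-hand term is the zero functor since $\EE_{N-1}(F)=0$, so the triangle exhibits an equivalence $\EE_{N-2}(F)\circ\EE^{N-2}(F^*)\simeq\Id$. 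Dually, Proposition~\ref{prop:det-triang}(b) at parameter $N-2$ reads
\[
\Id\;\lra\;\EE^{N-2}(F^*)\,\EE_{N-2}(F)\;\lra\;\EE^{N-1}(F)\,\EE_{N-3}(F^*),
\]
whose right-hand term vanishes because $\EE^{N-1}(F)=0$, giving $\EE^{N-2}(F^*)\circ\EE_{N-2}(F)\simeq\Id$. Hence $\EE_{N-2}(F)$ and $\EE^{N-2}(F^*)$ are mutually quasi-inverse; in particular $\EE_{N-2}(F)$ is an equivalence.

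For the cotwist the determinantal triangles are less directly useful, since Proposition~\ref{prop:det-triang} is asymmetric in $F$ versus $F^*$. Combining part~(b) at parameter $N-1$ — whose middle functor $\EE^{N-1}(F^*)\EE_{N-1}(F)$ again vanishes, yielding $\EE^{N}(F)\circ\EE_{N-2}(F^*)\simeq\Id[1]$ — with the cotwist Fibonacci triangle \eqref{eq:fib-tri-2} at parameter $N$, which gives $\EE^{N}(F)\simeq\EE^{N-2}(F)[1]$, one still gets $\EE^{N-2}(F)\circ\EE_{N-2}(F^*)\simeq\Id$; but this only shows that $\EE^{N-2}(F)$ is an equivalence iff $\EE_{N-2}(F^*)$ is. The cleanest way I see to finish is to invoke Proposition~\ref{prop:E_N-dual-E^N} in the form $\EE^{N-2}(F)\simeq\bigl(\EE_{N-2}({}^{*}F)\bigr)^{*}$ (apply it with $F$ replaced by the left adjoint ${}^{*}F$, using $({}^{*}F)^{*}=F$): a functor is an equivalence exactly when its right adjoint is, so it suffices to know that ${}^{*}F$ exists, has the iterated adjoints needed to form its higher twists, and is itself $N$-spherical, whereupon the first paragraph applies to ${}^{*}F$.

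The main obstacle is therefore precisely this closure of $N$-sphericity under adjunction. It is exactly the symmetry built into the $N$-periodic SOD picture of Section~\ref{sec:N-spher}: by Theorem~\ref{thm:N-spher=per} the hypothesis is equivalent to $N$-periodicity of the chain of iterated orthogonals $(\dots,{}^{\perp}\A_0,\A_0,\A_0^{\perp},\dots)$ in $\A_0\oplus_F\A_1$, a condition manifestly invariant under shifting the chain by one step, and one such shift replaces the gluing functor $F$ by $F^{*}$ (cf.\ Proposition~\ref{prop:glue-for-perp}); hence $N$-sphericity propagates along the whole adjoint string $\dots,{}^{*}F,F,F^{*},\dots$, supplying the input missing above. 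Equivalently, one can run the entire argument through Theorem~\ref{thm:N-spher=per} and the lax-matrix product of Proposition~\ref{prop:comp-n-laxmat}, categorifying Proposition~\ref{prop:period=E_{N-1}}: the off-diagonal entries of the $N$-fold product of mutation matrices vanish, which is the $N$-sphericity of $F$, while the diagonal entries are the identity up to shift, which is precisely the assertion that $\EE_{N-2}(F)$ and $\EE^{N-2}(F)$ are equivalences.
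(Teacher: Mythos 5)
Your first paragraph reproduces exactly the argument the paper has in mind: specialize both determinantal triangles of Proposition~\ref{prop:det-triang} at parameter $N-2$, kill the outer terms using $\EE_{N-1}(F)=0$ and $\EE^{N-1}(F)=0$, and read off that $\EE_{N-2}(F)$ and $\EE^{N-2}(F^*)$ are mutually inverse. Your scrutiny of the cotwist is also well-founded: the paper's one-line proof (``direct consequence of the determinantal triangles'') glosses over the fact that what those triangles produce directly is $\EE^{N-2}(F^*)$, the \emph{right adjoint} of $\EE_{N-2}(F)$, and not $\EE^{N-2}(F)$ itself, which requires a shift by one step along the adjoint string.

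Your proposed repair via Theorem~\ref{thm:N-spher=per} is correct and not circular (the theorem's proof goes through the lax-matrix calculus and does not invoke the present proposition), so the argument stands. It is, however, heavier than what the one-sentence proof seems to intend, and it appeals to a result established only later in the same section. A more local fix, staying entirely within the ``determinantal triangle'' toolbox, is to use the companion form of Proposition~\ref{prop:E_N-dual-E^N}, namely $\EE_N(G^*)\simeq\bigl(\EE^N(G)\bigr)^*$ (proved by the same transposition argument, applied to the cohomological rather than homological Fibonacci cube). From the hypotheses this yields $\EE_{N-1}(F^*)\simeq\bigl(\EE^{N-1}(F)\bigr)^*=0$ and $\EE^{N-1}(F^*)\simeq\bigl(\EE_{N-1}(F)\bigr)^*=0$, and all terms of the determinantal triangles for $F^*$ at parameter $N-2$ involve only the given adjoints $F^*,\dots,F^{(N-1)}$; they therefore exhibit $\EE_{N-2}(F^*)$ and $\EE^{N-2}(F^{**})$ as mutually inverse equivalences, after which $\EE^{N-2}(F)\simeq{}^*\!\EE_{N-2}(F^*)$ is an equivalence as well, with no need for ${}^*F$ to exist and no appeal to the periodicity theorem. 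Either route closes the gap; you correctly located a genuine elision in the paper's terse proof, but the repair you chose is a detour where a shorter path is available.
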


\noindent{\sl Proof:} This is a direct consequence of the determinantal triangles from Proposition 
 \ref{prop:det-triang}. \qed
 
 \vskip .2cm
 
 Note that for $N$ even $\EE_{N-2}(F): \A_1\to \A_1$ and $\EE^{N-2}(F): \A_0\to \A_0$ are self-equivalences,
 while for $N$ odd they are cross-equivalences $\EE_{N-2}(F): \A_0\to \A_1$, $\EE^{N-2}(F): \A_1\to \A_0$.

 \paragraph{ Various characterizations and features of $N$-spherical functors (list of results).}\label{par:N-spher-list}
 
 We now list various properties of $N$-spherical functors whose proofs, if not provided
 right away, will be given in \S\ref {subsec:N-spher-peri}  \ref{par:N-spher-pf} This postponement will be indicated by the sign $\Diamond$. 
 
 \begin{prop}\label{prop:N-sph-NM-sph}
 Let $F$ be an $N$-spherical functor. Then:
 
 \vskip .2cm
 
 (a) $F$ has all adjoints $F^{(i)}$, $i\in \ZZ$. Each $F^{(i)}$ is $N$-spherical. 
 
 \vskip .2cm
 
 (b) $F$ is $NM$-spherical for any $M\geq 1$. 
 $ \quad\quad \Diamond$
 \end{prop}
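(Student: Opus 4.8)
The plan is to derive both statements from the characterization of $N$-sphericity by $N$-periodicity of the associated glued semi-orthogonal decomposition, i.e.\ from Theorem \ref{thm:N-spher=per}; this is the reason the argument is deferred to \S\ref{subsec:N-spher-peri}, where that theorem is proved. Write $\C = \A_0\oplus_F\A_1$ for the glued category with its SOD $(\A_0,\A_1)$, so that $\A_1 = {}^{\perp}\A_0$ and, whenever it exists, the bi-infinite chain of iterated orthogonals is reindexed as $\cdots,\ \A_0^{\perp(-1)}=\A_1,\ \A_0^{\perp 0}=\A_0,\ \A_0^{\perp 1}=\A_0^{\perp},\ \cdots$. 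Observe first that $\A_0$ is $N$-periodic in $\C$ if and only if $\A_1$ is, since $\A_0^{\perp N}=\A_0 \iff \A_0^{\perp(N-1)}=\A_0^{\perp(-1)} \iff \A_1^{\perp N}=\A_1$.

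For part (a), assume $F$ is $N$-spherical. By Theorem \ref{thm:N-spher=per}, $\A_0$ is $N$-periodic in $\C$; in particular the orthogonals $\A_0^{\perp 1},\dots,\A_0^{\perp(N-1)}$ are admissibly defined and $\A_0^{\perp N}=\A_0$. This closes them into a full $N$-periodic chain $\{\A_0^{\perp i}\}_{i\in\ZZ}$, every term of which coincides with one of the first $N$ and is therefore admissible; hence $\A_0$ is $\oo$-admissible, and all iterated adjoints $F^{(i)}$, $i\in\ZZ$, exist. Now fix $i$. The pair $(\A_0^{\perp i},\A_0^{\perp(i-1)})$ is an SOD of $\C$, and after identifying its two factors with $\A_1$ and $\A_0$ (in the order prescribed by the parity of $i$) through the mutation equivalences, it becomes a glued SOD whose gluing functor is equivalent to $F^{(i)}$: this is the categorical counterpart of Proposition \ref{prop:glue-for-perp} applied $|i|$ times, in the spirit of Corollary \ref{cor:coord-change}, and $N$-sphericity is invariant under composition with equivalences. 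The chain of orthogonals of this new SOD is merely the original chain reindexed, hence still $N$-periodic; applying Theorem \ref{thm:N-spher=per} to $F^{(i)}$ now shows that $F^{(i)}$ is $N$-spherical.

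For part (b), Theorem \ref{thm:N-spher=per} turns the hypothesis into $\A_0^{\perp N}=\A_0$ in $\C$. Iterating, $\A_0^{\perp 2N}=(\A_0^{\perp N})^{\perp N}=\A_0^{\perp N}=\A_0$, and by induction $\A_0^{\perp NM}=\A_0$ for all $M\geq 1$, so $\A_0$ is $NM$-periodic. By part (a) the adjoints $F^{*},\dots,F^{(NM-1)}$ all exist, so the existence clause in the definition of $NM$-sphericity is satisfied, and Theorem \ref{thm:N-spher=per}, read as ``periodic $\Rightarrow$ spherical'', gives that $F$ is $NM$-spherical.

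The substantive input is Theorem \ref{thm:N-spher=per} itself, established separately; within the present proof the only point that requires care is the identification up to equivalence of the gluing functor of the mutated SOD $(\A_0^{\perp i},\A_0^{\perp(i-1)})$ with $F^{(i)}$ — the categorification of the iterated Proposition \ref{prop:glue-for-perp} — together with the routine observation that $N$-periodicity promotes the finite admissibility built into the definition of $N$-sphericity to full $\oo$-admissibility. (One can also argue (b) directly by iterating the Fibonacci triangles \eqref{eq:fib-tri-2} and the determinantal triangles of Proposition \ref{prop:det-triang} to establish an equivalence $\EE_{k+N}(F)\=(\text{equivalence})\circ\EE_k(F)$ for $N$-spherical $F$, whence $\EE_{NM-1}(F)\=(\text{equivalence})\circ\EE_{N-1}(F)=0$; but this is more laborious than the periodicity argument above.)
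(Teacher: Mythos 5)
Your argument is essentially the paper's own: both parts are reduced via Theorem \ref{thm:N-spher=per} to $N$-periodicity of the chain of orthogonals in $\A_0\oplus_F\A_1$, using that periodicity closes the chain over all of $\ZZ$ (whence $\oo$-admissibility and the existence of all $F^{(i)}$), that $F^{(i)}$ is, up to mutation equivalences, the gluing functor of the shifted SOD $(\A_i,\A_{i+1})$, and that an $N$-periodic chain is trivially both $N$-periodic after any shift and $NM$-periodic for all $M\geq 1$. The parenthetical alternative for (b) via the Fibonacci and determinantal triangles is a nice supplementary observation, but the main line of the proposal matches the paper's proof.
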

 
 We have the following analog of the fact that composition with the twist and cotwist of a usual spherical functor
 change left adjoints to right adjoints. 
 
 \begin{prop}
 Let $F$ be $N$-spherical. Then 
 \[
 \EE_{N-2}(F)\, F^{(N-2)} \= \EE_{N-3}(F) \text{ and }
 F^{(N-2)} E^{N-2}(F) \= \EE^{N-3}(F).
 \]
 \end{prop}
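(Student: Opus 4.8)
The plan is to prove both equivalences by invoking the Fibonacci triangles \eqref{eq:fib-tri-2} and the vanishing hypotheses defining $N$-sphericity, then using the fact that a cofiber (or fiber) involving a zero functor is an equivalence onto the remaining term. Concretely, for the first statement I would start from the Fibonacci triangle for $\Ec_N$ with $N$ replaced by $N-1$:
\[
\EE_{N-1}(F) \buildrel \beta_{N-1,F}\over\lra \EE_{N-2}(F)\, F^{(N-2)} \buildrel \alpha_{N-1,F}\over\lra \EE_{N-3}(F).
\]
Since $F$ is $N$-spherical, $\EE_{N-1}(F) = 0$, so $\alpha_{N-1,F}$ exhibits $\EE_{N-2}(F)\, F^{(N-2)}$ as the fiber of a morphism out of $0$'s source term — more precisely, the triangle forces $\EE_{N-2}(F)\, F^{(N-2)} \to \EE_{N-3}(F)$ to be an equivalence. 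That gives the first identification. Dually, for the cotwist statement I would use the second Fibonacci triangle in \eqref{eq:fib-tri-2}, again with $N$ replaced by $N-1$:
\[
\EE^{N-3}(F) \buildrel \rho_{N-1,F}\over\lra F^{(N-2)}\, \EE^{N-1}(F) \buildrel \pi_{N-1,F}\over\lra \EE^{N-1}(F),
\]
and since $\EE^{N-1}(F) = 0$ by $N$-sphericity, the map $\rho_{N-1,F}$ is an equivalence, yielding $F^{(N-2)}\, \EE^{N-2}(F) \cong \EE^{N-3}(F)$ — wait, I must be careful here: the triangle as written has $\EE^{N-1}(F)$ in two spots, so I should re-index correctly, taking the second triangle at index $N-1$, which reads $\EE^{N-3}(F) \to F^{(N-2)}\EE^{N-2}(F) \to \EE^{N-1}(F)$, and then vanishing of the last term gives the claim.

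The key steps, in order, are: (1) write down the two Fibonacci triangles from \eqref{eq:fib-tri-2} with the index shifted to $N-1$; (2) substitute $\EE_{N-1}(F) = \EE^{N-1}(F) = 0$, which holds by the definition of $N$-sphericity; (3) observe that in a stable $\oo$-category a triangle $0 \to X \to Y$ forces $X \to Y$ to be an equivalence, and likewise $X \to Y \to 0$ forces $X \to Y$ to be an equivalence; (4) read off the two stated equivalences. I would also remark that Proposition \ref{prop:spher-tw-eq} already tells us $\EE_{N-2}(F)$ and $\EE^{N-2}(F)$ are themselves equivalences, which is consistent with and reinforces this statement (it says the "twist composed with top adjoint" collapses to the next lower twist).

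The only genuine subtlety — and the step I would be most careful about — is bookkeeping the indices and the directions of the adjoints: the Fibonacci triangles in \eqref{eq:fib-tri-2} are stated for the twist tower of a fixed $F$, and I need the instance with parameter $N-1$ rather than $N$, so the functor appearing in the middle term is $F^{*(N-2)} = F^{(N-2)}$, matching the statement exactly. There is no real obstacle beyond this indexing check; once the triangles are correctly instantiated, the argument is immediate from the vanishing hypothesis. One should note that the adjoints $F^{(N-2)}$, $F^{(N-3)}$ exist by Proposition \ref{prop:N-sph-NM-sph}(a) (or simply because $N$-sphericity presupposes $F^*,\dots,F^{(N-1)}$ exist), so all the functors written down are well-defined.
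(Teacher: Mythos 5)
Your proof is correct and follows exactly the same route as the paper, which simply says the statement follows from the Fibonacci triangles \eqref{eq:fib-tri-2}: instantiate both triangles at index $N-1$, use the vanishing $\EE_{N-1}(F)=\EE^{N-1}(F)=0$, and read off that the remaining maps are equivalences. Your self-correction on the indexing of the middle term in the second triangle arrives at the right instance, $\EE^{N-3}(F)\to F^{(N-2)}\EE^{N-2}(F)\to \EE^{N-1}(F)$, matching the paper's argument.
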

 
 \noindent{\sl Proof:} Follows at once for the Fibonacci triangles \eqref{eq:fib-tri-2}. \qed
 
 \vskip .2cm
 
For $N=4$ (a usual spherical functor) we have $\EE_1(F)=\EE^1(F) = F$, so the statements become
 \[
 \EE_2(F) \, F^{**} \,\=\, F, \quad F^{**} \, \EE^2(F) \,\=\, F. 
 \]
 
 A situation of natural interest is when the category $\A_0$ is ``simple'' (e.g., coincides with
  the $\oo$-enhancement
 of the derived category of vector spaces) while $\A_1$ is ``complicated'' and we want to
 construct  a self-equivalence of $\A_0$ as $\EE_{N-2}(F)$ for an $N$-spherical functor $F: \A_0 \to \A_1$.
 For this $N$ must be even. In this case we give an alternative criterion for $N$-sphericity which
 uses only functors $\A_0\to \A_1$ and so is easier to check.
 
 \begin{prop}\label{prop:spher=triang}
 Let $N$ be even. Then the following properties of $F: \A_0\to\A_1$ are equivalent:
 \begin{itemize}
 \item[(i)] $F$ is $N$-spherical.
 
 \item[(ii)] $\EE^{N-2}(F): \A_0 \to \A_0$ is an equivalence while $\EE^{N-1}(F): \A_0 \to \A_1$ is zero.   $ \quad\quad \Diamond$
 \end{itemize}
 
 \end{prop}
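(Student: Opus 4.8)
The implication (i) $\Rightarrow$ (ii) is immediate: if $F$ is $N$-spherical then $\EE^{N-1}(F)=0$ by definition, while $\EE^{N-2}(F)$ is an equivalence by Proposition \ref{prop:spher-tw-eq}, and for $N$ even this equivalence is a self-equivalence of $\A_0$. So the substance of the statement is the converse, (ii) $\Rightarrow$ (i): from the two hypotheses on $\EE^{N-1}(F)$ and $\EE^{N-2}(F)$ we must extract the vanishing of the remaining twist $\EE_{N-1}(F)$ (and, with it, the existence of the last adjoint $F^{(N-1)}$ that makes full $N$-sphericity meaningful).

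The plan is to route (ii) $\Rightarrow$ (i) through the periodic-SOD characterization, Theorem \ref{thm:N-spher=per}: it suffices to prove that for $\C=\A_0\oplus_F\A_1$ with its SOD $(\A_0,\A_1)$ the chain of iterated orthogonals is $N$-periodic, i.e. $\A_0^{\perp(N-1)}={}^\perp\A_0=\A_1$. This is the categorical lift of the sufficiency half of Proposition \ref{prop:period=E_{N-1}}, and, exactly as there, for $N$ \emph{even} a single inclusion together with invertibility of one diagonal entry is enough. Concretely I would iterate the mutation equivalences $\C\simeq\A_0^{\perp(i)}\oplus_{F^{(i)}}\A_0^{\perp(i-1)}$ and express the resulting automorphism of $\C$ by a lax matrix via Proposition \ref{prop:comp-n-laxmat}: just as the product $W_1\cdots W_N$ of the matrices \eqref{eq:CF-lax-matrix} in the proof of Proposition \ref{prop:period=E_{N-1}} has entries $E_{N-2},E_{N-1},E_{N-1},E_N$ up to isometries, the four entries of this lax matrix are the continuant functors $\EE_{N-2}(F)$, $\EE_{N-1}(F)$, $\EE^{N-1}(F)$, $\EE_N(F)$, each pre- and post-composed with (composites of) mutation equivalences. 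The hypothesis $\EE^{N-1}(F)=0$ then makes the lax matrix block triangular, which by the usual reading of \eqref{eq:laxmatrix}--\eqref{eq:coherentsquare} means the automorphism carries $\A_0$ into ${}^\perp\A_0$ after the identifications, hence $\A_0^{\perp(N-1)}\subseteq{}^\perp\A_0$; and the hypothesis that $\EE^{N-2}(F)$ is an equivalence says the surviving diagonal entry between these two copies of the ``$\A_0$-side'' is an equivalence, which upgrades the inclusion to an equality $\A_0^{\perp(N-1)}={}^\perp\A_0$ (this also yields admissibility of the intervening orthogonals, hence existence of $F^{(N-1)}$). Theorem \ref{thm:N-spher=per} then gives that $F$ is $N$-spherical, so in particular $\EE_{N-1}(F)=0$ and (i) holds.

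As a preliminary sanity check one gets part of the way purely formally, and it is worth recording: applying the determinantal triangle of Proposition \ref{prop:det-triang}(b) with index $N-2$ and using $\EE^{N-1}(F)=0$ gives $\EE^{N-2}(F^*)\circ\EE_{N-2}(F)\simeq\Id$, and since $\EE^{N-2}(F^*)\simeq(\EE_{N-2}(F))^*$ by Proposition \ref{prop:E_N-dual-E^N}, this unit equivalence shows $\EE_{N-2}(F)$ is fully faithful; Proposition \ref{prop:det-triang}(a) with the same index then identifies the fiber of the counit $\EE_{N-2}(F)(\EE_{N-2}(F))^*\to\Id$ with $\EE_{N-1}(F)\circ(\EE_{N-3}(F))^*$, so $\EE_{N-1}(F)=0$ would follow once one knew that $\EE_{N-2}(F)$ is actually an equivalence and $\EE_{N-3}(F)$ is invertible. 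I do not see how to obtain essential surjectivity of $\EE_{N-2}(F)$, or the invertibility of $\EE_{N-3}(F)$, from the second hypothesis without feeding it in structurally --- which is precisely what the periodic-SOD route does --- so the main obstacle I expect is the bookkeeping in that route: matching the four lax-matrix entries with the continuant functors $\EE_{\bullet}(F),\EE^{\bullet}(F)$ up to the mutation equivalences (the categorical analogue of combining Proposition \ref{prop:euler-recurs}(c) with the iteration of Proposition \ref{prop:glue-for-perp}), and, in the absence of a dimension count, checking that a block-triangular lax matrix with invertible diagonal entry for an equivalence of glued SODs genuinely forces equality of the relevant orthogonal subcategories. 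The restriction to even $N$ enters exactly here: for odd $N$ one would additionally need the two sides to ``have the same size'', which has no automatic categorical counterpart.
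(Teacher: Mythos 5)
Your implication (i) $\Rightarrow$ (ii) and your strategy for (ii) $\Rightarrow$ (i) --- passing through Theorem \ref{thm:N-spher=per} and the lax matrix of the mutation composite $\mu_N$ --- coincide with the paper's own proof in \S\ref{par:N-spher-pf}. But the step you yourself flag as ``the main obstacle'' is a genuine gap that you do not close: from the lax matrix being block-triangular ($\phi_{01}=0$) with invertible diagonal entry $\phi_{00}$ you must conclude that it is block-diagonal, and you note that the dimension count of the decategorified Proposition \ref{prop:period=E_{N-1}} is unavailable without offering a substitute. The paper's substitute is short but essential: $\mu_N$ is itself an equivalence of stable $\oo$-categories, and once $\phi_{01}=0$ together with $\phi_{00}$ an equivalence shows that $\mu_N$ restricts to an equivalence from $\A_0$ onto the left summand of the target SOD, the fact that $\mu_N$ is an ambient equivalence forces it to carry $\A_1={}^\perp\A_0$ onto the left orthogonal of that target summand, i.e.\ onto the target's right summand --- because orthogonality is expressed by contractibility of mapping spaces, which any equivalence preserves. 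This kills the remaining off-diagonal entry $\phi_{10}=\EE^{N-1}(F^*)=(\EE_{N-1}(F))^*$, giving $\EE_{N-1}(F)=0$. The missing idea is thus not bookkeeping but precisely the observation that equivalences preserve orthogonals, which plays the role the dimension count played downstairs.

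Two secondary inaccuracies, independent of the gap: the paper identifies the lax matrix entries of $\mu_N$ as $\EE^{N-2}(F^{**}),\EE^{N-1}(F^{**}),\EE^{N-1}(F^*),\EE^N(F^*)$ --- all cohomological cotwists of iterated right adjoints of $F$ --- not the mixed list $\EE_{N-2}(F),\EE_{N-1}(F),\EE^{N-1}(F),\EE_N(F)$ you wrote; the hypothesis $\EE^{N-1}(F)=0$ yields $\EE^{N-1}(F^{**})=0$ only after invoking Proposition \ref{prop:E_N-dual-E^N} and its left-adjoint counterpart. And with only the right adjoints $F^*,\dots,F^{(N-1)}$ available, the iterable admissible chain goes by successive \emph{left} orthogonals $\A_0,\A_1={}^\perp\A_0,\dots,\A_N={}^{\perp N}\A_0$ (as in the theorem), not the right orthogonals $\A_0^{\perp(i)}$ you invoke, whose admissibility would require left adjoints of $F$ that are not assumed.
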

 
 
 \subsection{Periodic SODs}\label{subsec:N-spher-peri}
 
 \paragraph{The statement of result.} 
 
 Let $F: \A_0\to \A_1$ be an exact functor of stable $\oo$-categories. As explained in Proposition \ref{prop:F=*epssb.epsa},
 it gives to a new stable $\oo$-category $\A = \A_0\oplus_F \A_1$ with a semi-orthogonal decomposition $(\A_0, \A_1)$ so that,
 in particular, $\A_1 = {^\perp \A_0}$ and $\A_0=\A_1^\perp$.

\begin{thm}\label{thm:N-spher=per}
 Let $N\geq 2$ and  assume that $F$ has iterated adjoints $F^*, \cdots, F^{N-1}$. 
Then the following are equivalent:
\begin{itemize}
\item[(i)] $F$ is an $N$-spherical functor.

\item[(ii)] The chain of orthogonals $\A_0, \A_1= {^\perp \A}, \A_2:=  {^{\perp\perp}\A}, \cdots$ in $\A_0\oplus_F\A_1
$ is $N$-periodic, 
i.e., $\A_N:= {^{\perp N}\A_0}$ coincides with $\A_0$. 
\end{itemize}
\end{thm}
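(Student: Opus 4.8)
The plan is to categorify the proof of Proposition~\ref{prop:period=E_{N-1}}: the coordinate-change matrices $W_i$ there get replaced by lax matrices (\S\ref{subsec:laxmat}), and the continuant identity of Proposition~\ref{prop:euler-recurs}(c) gets replaced by Proposition~\ref{prop:comp-n-laxmat} together with the continuant cubes of \S\ref{subsec:cont-cubes}. Write $\A=\A_0\oplus_F\A_1$ and $\A_k={}^{\perp k}\A_0$; the hypothesis on iterated adjoints makes each $\A_k$ ($0\le k\le N$) admissible and each $(\A_k,\A_{k+1})$ an SOD of $\A$ whose gluing functor, after the mutation identifications recalled in the introduction, is $F^{(k)}$. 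The central object is $\id_\A$, regarded as an exact functor from $\A$ carrying the SOD $(\A_0,\A_1)$ to $\A$ carrying the SOD $(\A_N,\A_{N+1})$: concretely it is the composite of the $N$ mutation equivalences $\A_{k-1}\oplus_{F^{(k-1)}}\A_k\to\A_k\oplus_{F^{(k)}}\A_{k+1}$, each of which, by Proposition~\ref{prop:laxmat-mu} (cf.\ \eqref{eq:CF-lax-matrix}), is represented by the lax matrix $\left(\begin{smallmatrix}0&\Id\\ \Id&F^{(k)}\end{smallmatrix}\right)$, the sign in \eqref{eq:CF-matrix} being absorbed into the shift built into the top row of a lax matrix. (This uses the adjoint $F^{(N)}$; it exists as soon as the chain is $N$-periodic, and the resulting mild circularity is harmless, only finitely many adjoints ever occurring.)

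First I would compute the lax matrix $\|\phi_{ij}\|$ of this $\id_\A$ by feeding the product $\left(\begin{smallmatrix}0&\Id\\ \Id&F^{*}\end{smallmatrix}\right)\cdots\left(\begin{smallmatrix}0&\Id\\ \Id&F^{(N)}\end{smallmatrix}\right)$ into Proposition~\ref{prop:comp-n-laxmat}. Inserting the triangles $D_\nu$ of \eqref{eq:triangle-delta} between consecutive factors and expanding as in the combinatorial proof of Proposition~\ref{prop:euler-recurs}(c) --- the identity entries of the mutation matrices account for ``passing'' a twin, the non-trivial unit/counit data for ``contracting'' one, so the nonzero contributions to each matrix entry are indexed by cotwinned subsets --- one identifies the $(N-1)$-cubes produced by Proposition~\ref{prop:comp-n-laxmat} with the continuant cubes of \S\ref{subsec:cont-cubes}. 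This should give, up to shift, off-diagonal entries $\phi_{10}\simeq\EE_{N-1}(F^{*})$ and $\phi_{01}\simeq\EE_{N-1}(F^{**})$ and diagonal entries which are twists of orders $N-2$ and $N$, exactly paralleling the way $E_{N-1}$ occupies the off-diagonal of the matrix in Proposition~\ref{prop:euler-recurs}(c) while $E_{N-2}$ and $E_N$ occupy the diagonal. Carrying out this identification --- matching the unit/counit insertions defining the continuant cubes against the data of the mutation matrices, and keeping track of all the $[-1]$'s --- is the main obstacle; I expect to do it by induction on $N$, peeling off the last factor, applying Proposition~\ref{prop:comp-2-laxmat}, and using the Fibonacci recursion behind Proposition~\ref{prop:fib-tri-1}.

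Next I would show that $N$-periodicity of the chain is equivalent to $\phi_{10}\simeq\phi_{01}\simeq 0$. By Proposition~\ref{prop:lax-matrix-expl}(a), $\phi_{10}\simeq{}^{*}\eps_{\A_N}\circ\eps_{\A_1}$ and $\phi_{01}\simeq\eps_{\A_{N+1}}^{*}\circ\eps_{\A_0}[-1]$. If $\phi_{10}\simeq 0$ then $\Map_\A(b,x)\simeq\Map_{\A_N}\!\big({}^{*}\eps_{\A_N}\eps_{\A_1}(b),\,x\big)$ is contractible for all $b\in\A_1$, $x\in\A_N$, so $\A_1\subseteq{}^{\perp}\A_N=\A_{N+1}$; dually, $\phi_{01}\simeq 0$ forces $\A_{N+1}\subseteq{}^{\perp}\A_0=\A_1$; hence $\A_{N+1}=\A_1$ and $\A_N=\A_{N+1}^{\perp}=\A_1^{\perp}=\A_0$, i.e.\ the chain is $N$-periodic. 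The converse is immediate: $\A_N=\A_0$ gives $\A_{N+1}=\A_1$, and then $\phi_{10}\simeq{}^{*}\eps_{\A_0}\eps_{\A_1}\simeq 0$ and $\phi_{01}\simeq\eps_{\A_1}^{*}\eps_{\A_0}\simeq 0$ since $(\A_0,\A_1)$ is an SOD. Note that no dimension count enters, unlike in Proposition~\ref{prop:period=E_{N-1}}; and when $\phi_{10}\simeq\phi_{01}\simeq 0$ the diagonal entries are automatically equivalences, being those of a diagonal lax matrix representing the equivalence $\id_\A$, which re-derives Proposition~\ref{prop:spher-tw-eq}.

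Finally I would match the two vanishings with $N$-sphericity. By Proposition~\ref{prop:E_N-dual-E^N} and its evident left-adjoint counterpart one has $\EE_{N-1}(F^{*})\simeq(\EE^{N-1}(F))^{*}$ and $\EE_{N-1}(F^{**})\simeq(\EE^{N-1}(F^{*}))^{*}\simeq(\EE_{N-1}(F))^{**}$; since a functor vanishes iff its adjoint does, the condition $\phi_{10}\simeq\phi_{01}\simeq 0$ is therefore equivalent to $\EE^{N-1}(F)=\EE_{N-1}(F)=0$, i.e.\ to $F$ being $N$-spherical. Together with the previous paragraph this yields the equivalence (i)$\Leftrightarrow$(ii). (As a consistency check: running the same computation with $(\A_0,\A_1)$ replaced by $(\A_{k-1},\A_k)$, whose gluing functor is $F^{(k-1)}$, shows the chain is $N$-periodic iff $\EE_{N-1}(F^{(i)})=\EE^{N-1}(F^{(i)})=0$ for every $i\in\ZZ$, which recovers the content of Proposition~\ref{prop:N-sph-NM-sph}(a); this is not needed for the proof itself.)
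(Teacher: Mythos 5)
Your argument follows the same route as the paper's proof: build up the composite mutation coordinate change $\mu_N$ as a product of the elementary lax matrices of Proposition~\ref{prop:laxmat-mu}, identify the resulting matrix entries (by induction via Proposition~\ref{prop:comp-2-laxmat}) with totalizations of continuant cubes, and translate vanishing of the off-diagonal entries into $N$-periodicity of the chain of orthogonals. The only minor discrepancy is that the paper's explicit computation gives the off-diagonal entries as the cotwists $\EE^{N-1}(F^*)$ and $\EE^{N-1}(F^{**})$ rather than the twists $\EE_{N-1}(F^*)$ and $\EE_{N-1}(F^{**})$ you predict, but since these differ from your candidates only by passing to adjoints (Proposition~\ref{prop:E_N-dual-E^N}) the translation into the condition $\EE_{N-1}(F)=\EE^{N-1}(F)=0$ and hence the equivalence (i)$\Leftrightarrow$(ii) is unaffected.
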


\noindent The proof will occupy the rest of this subsection until \S  \ref{par:N-spher-pf}

 \paragraph{The coordinate change lax matrix associated to a mutation.} We recall some finer features of
 the theory of SODs from \cite [\S 2.2.-2.5]{dkss-spher}. 
 
 Without assuming the existence of any adjoints,
 the standard SOD $(\A_0, \A_1)$ of $\A = \A_0\oplus_F\A_1$ is coCartesian, in terminology of
 \cite[Def. 2.2.8]{dkss-spher}. In particular, $(\A_1, \A_2 = {^\perp \A}_1)$ is again an SOD, now Cartesian,
 and we have the mutation equivalence $\rho_{\A_1}: \A_0\to \A_2$. 
 
 \vskip .2cm
 
 Assume now that $F$ has a right adjoint $F^*$. Then $(\A_1, \A_2)$ is also coCartesian
 \cite[Prop. 2.5.1]{dkss-spher} and so has a coCartesian gluing functor $\Phi: \A_1\to\A_2$ giving
 an identification of $\A$ with $\A_1\oplus_\Phi \A_2$. 
 Further, $\Phi\rho_{\A_1}$ is identified with $F^*$, so $\A_1\oplus_\Phi \A_2$, being obviously identified
 with $\A_1\oplus_{\Phi\rho_{\A_1}} \A_0$, is now identified with $\A_1\oplus_{F^*} \A_0$. 
 The composite equivalence
 \be
 \mu_{F^*}: \A_0\oplus_F \A_1 \lra \A \lra \A_1\oplus_{F^*}\A_0
 \ee
 will be called the {\em mutation coordinate change} (for $F^*$). 
 
 \vskip .2cm
 
 Recall that objects of $\A_0\oplus_F \A_1)$ have the form $(a_0, a_1, \eta: F(a_0)\to a_1)$,
 while objects of $\A_1\oplus_{F^*} \A_0$ have the form $(a'_1, a'_0, \xi: F^*(a'_1)\to a'_0)$, 
 with $a_i, a'_i\in\A_i$. In this notation,
 \be\label{eq:mu-F-formula}
 \mu_{F^*} \bigl(a_0, a_1,  f(a_0) \buildrel\eta\over\to a_1\bigr)\,=\,
 \biggl( a_1, \Cof\bigl(a_0\buildrel\eta'\over\to F^*(a_1)\bigl),  F^*(a_1)\buildrel {\on{can}}\over\to \Cof \bigl(a_0\buildrel
 \eta'\over\to F^*(a_1)\bigr)\biggr), 
 \ee
 where $\eta'$ corresponds to $\eta$ by adjunction and $\on{can}$ is the canonical morphism of the target of
 $\eta'$ to its cofiber.  Since $\mu_F$ is a functor between two catgories with semi-orthogonal decompositions,
 we can apply the formalism of lax matrices from \S \ref{subsec:laxmat}, and  \ref{eq:mu-F-formula}
 implies at once:
 
 \begin{prop}\label{prop:laxmat-mu}
 The equivalence $\mu_{F^*}$ 
 is represented by the lax matrix
	\[
		 \begin{pmatrix}
		 \xymatrix{
			0  \ar[r]\ar[d] & \Id_{\A_0}\ar[d]^{e}
			\\
			\Id_{\A_1} \ar[r]_{\Id}& F^*
			}
	  \end{pmatrix}.
	\]
	Here, using the conventions of \eqref{eq:naturalmatrix},  and the arrow labelled $e$ represents the unit transformation
	$e: \Id_{\A_0}\to F^* F$ and the arrow labelled $\Id$ represents the transformation $\Id_{F^*}: F^*\to F^*$. 
  \qed
	
	\end{prop}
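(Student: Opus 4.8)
The plan is to read off the four matrix entries and the connecting natural transformations directly from the object-level formula \eqref{eq:mu-F-formula}, feeding it into the general description of lax matrices in Proposition \ref{prop:lax-matrix-expl}. Write $\B_0 = \A_1$ and $\B_1 = \A_0$ for the two summands of the target $\A_1\oplus_{F^*}\A_0$, with gluing functor $G = F^*\colon \B_0\to\B_1$ and embeddings $\eps_{\B_0},\eps_{\B_1}$ as in \eqref{eq:eps-a-eps-b} (the existence of $\mu_{F^*}$ being already established above). By Proposition \ref{prop:lax-matrix-expl}(a) the entries of the lax matrix of $\mu_{F^*}$ are the composites $\big({^*\eps}_{\B_0}\circ\mu_{F^*}\circ\eps_{\A_0}\big)[-1]$, $\big(\eps_{\B_1}^*\circ\mu_{F^*}\circ\eps_{\A_0}\big)[-1]$, ${^*\eps}_{\B_0}\circ\mu_{F^*}\circ\eps_{\A_1}$ and $\eps_{\B_1}^*\circ\mu_{F^*}\circ\eps_{\A_1}$, the shift in the first row being precisely the one absorbing the classical minus sign, as noted after Proposition \ref{prop:lax-matrix-expl}. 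Since, by Proposition \ref{prop:F=*epssb.epsa}(b) applied to the target SOD $(\B_0,\B_1)$, the functors ${^*\eps}_{\B_0}$ and $\eps_{\B_1}^*$ evaluated on an object $(b_0,b_1,\xi)$ of $\A_1\oplus_{F^*}\A_0$ just return $b_0$ and $b_1$ respectively, everything reduces to evaluating $\mu_{F^*}$ on the two objects $\eps_{\A_0}(a_0)=(a_0,0,0)$ and $\eps_{\A_1}(a_1)=(0,a_1,0)$.

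Next I would plug these into \eqref{eq:mu-F-formula}. For $(a_0,0,0)$ the adjoint map is $\eta'=0$ and $F^*(0)=0$, so $\mu_{F^*}(a_0,0,0)=\big(0,\,\Cof(a_0\to 0),\,\on{can}\big)=(0,\,a_0[1],\,0)=\eps_{\B_1}(a_0)[1]$; hence after the shift $[-1]$ the first row of the matrix is $(0,\ \Id_{\A_0})$. For $(0,a_1,0)$ we have $a_0=0$, so $\Cof(0\to F^*(a_1))=F^*(a_1)$ and $\on{can}=\id$, giving $\mu_{F^*}(0,a_1,0)=\big(a_1,\,F^*(a_1),\,\id\big)$; hence the second row is $(\Id_{\A_1},\ F^*)$. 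This already produces the matrix $\|\phi_{ij}\|$ in the statement.

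Finally I would identify the four natural transformations \eqref{eq:naturalmatrix}. By Proposition \ref{prop:lax-matrix-expl}(b) the two vertical arrows $\phi_{0j}\Rightarrow\phi_{1j}\circ F$ are induced by the unit $\Id\to\eps_{\A_1}({^*\eps}_{\A_1})$ of the source SOD and the two horizontal arrows $G\circ\phi_{i0}\Rightarrow\phi_{i1}$ by $\delta_\B$ together with the counit ${^*\eps}_{\B_1}\circ\eps_{\B_1}\to\Id$ of the target SOD. Because $\phi_{00}=0$, the two transformations emanating from it are the zero maps $0\Rightarrow F$ and $0\Rightarrow\Id_{\A_0}$; the only nontrivial content sits in the edge $\phi_{01}\Rightarrow\phi_{11}\circ F$, i.e.\ $\Id_{\A_0}\Rightarrow F^*F$, and the edge $G\circ\phi_{10}\Rightarrow\phi_{11}$, i.e.\ $F^*\Rightarrow F^*$. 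Unwinding the recipe of Proposition \ref{prop:lax-matrix-expl}(b)—precomposition with the identity functor does nothing, and the instance of $\delta_\B$ involved is the one coming from the counit applied to an object of the $\B_1$-part—the former edge becomes exactly the adjunction unit $e\colon\Id_{\A_0}\to F^*F$ and the latter becomes $\id_{F^*}$; the coherence square \eqref{eq:coherentsquare} is then automatic, three of its four edges being trivial. The one point requiring genuine care throughout is the convention bookkeeping: interpreting the source as an oplax colimit and the target as a lax limit is what produces the shift by $[-1]$ in the top row and forces $\phi_{01}\Rightarrow\phi_{11}\circ F$ to be the honest unit rather than a desuspended variant—and this is precisely what makes the matrix come out in the clean form \eqref{eq:CF-lax-matrix}, with the classical minus sign of \eqref{eq:CF-matrix} silently absorbed.
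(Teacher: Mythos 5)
Your proof is correct and follows the same route the paper takes: the paper's own justification is the one-line remark that "\eqref{eq:mu-F-formula} implies [the matrix] at once," and what you have done is carry out exactly that computation — evaluating \eqref{eq:mu-F-formula} on $\eps_{\A_0}(a_0)=(a_0,0,0)$ and $\eps_{\A_1}(a_1)=(0,a_1,0)$, reading off the four entries via Proposition \ref{prop:lax-matrix-expl}(a), and identifying the two nontrivial connecting transformations as $e$ and $\id_{F^*}$ via Proposition \ref{prop:lax-matrix-expl}(b). The careful tracking of where the $[-1]$ enters and why it makes $\phi_{01}=\Id_{\A_0}$ rather than a shift is exactly the right thing to check.
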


	\paragraph{Product of mutation matrices.} Assume now that $F$ has adjoints $F^*, \cdots, F^{(N)}$.
	We then have equivalences
	\[
	\A_0\oplus_F\A_1\buildrel \mu_{F^*}\over\lra \A_1\oplus_{F^*}\A_0
	\buildrel \mu_{F^{**}}\over\lra \A_0\oplus_{F^{**}} \A_1 \buildrel \mu_{F^{***}}\over\lra\cdots 
	\buildrel \mu_{F^{(N)}}\over\lra \A_{\ol N} \oplus_{F^{(N)}}\A_{\ol{N+1}}.
	\]
	Let $\mu_N =\mu_{N, F^*} =  \mu_{F^{(N)}}\circ\cdots\circ \mu_{F^*}$ be the composite equivalence. 
	
	To formulate the next statement, let 
	\[
	\pi_N=\pi_{N,F}: F^{(N-1)} \EE^{N-1}(F) \lra \EE^N(F)
	\]
	be the second map in the second Fibonacci triangle in \eqref{eq:fib-tri-2}. Let also
	\[
	e_N= e_{N,F}: \EE^{N-1}(F^{**}) \lra \EE^N(F^*) F
	\]
	be  induced by the morphism of the corresponding Fibonacci cubes which takes
	the vertex $F^I = F^{(i_1)} \cdots F^{(i_p)}$ for a cotwinned $I=\{i_1 >\cdots > i_p \geq 2\}$
	to the vertex  $F^{(i_1)} \cdots F^{(i_p)}F^* F$ by composing with the unit $e: \Id_{\A_0} \to F^* F$.

	\begin{prop}
	The lax matrix of $\mu_N$ has the form
	\[
	\begin{pmatrix}
	\xymatrix{
	\phi_{00} = \EE^{N-2}(F^{**}) \ar[rr]^{\pi_{N-1, F^{**}}} \ar[d]_{e_{N-1, F}} && \phi_{01} = 
	\EE^{N-1}(F^{**})\ar[d] ^{e_{N,F}}
	\\
	\phi_{10} = \EE^{N-1}(F^*) \ar[rr] _{\pi_{N, F^*}}&& \phi_{11} = \EE^N(F^*)
	} 
	\end{pmatrix},
	\]
	 and the  arrows (natural transformations)  from \eqref{eq:naturalmatrix} are as follows:
	\[
\begin{gathered}
\phi_{00} = \EE^{N-2}(F^{**}) \Rightarrow   \phi_{10}\circ F = \EE^{N-1}(F^*)F \text{ is given by } e_{N-1, F}; 
\\  
\phi_{01} = \EE^{N-1}(F^{**}) \Rightarrow   \phi_{11}\circ F = \EE^N(F^*)F  \text{ is given by } e_{N, F}; 
\\
F^{(N)} \circ \phi_{00} = F^{(N)} \EE^{N-2}(F^{**}) \Rightarrow \phi_{01} = \EE^{N-1}(F^{**})  \text{ is given by }  
\pi_{N-1, F^{**}}; 
\\
F^{(N)} \phi_{10} = F^{(N)} \EE^{N-1}(F^*) \Rightarrow \phi_{11} = \EE^N(F^*)  \text{ is given by } \pi_{N, F^*}. 
\end{gathered} 
\]
	\end{prop}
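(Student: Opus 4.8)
\noindent\emph{Proof plan.} I would argue by induction on $N$, with the lax--matrix composition formula (Proposition~\ref{prop:comp-2-laxmat}) applied to the factorisation $\mu_N=\mu_{F^{(N)}}\circ\mu_{N-1}$ as the engine. By Proposition~\ref{prop:laxmat-mu} (with $F$ replaced by $F^{(N-1)}$), the mutation $\mu_{F^{(N)}}$ is represented by the lax matrix
\[
\begin{pmatrix} 0 & \Id\\ \Id & F^{(N)}\end{pmatrix},
\]
the only nontrivial arrow being the unit $e\colon\Id\to F^{(N)}F^{(N-1)}$. The first step is to record the simplification forced by the vanishing $(0,0)$-entry: multiplying an arbitrary lax matrix $\|\phi_{ij}\|$ on the left by this matrix via Proposition~\ref{prop:comp-2-laxmat} makes all four cofibers collapse, so the product $\mu_{F^{(N)}}\circ\mu_{N-1}$ has entries
\[
\phi_{01},\qquad \phi_{11},\qquad \Cof\{\phi_{00}\to F^{(N)}\phi_{01}\},\qquad \Cof\{\phi_{10}\to F^{(N)}\phi_{11}\}
\]
in positions $(0,0)$, $(1,0)$, $(0,1)$, $(1,1)$ respectively. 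This is exactly the categorical shadow of the classical product formula of Proposition~\ref{prop:euler-recurs}(c); the two collapsed corners are the structural reason continuants, rather than generic noncommutative polynomials, appear. (Alternatively one could run the argument in one shot via Proposition~\ref{prop:comp-n-laxmat}, identifying the cubes $Q_{ij}$ there with the continuant cubes $\Ec^{N-2}$, $\Ec^{N-1}$, $\Ec^{N}$.)

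Feeding in the inductive hypothesis that $\mu_{N-1}$ has matrix $\bigl(\begin{smallmatrix}\EE^{N-3}(F^{**}) & \EE^{N-2}(F^{**})\\ \EE^{N-2}(F^{*}) & \EE^{N-1}(F^{*})\end{smallmatrix}\bigr)$, I get the $(0,0)$- and $(1,0)$-entries $\EE^{N-2}(F^{**})$ and $\EE^{N-1}(F^{*})$ on the nose, while the cotwist Fibonacci triangle \eqref{eq:fib-tri-2}, written as $\EE^{k-2}(G)\to G^{(k-1)}\EE^{k-1}(G)\to\EE^{k}(G)$, takes care of the other two: with $G=F^{**}$, $k=N-1$ (so $G^{(k-1)}=F^{(N)}$) it identifies the $(0,1)$-entry with $\EE^{N-1}(F^{**})$, and with $G=F^{*}$, $k=N$ (again $G^{(k-1)}=F^{(N)}$) it identifies the $(1,1)$-entry with $\EE^{N}(F^{*})$. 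The base case is Proposition~\ref{prop:laxmat-mu} itself, matched to the claimed shape using the low--degree conventions for $\EE^{\bullet}$ and the $[-1]$-shift carried by the first row of any lax matrix.

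It then remains to pin down the natural transformations. The two horizontal arrows of the product matrix are, by the very construction of the $(0,1)$- and $(1,1)$-entries as the above cofibers, the canonical maps onto those cofibers, i.e.\ precisely $\pi_{N-1,F^{**}}\colon F^{(N)}\EE^{N-2}(F^{**})\to\EE^{N-1}(F^{**})$ and $\pi_{N,F^{*}}\colon F^{(N)}\EE^{N-1}(F^{*})\to\EE^{N}(F^{*})$ from the Fibonacci triangles. For the vertical arrows I would follow, through Proposition~\ref{prop:comp-2-laxmat}(i) and the explicit edge description in Proposition~\ref{prop:comp-n-laxmat}, how the unit $e\colon\Id\to F^{(N)}F^{(N-1)}$ of $\mu_{F^{(N)}}$ travels across the composite; unwinding the definition of the continuant cube $\Ec^{N}$ shows it becomes exactly the map $e_{N,F}$ (resp.\ $e_{N-1,F}$) obtained by postcomposing the vertices $F^{I}$ of the cube with the unit $e\colon\Id_{\A_0}\to F^{*}F$, and coherence of the square \eqref{eq:coherentsquare} for the product follows from that of the two factors.

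\medskip

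\noindent\emph{Where the difficulty lies.} Computing the four entries is routine Fibonacci bookkeeping. The real obstacle is the last paragraph: showing that the natural transformations manufactured by iterated lax--matrix multiplication (assembled from units of adjunctions and the triangles \eqref{eq:triangle-delta}) agree \emph{coherently}, not merely objectwise, with the explicitly defined maps $\pi_{\bullet}$ and $e_{\bullet}$ of the continuant cubes. I would handle this by noting that both families are forced by the same underlying units $\Id\to F^{(k+1)}F^{(k)}$ together with the canonical structure morphisms of the cubes $\Ec^{k}$, and then invoking the uniqueness inherent in the $(\infty,2)$-categorical description of lax matrices recalled in \S\ref{subsec:laxmat}.
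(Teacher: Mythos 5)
Your proposal takes essentially the same approach as the paper: induction on $N$ via the factorisation $\mu_N=\mu_{F^{(N)}}\circ\mu_{N-1}$ (the paper phrases the step as $\mu_{N+1}=\mu_{F^{(N+1)}}\circ\mu_{N,F^*}$, but this is the same move), apply Proposition~\ref{prop:comp-2-laxmat}, use the vanishing $(0,0)$-entry and identity entries of the single-mutation matrix from Proposition~\ref{prop:laxmat-mu} to simplify the four cofibers, and then identify the two surviving nontrivial cofibers with $\EE^{N-1}(F^{**})$ and $\EE^{N}(F^{*})$ via the cohomological Fibonacci triangle \eqref{eq:fib-tri-2}. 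Your closing remarks on the coherence of the natural-transformation data are apt: the paper's own proof dispatches exactly this step with ``calculated in a similar way. We omit further details,'' so you have correctly located where the residual work lies rather than found a gap in your own argument.
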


\noindent{\sl Proof:} We proceed by induction, assuming the statement proved for given $N$. Then we
apply Proposition \ref{prop:comp-2-laxmat} to the three SODs given by $\A_0, \A_1, F$, then
 $\B_0=\A_{\ol N}$, $\B_1 = \A_{\ol {N+1}}$, $G= F^{(N)}$ and $\C_0=\A_{\ol{N+1}}$, $\C_1=\A_{\ol{N+2}} = \A_{\ol{N}}$, $H= F^{(N+1)}$
 and the functors $\phi= \mu_{N, F^*}$ and $\psi = \mu_{F^{(N+1)}}$. The  lax matrix for $\mu_{N+1, F^*}$ is then given as the product
 \[
  \begin{pmatrix}
		 \xymatrix{
			\psi_{00}=0  \ar[r]\ar[d] &\psi_{01}=  \Id_{\A_{\ol N}}\ar[d]^{e}
			\\
			\psi_{10}=\Id_{\A_{\ol{N+1}}} \ar[r]_{\Id}& \psi_{11}= F^{(N+1)} 
			}
	  \end{pmatrix}
	  \begin{pmatrix}
	\xymatrix{
	\phi_{00} = \EE^{N-2}(F^{**}) \ar[rr]^{\pi_{N-1, F^{**}}} \ar[d]_{e_{N-1, F}} && \phi_{01} = 
	\EE^{N-1}(F^{**})\ar[d] ^{e_{N,F}}
	\\
	\phi_{10} = \EE^{N-1}(F^*) \ar[rr] _{\pi_{N, F^*}}&& \phi_{11} = \EE^N(F^*). 
	} 
	\end{pmatrix}
	 \]
We find, first of all, that since $\psi_{00}=0$ and $\psi_{10}=\Id$, 
\[
(\psi\phi)_{00} = \phi_{01} = \EE^{N-1}(F^{**}), \quad (\psi\phi)_{10} = \phi_{11} = \EE^N(F^*).
\]	 
Next, since $\psi_{01}=\Id$ as well, we find
\[
(\psi\phi)_{01} = \Cof\{u_{01}: 	\psi_{01} \phi_{00} \to \psi_{11} \phi_{01}\} \,=\,\Cof \bigl\{ \EE^{N-2}(F^{**}) \to F^{(N+1)} \EE^{N-1}(F^{**})\bigr\} \,=\,
\EE^N(F^*)
\]	
by an instance (for $N-1$ instead of $N$ and $F^*$ instead of $F$)
of the second  Fibonacci triangle  \eqref{eq:fib-tri-2}, the arrow $u_{01}$ being the
left arrow of that triangle. The identification $(\psi\phi)_{11} = \EE^{N+1}(F^*)$ is another instance of the same triangle. 	
The arrows connecting the new matrix elements, are calculated in a similar way. We omit further details. \qed



\paragraph{Proof of Theorem  \ref{thm:N-spher=per}.}\label{par:proof-thm-per}
Suppose $F$ is $N$-spherical, i.e., $\EE_{N-1}(F)=\EE^{N-1}(F)=0$. Then $\EE^{N-1}(F^*) = \EE_{N-1}(F)^*=0$
and $\EE^{N-1}(F^{**}) = \EE^{N-1}(F)^{**}=0$, so the lax matrix of $\mu_N$ is block-diagonal. 
But up to multiplication with block-diagonal matrices of (mutation) equivalences, the matrix for $\mu_N$
is the coordinate change matrix from the SOD $(\A_0, \A_1)$ to the SOD $(\A_N, \A_{N+1})$, where,
we recall, $\A_i = {^\perp\A}_{i-1}$.  This means that $\A_N=\A_0$ and $\A_{N+1} = \A_1$, i.e., the
SOD $(\A_0, \A_1)$ is $N$-periodic.

Conversely, suppose $(\A_0, \A_1)$ is $N$-periodic. Then, by the above reasoning, the matrix for
$\mu_N$ is block-diagonal, so the vanishing of the off-diagonal elements implies, by passing to
appropriate adjoints, that $\EE_{N-1}(F)=\EE^{N-1}(F)=0$. \qed

\paragraph{Proofs of statements from \S \ref{subsec:N-spher}\ref{par:N-spher-list} }\label{par:N-spher-pf}

{\sl Proof of Proposition \ref{prop:N-sph-NM-sph}: } (a) If $F$ is $N$-spherical, then the SOD $(\A_0, \A_1)$
of $\A=\A_0\oplus_F\A_1$ is $N$-periodic. Therefore, $(\A_i = {^{\perp i} \A}_0, \A_{i+1} =  {^{\perp (i+1)} \A}_0)$
which we know being an SOD for $0\leq i\leq N-1$, is an SOD for any $i\in \ZZ$. This implies 
\cite[Prop. 2.5.1]{dkss-spher} that $F$ has adjoints $F^{(i)}$ for any $i\in\ZZ$ and $F^{(i)}$ is, up to
composing with mutation equivalences, a coCartesian gluing functor for $(\A_i, \A_{i+1})$. Since each
$(\A_i, \A_{i+1})$ is periodic, $F^{(i)}$ is spherical as well. 

(b) Follows by observing that an $N$-periodic SOD is $NM$-periodic for any $M\geq 1$.  \qed

\vskip .2cm

\noindent{\sl Proof of Proposition \ref{prop:spher=triang}: }  (i)$\Rightarrow$(ii) follows from Proposition 
\ref{prop:spher-tw-eq}. Conversely, suppose (ii) holds. Then the equivalence $\mu_N: \A_0\oplus_F\A_1\to\A_{\ol N}
\oplus_{F^{(N)}} \A_{\ol{N+1}}$ has the lax matrix block triangular. In other words, $\mu_N$ takes
$\A_0$ into  $\A_{\ol N}$ via an equivalence. But, since $\mu_N$ is an equivalence, it should then take
$\A_1= {^\perp A}_0$  to $\A_{\ol{N+1}} = {^\perp \A}_{\ol N}$ via an equivalence. This means that the
matrix of $\mu_N$ is block diagonal. Vanishing of both off-diagonal elements is equivalent to $F$ being $N$-spherical,
 as we saw in \S 
\ref{par:proof-thm-per}  \qed


\section{Examples}\label{sec:ex}

\subsection{Quivers of Dynkin type $\Att$}
\label{subsec:quivera}

Let $\D$ be a stable $\infty$-category, let $\Att_n$ be the quiver
\[
	\begin{tikzcd} 
		\bullet_1 \ar{r} & \bullet_2 \ar{r} & \dots \ar{r} & \bullet_n,
	\end{tikzcd}
\]
and define the stable $\infty$-category $\C = \Fun(\Att_n,\D)$, where we consider
 $\Att_n$ as the simplicial set 
\[
	\Delta^1 \amalg_{\Delta^{0}} \Delta^1 \dots \amalg_{\Delta^{0}} \Delta^1.
\]
In particular, the objects of $\C$ are given by diagrams of the form
\[
	\begin{tikzcd} 
		X_1 \ar{r} & X_2 \ar{r} & \dots \ar{r} & X_n
	\end{tikzcd}
\]
in $\D$. We consider full subcategories
\[
	\A = \{ X \to 0 \to \dots \to 0 \} \subset \C
\]
as well as the full subcategory 
\[
	\B = \{ 0 \to X_2 \to X_3 \to \dots \to X_n \} \subset \C.
\]

\begin{prop}\label{prop:typea} The pair $(\A,\B)$ is a $2(n+1)$-periodic semiorthogonal decomposition of $\C$. 
\end{prop}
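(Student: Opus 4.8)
The plan is to identify the gluing functor of the SOD $(\A, \B)$ and show it is $2(n+1)$-spherical, then invoke Theorem \ref{thm:N-spher=per}. First I would check that $(\A, \B)$ really is an SOD of $\C = \Fun(\Att_n, \D)$: the subcategory $\A \simeq \D$ (via $X \mapsto (X \to 0 \to \dots \to 0)$) is admissible, with $^\perp \A = \B$, since a diagram $(X_1 \to \dots \to X_n)$ sits in a functorial fiber sequence whose $\B$-part is $(0 \to X_2 \to \dots \to X_n)$ (restrict along the inclusion $\Att_{n-1} \hookrightarrow \Att_n$ on the last $n-1$ vertices and extend by zero) and whose $\A$-part is the total fiber of that restriction map placed in position $1$. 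Thus $\C \simeq \A \oplus_F \B$ for the coCartesian gluing functor $F = {}^*\eps_\B \circ \eps_\A[-1] \colon \A \to \B$. A direct computation from the definition of $\eps_\A$ and the left adjoint ${}^*\eps_\B$ identifies $F$ with the functor $\D \to \Fun(\Att_{n-1}, \D)$, $X \mapsto (X[-1] \to 0 \to \dots \to 0)$ — i.e. $F$ is again the "same shape" of functor, one size down, up to a shift.

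Next I would compute the iterated adjoints $F^*, F^{**}, \dots$ and hence the higher twists $\EE_k(F)$, $\EE^k(F)$. The key structural fact is that $\C = \Fun(\Att_n, \D)$ is a "fractional Calabi–Yau" type situation: the category $\Fun(\Att_n, \D)$ over a fixed $\D$ carries a Serre-type autoequivalence (this is the stable-$\infty$ analog of the classical AR-translation / Coxeter functor for type $A_n$ quivers), and the chain of orthogonals of $\A$ is governed by it. Concretely, I expect that the mutation autoequivalence $\rho$ appearing in the chain $\A_0 = \A, \A_1 = \B, \A_2, \dots$ is, up to shift, the Coxeter/Auslander–Reiten functor of the $A_n$ quiver, whose order (again up to shift — i.e. the fractional CY property) is $n+1$. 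Tracking the shift, one gets that $\A_k$ returns to $\A_0$ precisely after $2(n+1)$ steps: the factor $2$ accounts for the alternation of the two "sides" $\A_{\bar k}$, and the $n+1$ is the Coxeter number $h$ of $A_n$. Equivalently: $\EE_{2(n+1)-1}(F) = \EE^{2(n+1)-1}(F) = 0$, while lower twists are nonzero, so $F$ is exactly $2(n+1)$-spherical and not of smaller even period.

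The cleanest route to the periodicity count avoids computing each $F^{(i)}$ by hand: instead iterate the mutation coordinate change. By Proposition \ref{prop:laxmat-mu} and the product-of-mutation-matrices computation in \S\ref{subsec:N-spher-peri}, the lax matrix of $\mu_{N}$ is block-diagonal exactly when $\EE^{N-1}(F^*) = \EE^{N-1}(F^{**}) = 0$, which by Theorem \ref{thm:N-spher=per} is equivalent to $\A_N = \A_0$. So it suffices to show the chain of iterated left orthogonals of $\A$ inside $\Fun(\Att_n,\D)$ has period exactly $2(n+1)$. Each mutation step corresponds to "rotating" the diagram shape: passing from $\A$ to $\B$ drops vertex $1$, and the subsequent orthogonals cyclically shift which vertex is "special" while accumulating a shift by $[-1]$ each full time one wraps around all $n$ arrows. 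After $n+1$ full rotations the diagram has returned to its original shape but has picked up an overall shift by $[-n-1]$ or similar; I would pin down the exact shift by testing on the generating object $(X \to 0 \to \dots \to 0)$ and its images, reducing to a concrete bookkeeping of fibers and shifts in $\D$. The main obstacle I anticipate is precisely this shift-tracking: getting the power of $[-1]$ right so that the true period is $2(n+1)$ rather than some divisor or multiple of it — in particular ruling out a smaller even period requires exhibiting an object whose orbit under the mutation functor genuinely has length $2(n+1)$, which one does by following $(X \to 0 \to \dots \to 0)$ for an arbitrary nonzero $X \in \D$ (assuming $\D \neq 0$) around the full cycle. Once the period is confirmed to be exactly $2(n+1)$, Theorem \ref{thm:N-spher=per} gives the proposition.
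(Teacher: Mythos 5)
The paper's proof is a direct, fully explicit computation: it lists, in order, all $2(n+1)$ of the iterated semiorthogonal summands $\A^{(i)}, \B^{(i)}$ (spelled out for $n=3$, with closed formulas for general $n$), and observes that after $2(n+1)$ steps one is back at $(\A, \B)$. Your proposal takes a different and more indirect route, and has several gaps worth flagging.

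First, the central structural claim you lean on — that $\Fun(\Att_n, \D)$ is "fractional Calabi--Yau" via a Serre-type autoequivalence — does not apply in the generality of the proposition. Proposition~\ref{prop:typea} is stated for an arbitrary stable $\oo$-category $\D$, which need not be $\k$-linear or proper, so $\Fun(\Att_n,\D)$ need not admit a Serre functor at all. The paper does make exactly the argument you sketch, but only in \S\ref{subsec:fra-CY}, after explicitly specializing to $\D = D^b(\Vect_\k)$, where the relation $\tau^{n+1}\simeq[-2]$ and Proposition~\ref{prop:serre-properties} are available. It is offered there as "a more abstract explanation" for Proposition~\ref{prop:typea} in a special case, precisely because it does not prove the proposition as stated. (There is still a Coxeter-type autoequivalence on $\Fun(\Att_n,\D)$ for general $\D$ — the paper's proof ends by identifying one of the twist autoequivalences as $\tau$ — but its relation to the chain of orthogonals must be established directly, not via Serre duality.)

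Second, your identification of the gluing functor is incorrect. You predict $F\colon \D\to\Fun(\Att_{n-1},\D)$, $X\mapsto (X[-1]\to 0\to\dots\to 0)$. Already for $n=2$ this gives $F=[-1]$, whereas a direct computation of $F = {}^*\eps_\B\circ\eps_\A[-1]$ shows $F=\id_\D$. In general the correct answer is the \emph{constant} diagram $X\mapsto (X=X=\dots=X)$ (the projective cover at the first vertex), since it must corepresent evaluation at the first vertex of $\B$. This is an easy-to-make sign/shift error, but it signals that the "concrete bookkeeping of fibers and shifts" you defer is genuinely the content of the proof and cannot be elided.

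Third, you spend effort worrying about whether the period is \emph{exactly} $2(n+1)$ rather than a proper divisor. By the definition adopted in the paper (\S\ref{subsec:peri-orth-decat}, and the discussion preceding Theorem~\ref{thm:N-spher=per}), "$N$-periodic" simply means $\A^{\perp(N)}=\A$, so proving periodicity at $2(n+1)$ is all that is required; the divisor question is moot.

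In the end your "cleanest route" — iterate the mutations and track what happens — is exactly what the paper does, but the paper actually carries it out. As written, your plan identifies the right circle of ideas but does not contain a proof for general $\D$, and the path through Theorem~\ref{thm:N-spher=per} offers no shortcut: verifying $N$-sphericity of $F$ and verifying $N$-periodicity of the orthogonals are equivalent, and the latter is what one computes directly.
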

\begin{proof}
	We provide an explicit description of the mutations of the SOD $(\A,\B)$ directly verifying
	the claimed periodicity. To simplify notation, we will refer to the various semiorthogonal
	summands by specifying their objects which define them as full subcategories. We fully spell
	out the case $n=3$ and then explain how it generalizes (for the benefit of readability).

	\[
	\begin{array}{lllll}
				(\A,\B)& = & (\{ X \to 0 \to 0 \}& , & \{ 0 \to X_2 \to X_3 \})\\
				(\B, \A')& = & (\{0 \to X_2 \to X_3\} & , & \{X_1 \overset{\simeq}{\to}
				X_2 \overset{\simeq}{\to} X_3 \})\\
				(\A', \B') &=&(\{X_1 \overset{\simeq}{\to} X_2 \overset{\simeq}{\to}
			X_3 \}&,& \{X_1 \to X_2 \to 0 \})\\
				(\B', \A'') &=&(\{X_1 \to X_2 \to 0 \}&,& \{0 \to 0 \to X\})\\
				(\A'', \B'') &=&(\{ 0 \to 0 \to X \}&,& \{X_1 \to X_2
					\overset{\simeq}{\to} X_3 \})\\
				(\B'', \A^{(3)}) &=&(\{X_1 \to X_2 \overset{\simeq}{\to} X_3 \}&,&
				\{0 \to X \to 0\})\\
				(\A^{(3)}, \B^{(3)}) &=& (\{0 \to X \to 0\}&,& \{X_1
					\overset{\simeq}{\to} X_{2} \to X_3 \})\\
				(\B^{(3)}, \A^{(4)}) &=& (\{X_1 \overset{\simeq}{\to} X_2 \to X_{3}
			\}&,& \{X \to 0 \to 0\})\\
				(\A^{(4)}, \B^{(4)}) &=&(\{X \to 0 \to 0\}&,&\{X_1 \overset{\simeq}{\to} X_2 \overset{\simeq}{\to} X_{3} \})
	\end{array}
	\]

	The general case is completely analogous but with semiorthogonal summands
	\[
		\A^{(i)} = \begin{cases}  \{X_1 \overset{\simeq}{\to} X_2 \overset{\simeq}{\to}
			\dots \overset{\simeq}{\to} X_n \} & \text{for $i=1$,}\\
			\{0 \to \dots \to 0 \to X_{n-i+2} \to 0 \to \dots \to 0\} & \text{for $i>1$,}
		\end{cases}
	\]
	and 
	\[
		\B^{(i)} = \begin{cases}  \{X_1 \to X_2 \to \dots \to X_{n-1} \to 0\} & \text{for $i=1$,}\\
			\{X_1 \to \dots \to X_{n-i+1} \overset{\simeq}{\to} X_{n-i+2} \to \dots \to X_{n} \} & \text{for $i>1$.}
		\end{cases}
	\]
\end{proof}

\

We conclude by describing the higher spherical twists. These autoequivalences of $\A$ and $\B$ are
obtained as the composites of the mutation equivalences
\[
	\A \overset{\simeq}{\to} \A^{(1)} \overset{\simeq}{\to} \A^{(2)} \dots  \overset{\simeq}{\to} \A^{(n+1)}
\]
and analogously for $\B$. Here the mutation equivalence $\A \to \A^{(1)}$ is given by the formula
\[
	a \mapsto \fib(a \to a_!)
\]
in $(\A,\B)$. We display the computations for $n=3$, for $\A$:
\[
	\begin{array}{lll}
		X \to 0 \to 0  & &  \in \A \\
		X \to X \to X  & = \fib((X \to 0 \to 0) \to (0 \to X[1] \to X[1])) & \in \A^{(1)}\\
		0 \to 0 \to X  & = \fib((X \to X \to X) \to (X \to X \to 0)) & \in \A^{(2)}\\
		0 \to X[-1] \to 0  & = \fib((0 \to 0 \to X) \to (0 \to X \to X)) & \in \A^{(3)}\\
		0 \to X[-1] \to 0  & = \fib((0 \to X[-1] \to 0) \to (X[-1] \to X[-1] \to 0)) & \in \A^{(3)}\\
		X[-2] \to 0 \to 0  & = \fib((0 \to X[-1] \to 0) \to (X[-1] \to X[-1] \to 0)) & \in \A^{(4)}
	\end{array}
\]
so that $T_{\A} = [-2]$. For $\B$, we have 
\[
	\begin{array}{lll}
		0 \to X_2 \to X_3  & &  \in \B \\
		X_3[-1] \to X_3/X_2[-1] \to 0  & = \fib((0 \to X_2 \to X_3) \to (X_3 \to X_3 \to X_3)) & \in \B^{(1)}\\
		X_3[-1] \to X_3/X_2[-1] \to X_3/X_2[-1] & = \fib((X_3[-1] \to X_3/X_2[-1] \to 0) \to (0 \to 0 \to X_3/X_2)) & \in \B^{(2)}\\
		X_3[-1] \to X_3[-1] \to X_3/X_2[-1]  & = \fib((X_3[-1] \to X_3/X_2[-1] \to X_3/X_2[-1]) \to (0 \to X_2
		\to 0)) & \in \B^{(3)}\\
		0 \to X_3[-1] \to X_3/X_2[-1]  & = \fib((X_3[-1] \to X_3[-1] \to X_3/X_2[-1]) \to (X_3[-1] \to 0 \to 0)) & \in \B^{(4)}\\
	\end{array}
\]
so that $T_{\B}$ is given by 
\[
	(X_2 \to X_3) \mapsto (X_3[-1] \to X_3/X_2[-1])
\]
which is precisely the (inverse) Coxeter functor $\tau$. This statement holds for arbitrary $n$. 


\subsection{Quivers of Dynkin type $\Dtt$}
\label{subsec:quiverd}

Let $\D$ be a stable $\infty$-category and let $\Dtt_n$ ($n \ge 4$) be the quiver
\[
	\begin{tikzcd} 
		& & & & \bullet_{n-1}\\
		\bullet_1 \ar{r} & \bullet_2 \ar{r} & \dots & \bullet_{n-2} \ar{ur} \ar{dr} &  \\
		& & & & \bullet_n.
	\end{tikzcd}
\]
We set $\C = \Fun(\Dtt_n,\D)$ where we consider $\Dtt_n$ as a simplicial set in analogy to \S
\ref{subsec:quivera}. Consider the full subcategories
\[
	\A = \left\{ \begin{tikzcd}[sep=small, row sep=.5ex] 
		& & & & 0\\
		X \ar{r} & 0 \ar{r} & \dots & 0 \ar{ur} \ar{dr} &  \\
		& & & & 0
	\end{tikzcd} \right\} \subset \C
\]
and 
\[
	\B = \left\{ \begin{tikzcd}[sep=small, row sep=.5ex]  
		& & & & X_{n-1}\\
		0 \ar{r} & X_2 \ar{r} & \dots & X_{n-2} \ar{ur} \ar{dr} &  \\
		& & & & X_n.
	\end{tikzcd} \right\} \subset \C
\]

\begin{prop}\label{prop:typed} The pair $(\A,\B)$ defines an $2(n-1)$-periodic SOD of $\C$.
\end{prop}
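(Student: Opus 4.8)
The plan is to follow the proof of Proposition~\ref{prop:typea}: write out the complete chain of iterated mutations of $(\A,\B)$ explicitly and check that it closes up after $2(n-1)$ steps. First I would note that $(\A,\B)$ is an SOD of $\C$: evaluation at the source vertex $\ev_1\colon\C\to\D$ is left adjoint to the fully faithful embedding $\A\hookrightarrow\C$ (identifying $\A$ with $\D$ by extension by zero), since $\Map_\C(c,\eps_\A X)\simeq\Map_\D(\ev_1 c,X)$; hence $(\A,{}^\perp\A)$ is an SOD and ${}^\perp\A=\ker(\ev_1)=\B$. Both summands are admissible --- they are the functor categories on the sub-quivers $\{\bullet_1\}$ and $\Dtt_n\setminus\{\bullet_1\}$ --- so all iterated orthogonals and mutations exist, $\C\simeq\A\oplus_F\B$ for the gluing functor $F\colon\A\to\B$ of Proposition~\ref{prop:F=*epssb.epsa}, and by Theorem~\ref{thm:N-spher=per} the assertion is equivalent to $F$ being $2(n-1)$-spherical.

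I would prove periodicity directly. Write $\A=\A^{(0)}$, $\B=\B^{(0)}={}^\perp\A$, and inductively $\A^{(i)}={}^\perp\B^{(i-1)}$, $\B^{(i)}={}^\perp\A^{(i)}$, so that $\A^{(i)}$, $\B^{(i)}$ are the iterated left orthogonals of $\A$ at positions $2i$, $2i+1$. As in \S\ref{subsec:quivera} I would give closed-form descriptions of $\A^{(i)},\B^{(i)}\subset\C$ as explicit classes of $\Dtt_n$-diagrams and compute the successive mutation equivalences by the fibre formula $a\mapsto\fib(a\to a_!)$ in the current SOD. Starting from $\A^{(0)}=\{X\to 0\to\cdots\to 0\}$, the first mutation produces the ``constant'' diagram (every vertex, both tails included, equal to $X$ via identity maps), just as in type $\Att$; each subsequent mutation then moves the support one vertex closer to $\bullet_1$ and records a shift, the genuinely new feature being the passage through the trivalent vertex $\bullet_{n-2}$, where a diagram supported on a single leg is replaced by a two-legged diagram carrying prescribed quasi-isomorphisms (and conversely). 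Counting steps, the support returns to $\bullet_1$ after exactly $n-1$ pairs of mutations, so $\A^{(n-1)}=\A$ and $\B^{(n-1)}=\B$, which is the claimed $2(n-1)$-periodicity; along the way one reads off, as in type $\Att$, that the induced autoequivalence of $\A$ is a shift and that of $\B$ is the (inverse) Coxeter, i.e.\ Auslander--Reiten, functor $\tau$ of $\Dtt_n$.

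The main obstacle, I expect, is the bookkeeping at the branch vertex $\bullet_{n-2}$. In type $\Att$ every mutated summand is supported on a sub-interval of the quiver, so checking that the left orthogonal of each listed diagram is again on the list and that the fibre construction returns the next entry is routine; in type $\Dtt$ the summands near the branch are two-legged, and one must track both legs and the quasi-isomorphisms between consecutive $X_j$'s that pin each summand down. I would split the induction into two regimes --- before and after the support reaches $\bullet_{n-2}$ --- and verify the orthogonality and fibre computations separately in each. An alternative shortcut, valid when $\D$ is proper over a field: then $\C=\Fun(\Dtt_n,\D)$ inherits a Serre functor (cf.\ \S\ref{subsec:fra-CY}) and is fractionally Calabi--Yau of dimension $\tfrac{n-2}{n-1}$, which forces every SOD of $\C$ to have period dividing $2(n-1)$ --- already enough for the proposition --- and the general case would then follow by a base-change argument, since the higher twists of $F$ are built from $F$ by (co)limits.
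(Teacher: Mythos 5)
Your main argument---listing the chain of iterated mutations of $(\A,\B)$ as explicit classes of $\Dtt_n$-diagrams and checking that it closes up after $n-1$ double mutations---is exactly the route the paper takes; the paper writes out the table for $n=4$ and states that the general case is a straightforward modification, and your description of the orbit (vertex~$1$, then the constant diagrams, then the support walking from the branch vertex back to vertex~$1$) matches. The identification of $\B={}^\perp\A=\ker(\ev_1)$ via the adjunction $\ev_1\dashv\eps_\A$ is also correct.

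The proposed shortcut via the Serre functor is flawed for $n$ odd. As recorded in \S\ref{subsec:fra-CY}, the Coxeter functor of $\Dtt_n$ satisfies $\tau^{n-1}\simeq[-1]$ only when $n$ is even; for $n$ odd one has $\tau^{n-1}\simeq\theta[-1]$ with $\theta$ the reflection autoequivalence swapping the two tails, so $\Fun(\Dtt_n,D^b(\Vect_\k))$ is \emph{not} fractionally Calabi--Yau of dimension $\tfrac{n-2}{n-1}$---only $S^{2(n-1)}$ is a shift. In particular it is false that every SOD has period dividing $2(n-1)$: the paper exhibits an SOD of period $4(n-1)$. To salvage the shortcut for odd $n$ you must additionally observe, as the paper does, that the particular summands $\A$ and $\B$ are each stable under $\theta$, so the $\theta$-factor appearing in $S^{n-1}$ does not move them. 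The base-change step to general $\D$ is also left quite vague; note that the explicit mutation computation (your primary route) requires neither properness nor a base field and is therefore the cleaner way to cover arbitrary stable $\D$.
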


We describe the mutations of $(\A,\B)$ for $n = 4$, the general case is a straightforward
modification.
\[
	\begin{array}{ccccc}
				(\A,\B)& = & \biggl(
				 \begin{tikzcd}[sep=small, row sep=.5ex] 
						& &  0\\
						X \ar{r} & 0  \ar{ur} \ar{dr} &  \\
						& &  0
					\end{tikzcd}  & , & 
				 \begin{tikzcd}[sep=small, row sep=.5ex] 
						& &  X_3\\
						0 \ar{r} & X_2  \ar{ur} \ar{dr} &  \\
						& & X_4 
					\end{tikzcd} 
				\biggr)
				\end{array}
				\]
				
				\[
					\begin{array}{ccccc}
				(\B, \A^{(1)})& = & \biggl(
				 \begin{tikzcd}[sep=small, row sep=.5ex] 
						& &  X_{3}\\
						0 \ar{r} & X_2  \ar{ur} \ar{dr} &  \\
						& &  X_{4}
					\end{tikzcd} & , & 
				 \begin{tikzcd}[sep=small, row sep=.5ex] 
						& &  X_{3}\\
						X_1 \ar{r}{\simeq} & X_2 \ar{ur}{\simeq} \ar{dr}{\simeq} &  \\
						& &  X_{4}
					\end{tikzcd} 
				\biggr)
				\end{array}
				\]
				
				\[
					\begin{array}{ccccc}
				(\A^{(1)},\B^{(1)})& = & \biggl(
				 \begin{tikzcd}[sep=small, row sep=.5ex] 
						& &  X_{3}\\
						X_1 \ar{r}{\simeq} & X_2 \ar{ur}{\simeq} \ar{dr}{\simeq} &  \\
						& &  X_{4}
					\end{tikzcd} & , & 
				 \begin{tikzcd}[sep=small, row sep=.5ex] 
						& &  X_{3}\\
						X_1 \ar{r} & X_3 \oplus X_4 \ar{ur} \ar{dr} &  \\
						& &  X_{4}
					\end{tikzcd} 
				\biggr)
				\end{array}
				\]
				
				\[
					\begin{array}{ccccc}
				(\B^{(1)},\A^{(2)})& = & \biggl(
				 \begin{tikzcd}[sep=small, row sep=.5ex] 
						& &  X_{3}\\
						X_1 \ar{r} & X_3 \oplus X_4 \ar{ur} \ar{dr} &  \\
						& &  X_{4}
					\end{tikzcd} & , & 
				 \begin{tikzcd}[sep=small, row sep=.5ex] 
						& &  0\\
						0 \ar{r} & X_2 \ar{ur} \ar{dr} &  \\
						& &  0
					\end{tikzcd} 
				\biggr)
				\end{array}
				\]
				
				\[
					\begin{array}{ccccc}
				(\A^{(2)},\B^{(2)})& = & \biggl(
				 \begin{tikzcd}[sep=small, row sep=.5ex] 
						& &  0\\
						0 \ar{r} & X_2 \ar{ur} \ar{dr} &  \\
						& &  0
					\end{tikzcd} & , & 
				 \begin{tikzcd}[sep=small, row sep=.5ex] 
						& &  X_{3}\\
						X_1 \ar{r}{\simeq} & X_2 \ar{ur} \ar{dr} &  \\
						& &  X_{4}
					\end{tikzcd} 
				\biggr)
				\end{array}
				\]
				
				\[
					\begin{array}{ccccc}
				(\B^{(2)},\A^{(3)})& = &\biggl(
				 \begin{tikzcd}[sep=small, row sep=.5ex] 
						& &  X_{3}\\
						X_1 \ar{r}{\simeq} & X_2 \ar{ur} \ar{dr} &  \\
						& &  X_{4}
					\end{tikzcd} & , & 
				 \begin{tikzcd}[sep=small, row sep=.5ex] 
						& &  0\\
						X \ar{r} & 0 \ar{ur} \ar{dr} &  \\
						& & 0 
					\end{tikzcd} 
				\biggr)
				\end{array}
				\]
				
				\[
					\begin{array}{ccccc}
				(\A^{(3)},\B^{(3)})& = & \biggl(
				 \begin{tikzcd}[sep=small, row sep=.5ex] 
						& &  0\\
						X \ar{r} & 0 \ar{ur} \ar{dr} &  \\
						& & 0 
					\end{tikzcd} & , & 
				 \begin{tikzcd}[sep=small, row sep=.5ex] 
						& &  X_{3}\\
						0 \ar{r} & X_2 \ar{ur} \ar{dr} &  \\
						& &  X_{4}
					\end{tikzcd} 
				\biggr).
	\end{array}
\]


\subsection{Fractional Calabi-Yau categories}\label{subsec:fra-CY}

\paragraph {$\k$-linear stable $\oo$-categories.} 
Recall that $\k$ is a field.  By $\Vect_\k$ we denote the category of finite-dimensional $\k$-vector
spaces.

In this subsection we consider {\em $\k$-linear stable $\oo$-categories},  i.e.,   stable $\oo$-categories 
 enriched over the  monoidal category of $\k$-module spectra \cite{lurie:ha}. 
 As shown in \cite{cohn}, see also references in \cite[1.1.0.2]{lurie:ha}, such $\oo$-categories
 can be identified with dg-nerves \cite[1.3.1.16]{lurie:ha}, \cite{faonte} of pre-triangulated dg-categories
 over $\k$
 in the sense of \cite{BK:enh}. This means, in particular,
  that we can assume that we are given cochain complexes
 $\Hom^\bullet_\A(x,y)$ over $\k$ for $x,y\in\Ob\A$ such that
\[
\Map_\A(x,y) = \DK\bigl(\tau_{\leq 0} \Hom^\bullet_\A(x,y)\bigr), 
\]
where $\DK$ is the Dold-Kan functor from $\ZZ_{\leq 0}$-graded cochain complexes to simplicial
vector spaces, and $\tau_{\leq 0}$ is the $\ZZ_{\leq 0}$-truncation. For the triangulated category $\h\A$ this
means that
\[
\Hom_{\h\A}(x,y) = H^0 \Hom^\bullet_\A(x,y). 
\]
In examples of interest for us  such complexes are given at the outset. 
In particular, if $X$ is a smooth algebraic variety over $\k$, we denote by $D^b(\Coh_X)$
  the standard dg-enhancement  \cite{BK:enh}
   of the bounded derived category of coherent sheaves on $X$, as well as the corresponding
   stable $\oo$-category obtained as the dg-nerve. For $X=\Spec(\k)$ we use the notation
   $D^b(\Vect_\k)$.

\paragraph{Serre functors for $\k$-linear stable $\oo$-categories.}

Let $\A$ be a $\k$-linear 
stable $\oo$-category. We say that $\A$ is {\em  proper}, if the complexes $\Hom^\bullet_\A(x,y)$
are quasi-isomorphic to bounded complexes of finite-dimensional $\k$-vector spaces. 
 Suppose that $\A$ is proper. Then for each object $x\in\A$ we have a contravariant $\oo$-functor
 \[
 \sigma_x: \A \to\Sp: y\mapsto \DK\biggl(\tau_{\leq 0} \bigl(\Hom^\bullet_\A(x,y)^*\bigr)\biggr).
 \]
 Here $*$ means dualization of the complex over $\k$. 
 Suppose further that for each object $x\in \A$ the functor $\sigma_x$ is represented by
 an object $S(x)\in \A$, i.e., 
 \be\label{eq:serre}
 \sigma_x(y) \,\=\, \Map_\A(y, S(x)). 
 \ee
 Then the standard properties of representable $\oo$-functors imply that the correspondence 
 $x\mapsto S(x)$ extends, in a homotopy unique way, to an $\oo$-functor $S=S_\A: \A\to\A$
 which we call the {\em Serre functor} of $\A$. This agrees with the standard terminology
 \cite{BK:SOD}  for triangulated categories, since \eqref{eq:serre} implies that
 \[
 \Hom_{\h\A}(x,y)^* \,\=\, \Hom_{\h\A}(y, S(x)). 
 \]
As in \cite{BK:SOD} we have:

\begin{prop}\label{prop:serre-properties}
(a) Let $\B, \C$ be proper $\k$-linear stable $\oo$-categories with Serre functors $S_\B$ and $S_\C$.
Suppose that $F: \B\to \C$ is an exact $\oo$-functor admitting a right adjoint $F^*$. Then $F$ admits
a second adjoint $F^{**}$ given by $F^{**}=S_C F S_B^{-1}$. This implies that $F$ has adjoints of all orders. 

\vskip .2cm

(b) Let $(\A_0, \A_1 = {^\perp \A}_0)$ be a coCartesian semi-orthogonal decomposition of $\A$,
  If $\A$ has a Serre functor $S_\A$, then $\A_{-1} = \A_0^\perp = \A_1^{\perp\perp}$ is obtained as 
  $S_\A (\A_1)$. In particular, $(\A_0, \A_1)$ is $\oo$-admissible and in the chain of orthogonals
  $\A_{2i} = {^{\perp{2i}}}\A_0$, $i\in \ZZ$,  is obtained as $S_\A^{-i}(\A_0)$ while
  $\A_{2i+1} =  {^{\perp{(2i+1)}}}\A_0$ is obtained as  $S_\A^{-i}(\A_1)$.
  
  \vskip .2cm
  
  (c) If $X$ is a smooth $n$-dimensional projective variety over $X$, then the Serre functor in $D^b(\Coh_X)$
  is given by tensoring with $\omega_X:= \Omega^n_X[n]$. 
  \qed

\end{prop}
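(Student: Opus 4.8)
The plan is to derive all three parts from the defining property of the Serre functor. I would first note that the stated identity $\Hom_{\h\A}(x,y)^*\cong\Hom_{\h\A}(y,S_\A(x))$ upgrades, after passing to the $\k$-linear mapping complexes $\Hom^\bullet_\A$ (equivalently the mapping spectra) and letting shifts of $y$ vary, to a natural equivalence $\Hom^\bullet_\A(y,S_\A(x))\simeq\Hom^\bullet_\A(x,y)^*$; and recall that a Serre functor, once it exists, is automatically an auto-equivalence. All three parts are then obtained by standard manipulations with this equivalence together with the adjunction identities.

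For (a) I would argue entirely at the level of the mapping complexes, where the $\k$-linear dual $(-)^*$ is available, concatenating Serre duality in $\C$, the adjunction $(F,F^*)$, and Serre duality in $\B$: for $b\in\B$ and $c\in\C$,
\[
\Hom^\bullet_\C\bigl(c,S_\C F S_\B^{-1}(b)\bigr)\simeq\Hom^\bullet_\C\bigl(F S_\B^{-1}(b),c\bigr)^*\simeq\Hom^\bullet_\B\bigl(S_\B^{-1}(b),F^*(c)\bigr)^*\simeq\Hom^\bullet_\B\bigl(F^*(c),b\bigr),
\]
all equivalences natural in $b$ and $c$. A natural equivalence of this shape is precisely the datum exhibiting $S_\C F S_\B^{-1}$ as a right adjoint of $F^*$, so $F^{**}:=S_\C F S_\B^{-1}$ exists. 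The same formula rewrites as $F=S_\C^{-1}F^{**}S_\B$, which exhibits $F$ as a right adjoint of $S_\B^{-1}F^*S_\C$; iterating in both directions (replacing $F$ by the adjoint just produced, and conjugating by the relevant Serre functors or their inverses) yields all the iterated adjoints $F^{(i)}$, $i\in\ZZ$.

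For (b) I would start from the ``orthogonality swap'' $\Bc^\perp=S_\A\bigl({}^\perp\Bc\bigr)$, valid for every stable subcategory $\Bc\subset\A$: substituting $x=S_\A^{-1}(c)$ in the Serre identity gives $\Hom_{\h\A}(b,c)\cong\Hom_{\h\A}\bigl(S_\A^{-1}(c),b\bigr)^*$, so $\Hom_{\h\A}(b,c)$ vanishes for all $b\in\Bc$ iff $S_\A^{-1}(c)\in{}^\perp\Bc$, i.e. iff $c\in S_\A({}^\perp\Bc)$. Taking $\Bc=\A_0$ and using $\A_1={}^\perp\A_0$, $\A_0=\A_1^\perp$ gives $\A_{-1}=\A_0^\perp=\A_1^{\perp\perp}=S_\A(\A_1)$; taking $\Bc=\A_j$ gives the recursion $\A_{j+2}=S_\A^{-1}(\A_j)$, and induction produces $\A_{2i}=S_\A^{-i}(\A_0)$ and $\A_{2i+1}=S_\A^{-i}(\A_1)$. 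The $\oo$-admissibility is then forced: $S_\A$, being an auto-equivalence, carries admissible subcategories to admissible subcategories, while the left piece of a glued SOD is admissible (Proposition \ref{prop:F=*epssb.epsa}); so, starting from the admissible $\A_0$ and $\A_1$, every iterated orthogonal — an $S_\A^{\pm1}$-translate of one of them — is admissible and each consecutive pair is an SOD.

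Part (c) is nothing but Grothendieck--Serre duality on a smooth projective $X$ of dimension $n$: the functorial isomorphism $\Hom_{D^b(\Coh_X)}(\mathcal F,\mathcal G)^*\cong\Hom_{D^b(\Coh_X)}(\mathcal G,\mathcal F\otimes\Omega^n_X[n])$ exhibits $-\otimes\Omega^n_X[n]=-\otimes\omega_X$ as the Serre functor, and I would simply cite it (e.g. \cite{BK:SOD}). The only genuinely delicate point in the whole argument is in (a): ensuring the three natural equivalences are assembled coherently enough to produce an honest adjunction in $\Cat_\oo$ rather than merely a homotopy-category statement; given the $\k$-linear enhancement this is routine but deserves care, and everything else is a line-by-line transcription of the classical triangulated-category arguments of \cite{BK:SOD}.
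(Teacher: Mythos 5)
Your proposal is correct, and it is precisely the standard argument that the paper is implicitly invoking: the text introduces the proposition with ``As in \cite{BK:SOD} we have:'' and supplies no proof beyond the \qed, deferring to Bondal--Kapranov. Your chain of equivalences for (a), the orthogonality-swap identity $\Bc^\perp = S_\A({}^\perp\Bc)$ for (b), and the appeal to Grothendieck--Serre duality for (c) are exactly the expected line-by-line transcription of the triangulated-category arguments to the $\k$-linear stable setting, with the one genuinely non-formal point (the coherence of the adjunction data) correctly flagged.
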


\paragraph {Calabi-Yau and fractional Calabi-Yau stable $\oo$-categories.} 

Let $\C$ be a proper $\k$-linear stable $\oo$-category.
 We say that $\C$ is Calabi-Yau of dimension $d\in \ZZ$, if $S_\C \=  [d]$
is the functor of shift by $d$. In this case any semi-orthogonal decomposition $(\A,\B)$ of $\C$ is orthogonal,
or $2$-periodic, i.e., not only $\B= {^\perp \A}$ but also $\A= {^\perp \B}$,  
as
\[
\Hom_{\h\C}(a,b) = \Hom_{\h\C}(b, a[d])^* =0, \quad a\in \A, \, b\in\B. 
\]
In particular, the gluing functor $F: \A\to \B$ of each such SOD is $0$. 

\vskip .2cm

Further, we say that $\C$ is  {\em fractional Calabi-Yau} of dimension $p/q$, if $S_\C^q \= [p]$, see \cite{kuznetsov}. 

\begin{prop}
If $\C$ is fractional Calabi-Yau of dimension $p/q$, then any coCartesian
SOD $(\A, \B)$ of $\C$ is $2q$-periodic. 
\end{prop}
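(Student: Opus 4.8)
The plan is to deduce this directly from Proposition \ref{prop:serre-properties}(b), which already describes the iterated orthogonals of a coCartesian SOD in terms of powers of the Serre functor. First I would unwind the hypothesis: by definition a fractional Calabi--Yau category $\C$ of dimension $p/q$ is a proper $\k$-linear stable $\oo$-category carrying a Serre functor $S_\C$ with $S_\C^q \simeq [p]$; in particular $S_\C$ is an equivalence, so all the adjoints, mutations and iterated orthogonals occurring below are defined.

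Since the SOD $(\A,\B)$ is assumed coCartesian, Proposition \ref{prop:serre-properties}(b) applies with $\A_0 = \A$ and $\A_1 = \B = {^\perp\A}$. It gives that $(\A,\B)$ is $\oo$-admissible and that, in the chain of orthogonals $\A_i = {^{\perp i}\A}$, one has $\A_{2i} \simeq S_\C^{-i}(\A)$ for all $i$.

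The only line of real content is to evaluate this at $i = q$:
\[
{^{\perp 2q}\A} \;=\; \A_{2q} \;\simeq\; S_\C^{-q}(\A) \;\simeq\; [-p](\A),
\]
using $S_\C^q \simeq [p]$, hence $S_\C^{-q}\simeq[-p]$. Since $\A$ is a \emph{stable} full subcategory of $\C$, it is closed under the shift functors (being closed under fibers and cofibers, in particular under $x\mapsto x[\pm 1]$), so $[-p](\A)=\A$ as full subcategories. Therefore ${^{\perp 2q}\A}=\A$, which is precisely the assertion that the SOD $(\A,\B)$ is $2q$-periodic.

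I do not expect a genuine obstacle: once Proposition \ref{prop:serre-properties}(b) is available the argument is pure bookkeeping. The one step deserving a word of care is the passage from the equivalence of functors $S_\C^q\simeq[p]$ to the equality of subcategories $S_\C^{-q}(\A)=\A$, which is legitimate exactly because stable subcategories are shift-invariant. If one preferred not to cite \ref{prop:serre-properties}(b) as a black box, the same conclusion would follow by noting that $S_\C$ intertwines the mutation equivalences relating $\A_i$ and $\A_{i-2}$ along the chain and iterating $q$ times; but invoking \ref{prop:serre-properties}(b) is the most economical route.
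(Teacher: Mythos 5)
Your argument is correct and is exactly the route the paper intends: the paper's proof is a one-line reference to Proposition \ref{prop:serre-properties}(b), and you have simply spelled out the bookkeeping, including the (necessary but elementary) observation that a stable full subcategory is shift-invariant so that $S_\C^{-q}(\A)=[-p](\A)=\A$. Nothing further needs to be said.
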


\noindent{\sl Proof:} Follows from Proposition \ref{prop:serre-properties}. \qed

\paragraph{Example: quivers of type $\Att$ over a field.} We specialize the considerations of \S \ref{subsec:quivera}
to the case $\D = D^b(\Vect_\k)$. In this case the Serre functor in $\Fun(\Att_n, D)$ is given by the Coxeter functor 
$\tau[1]$. We further have the
 relation 
	\[
		\tau^{n+1} \simeq [-2],
	\]
	categorfying the equation $ z^{n+1} = w^2$ for the $\Att_n$-singularity. 
	This means that $\Fun(\Att_n,\D)$ is fractional Calabi-Yau of dimension $\frac{n-1}{n+1}$.
	 In particular,  every coCartesian semiorthogonal decomposition is $2(n+1)$-periodic providing a more abstract
	explanation for Proposition \ref{prop:typea}.

\paragraph{Quivers of type $\Dtt$ over a field.} We now specialize the considerations of \S \ref{subsec:quiverd}
to the case $\D = D^b(\Vect_\k)$. In this case (as for any finite type quiver) we also have the Coxeter functor
$\tau$ and the Serre functor is identified as
 $S=\tau[1]$
but the  behavior of $S$ depends on the parity of $n$: 

 \begin{itemize}

\item[((0)] For $n$ even, there is the relation 
		\[
			\tau^{n-1} \simeq [-1]
		\]
		so that the category is fractionally Calabi-Yau of dimension $\frac{n-2}{n-1}$.
		Again, this provides an abstract justification of Proposition \ref{prop:typed} and
		shows more generally that, in this case, every  coCartesian SOD is $2(n-1)$-periodic. 
		
		\vskip .2cm

\item[(1)] 	  For $n$ odd, there is the relation 
		\[
			\tau^{n-1} \simeq \theta [-1]
		\]
		where $\theta$ is the involution induced by the reflection of the quiver. Therefore,
		only the $2(n-1)$th power of the Serre functor is a shift.
		Still, this provides an abstract justification of Proposition \ref{prop:typed},
		since both summands of the given SOD are stable under the reflection autoequivalence
		$\theta$. Note that SODs for which this is not the case, however, may only be $4(n-1)$
		periodic , such as (for the case $n=4$)
		\end{itemize}
		
		 \[	
			\begin{array}{ccccc}
			(\A,\B)& = & \biggl(
				 \begin{tikzcd}[sep=small, row sep=.5ex] 
						& &  X_3\\
						X_1 \ar{r} & X_2 \ar{ur} \ar{dr} &  \\
						& & 0 
					\end{tikzcd} & , & 
				 \begin{tikzcd}[sep=small, row sep=.5ex] 
						& &  0\\
						0 \ar{r} & 0 \ar{ur} \ar{dr} &  \\
						& &  X
					\end{tikzcd} 
				\biggr)
				\end{array} 
				\]
				
				\[
				\begin{array}{ccccc}
				(\B,\A^{(1)})& = & \biggl(
				 \begin{tikzcd}[sep=small, row sep=.5ex] 
						& &  0\\
						0 \ar{r} & 0 \ar{ur} \ar{dr} &  \\
						& &  X
					\end{tikzcd} & , & 
				 \begin{tikzcd}[sep=small, row sep=.5ex] 
						& &  X_3\\
						X_1 \ar{r} & X_2 \ar{ur} \ar{dr}{\simeq} &  \\
						& &  X_4
					\end{tikzcd} 
				\biggr)
				\end{array}
				\]
				
				\[
				\begin{array}{ccccc}
				(\A^{(1)},\B^{(1)})& = & \biggl(
				 \begin{tikzcd}[sep=small, row sep=.5ex] 
						& &  X_3\\
						X_1 \ar{r} & X_2 \ar{ur} \ar{dr}{\simeq} &  \\
						& &  X_4
					\end{tikzcd} & , & 
				 \begin{tikzcd}[sep=small, row sep=.5ex] 
						& &  X_3\\
						0 \ar{r} & X_2 \ar{ur}{\simeq} \ar{dr} &  \\
						& &  0
					\end{tikzcd} 
				\biggr)
				\end{array}
				\]
				
				\[
				\begin{array}{ccccc}
				(\B^{(1)},\A^{(2)})& = & \biggl(
				 \begin{tikzcd}[sep=small, row sep=.5ex] 
						& &  X_3\\
						0 \ar{r} & X_2 \ar{ur}{\simeq} \ar{dr} &  \\
						& &  0
					\end{tikzcd} & , & 
				 \begin{tikzcd}[sep=small, row sep=.5ex] 
						& &  X_3\\
						X_1 \ar{r} \ar{urr}{\simeq} & X_2 \ar{ur} \ar{dr} &  \\
						& &  X_4
					\end{tikzcd} 
				\biggr)
				\end{array}
				\]
				
				\[
				\begin{array}{ccccc}
				(\A^{(2)},\B^{(2)})& = & \biggl(
				 \begin{tikzcd}[sep=small, row sep=.5ex] 
						& &  X_3\\
						X_1 \ar{r} \ar{urr}{\simeq} & X_2 \ar{ur} \ar{dr} &  \\
						& &  X_4
					\end{tikzcd} & , & 
				 \begin{tikzcd}[sep=small, row sep=.5ex] 
						& &  X_3\\
						X_1 \ar{r}{\simeq} & X_2 \ar{ur}{\simeq} \ar{dr} &  \\
						& &  0
					\end{tikzcd} 
				\biggr)
				\end{array}
				\]
				
				\[
				\begin{array}{ccccc}
				(\B^{(2)},\A^{(3)})& = & \biggl(
				 \begin{tikzcd}[sep=small, row sep=.5ex] 
						& &  X_3\\
						X_1 \ar{r}{\simeq} & X_2 \ar{ur}{\simeq} \ar{dr} &  \\
						& &  0
					\end{tikzcd} & , & 
				 \begin{tikzcd}[sep=small, row sep=.5ex] 
						& &  0\\
						X_1 \ar{r} & X_2 \ar{ur} \ar{dr} &  \\
						& & X_3
					\end{tikzcd} 
				\biggr)
				\end{array}
				\]

				\[
				\begin{array}{ccccc}
				(\A^{(3)},\B^{(3)})& = & \biggl(
				 \begin{tikzcd}[sep=small, row sep=.5ex] 
						& &  0\\
						X_1 \ar{r} & X_2 \ar{ur} \ar{dr} &  \\
						& & X_4
					\end{tikzcd} & , & 
				 \begin{tikzcd}[sep=small, row sep=.5ex] 
						& &  X_3\\
						0 \ar{r} & 0 \ar{ur} \ar{dr} &  \\
						& &  0
					\end{tikzcd} 
				\biggr)
				\end{array}
				\]

			 And so on until
				
				\[
				\begin{array}{ccccc}
				(\A^{(6)}, \B^{(6)}) & = & \biggl(
				 \begin{tikzcd}[sep=small, row sep=.5ex] 
						& &  X_3\\
						X_1 \ar{r} & X_2 \ar{ur} \ar{dr} &  \\
						& & 0 
					\end{tikzcd} & , & 
				 \begin{tikzcd}[sep=small, row sep=.5ex] 
						& &  0\\
						0 \ar{r} & 0 \ar{ur} \ar{dr} &  \\
						& &  X
					\end{tikzcd} 
				\biggr).
				\end{array}
				\]


\subsection {The Waldhausen S-construction}
Let $F: \A \to\B$ be  an exact functor of stable $\oo$-categories.   
We refer to  \cite[\S 3]{dkss-spher} and references therein for background
on 
 $S_\bullet(F)$,  the (relative) {\em Waldhausen S-construction}
of $F$. Thus $S_\bullet(F)$ is a simplicial object in the (usual) category of
stable $\oo$-categories, so for each $n\geq 0$ we have a stable $\oo$-category
  $S_n(F)$ and these categories are connected by  simplicial face and
  degeneration maps. 
  
  \vskip .2cm
  
  Objects of $S_n(F)$ are data 
  \be\label{eq:S_n-obj}
   \bigl(A_1\to\cdots\to A_n, B, F(A_n)\to B\bigr)
  \ee
  consisting of objects $A_1,\cdots, A_n\in \A$,
  $B\in\B$ and 
   morphisms
 $ A_1\to\cdots\to A_n$ in $\A$ and $F(A_n)\to B$ in $\B$. Thus $S_0(F) = \B$ and $S_1(F) = \A\oplus_F \B$.  More generally, let $\Fun(\Att_n, \A)$ be the
 category of representations of the $\Att_n$-quiver in $\A$, as in 
 \S \ref{subsec:quivera}, whose objects are chains of morphisms
 $ A_1\to\cdots\to A_n$ in $\A$. The evaluation at the last term
 sending a chain as above to $A_n$ defines a functor $\ev_n: \Fun(\Att_n, \A)\to\A$.
 In the notation of this paper we can write
 \[
 S_n(F) = \Fun(\Att_n, \A)\oplus_{F(\ev_n)} \B. 
 \]
 In particular, it has a coCartesian SOD $ (\Fun(\Att_n, \A), \B)$.
 
 \begin{thm}
 Suppose $F$ is spherical in the usual sense, i.e., $4$-spherical. Then
 $F(\ev_n)$ is $2(n+1)$-spherical. 
 \end{thm}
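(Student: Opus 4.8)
The plan is to reduce, via Theorem \ref{thm:N-spher=per}, to a periodicity statement and then to a mutation computation. By construction $S_n(F)=\Fun(\Att_n,\A)\oplus_{F(\ev_n)}\B$ carries the coCartesian SOD $(\Fun(\Att_n,\A),\B)$ whose gluing functor is $F(\ev_n)$, so by Theorem \ref{thm:N-spher=per} it is enough to show that this SOD is $2(n+1)$-periodic, i.e. that ${}^{\perp\,2(n+1)}\Fun(\Att_n,\A)=\Fun(\Att_n,\A)$ inside $S_n(F)$. (The hypothesis of Theorem \ref{thm:N-spher=per} — existence of the iterated adjoints of $F(\ev_n)$ — holds because $F$, being $4$-spherical, has all adjoints by Proposition \ref{prop:N-sph-NM-sph}(a), while $\ev_n$ together with the $\oo$-admissible SOD of $\Fun(\Att_n,\A)$ from Proposition \ref{prop:typea} supplies the rest.)

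To compute the chain of orthogonals I would refine the picture. The category $\Fun(\Att_n,\A)$ has its own SOD $(\Cc_1,\dots,\Cc_n)$ into $n$ copies of $\A$, the images of the ``stalk at vertex $i$'' functors $\A\to\Fun(\Att_n,\A)$; adjacent summands are glued by (a shift of) $\Id_\A$ and non-adjacent ones are orthogonal — this is the $\Att_n$-gluing pattern underlying \S\ref{subsec:quivera}. Composing the inclusions $\Cc_i\hookrightarrow\Fun(\Att_n,\A)$ with the embedding of $\Fun(\Att_n,\A)$ into $S_n(F)$ exhibits $S_n(F)$ with an SOD $(\Cc_1,\dots,\Cc_n,\B)$ of length $n+1$ refining $(\Fun(\Att_n,\A),\B)=(\langle\Cc_1,\dots,\Cc_n\rangle,\B)$; since $\ev_n$ annihilates $\Cc_i$ for $i<n$ and restricts to $\Id_\A$ on $\Cc_n$, the extra gluing data are just $\Cc_i\perp\B$ for $i<n$ and the gluing $\Cc_n\rightsquigarrow\B$ by $F$ (up to the standard shift convention for gluing functors). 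Thus $S_n(F)$ is a linear configuration of $n+1$ semiorthogonal pieces whose $n$ edges are all identities except the last, which is the $4$-spherical functor $F$.

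I would then run the mutations of this configuration exactly as in the explicit tables of Propositions \ref{prop:typea} and \ref{prop:typed}. Along the identity edges the mutations reproduce the combinatorial shuffle of the $\Att_n$-computation; at the $F$-edge one invokes the $N=4$ instance of Theorem \ref{thm:N-spher=per} — equivalently, that the two-piece SOD $(\A,\B)$ of $\A\oplus_F\B$ is $4$-periodic, with successive gluing functors $F,F^{*},F^{**},F^{***}$ and the fourth mutation returning to the start up to the twists $\EE_2(F),\EE^2(F)$, which are equivalences by Proposition \ref{prop:spher-tw-eq}. A clean uniform bookkeeping is via lax matrices: by Propositions \ref{prop:comp-n-laxmat} and \ref{prop:laxmat-mu} the composite mutation coordinate change over a putative period is represented by a product of elementary mutation matrices of ``identity type'' $\bigl(\begin{smallmatrix}0&\Id\\\Id&\Id\end{smallmatrix}\bigr)$ (from the $\Att_n$-edges, occurring cyclically) interleaved with matrices of ``$F^{(k)}$ type'' $\bigl(\begin{smallmatrix}0&\Id\\\Id&F^{(k)}\end{smallmatrix}\bigr)$; the off-diagonal entries of the product are totalizations of continuant-type cubes, and the point is that after exactly $2(n+1)$ factors they vanish — the categorical shadow of the Euler-polynomial identity produced, via the recursion of Proposition \ref{prop:euler-recurs}(a), by a continued fraction with $n$ ``unit'' entries capped by $F$, using $\EE_3(F)=\EE^3(F)=0$ through the Fibonacci and determinantal triangles of \S\ref{subsec:hi-twist}. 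Block-diagonality of the product is then the asserted $2(n+1)$-periodicity.

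The main obstacle is precisely this count: pinning down that the period is $2(n+1)$ rather than $2n$ or $2(n+2)$. Were the last edge an identity too, the configuration would recover $\Fun(\Att_{n+1},\A)$ and period $2(n+2)$; replacing that edge by a genuinely $4$-spherical functor must shorten the period by $2$, and checking this cleanly requires tracking both the shift conventions introduced in the refinement step and the way the $\Att_n$-combinatorics and the four-step $F$-cycle interlock — which is exactly where the continued-fraction/continuant formalism earns its keep.
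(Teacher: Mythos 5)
Your proposal takes a genuinely different route from the paper, and it is not complete: you say so yourself in the last paragraph. The paper proves the theorem by invoking the paracyclic structure on $S_\bullet(F)$ established in \cite[Th.\ 3.2.1]{dkss-spher}: for a spherical $F$ the rotation $\tau_n$ is a self-equivalence of $S_n(F)$ with $\tau_n^{n+1}$ preserving the SOD $(\Fun(\Att_n,\A),\B)$, and a direct computation of $\tau_n$ on objects shows that $\tau_n(\B)$ is the right orthogonal of $\Fun(\Att_n,\A)$. Piecing together the powers of $\tau_n$ immediately produces the chain of orthogonals, and the period $2(n+1)$ comes out by counting: $n+1$ applications of $\tau_n$ produce $n+1$ SODs, hence $2(n+1)$ orthogonals. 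Your plan instead refines to a length-$(n+1)$ linear SOD $(\Cc_1,\dots,\Cc_n,\B)$ and proposes to mutate it directly using the lax-matrix/continuant machinery, analogous to Propositions \ref{prop:typea}--\ref{prop:typed}.

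The refinement step is sound: the stalk subcategories $\Cc_i$ do give a linear SOD of $\Fun(\Att_n,\A)$ with adjacent gluings a shift of $\Id_\A$, and because $\ev_n$ kills $\Cc_i$ for $i<n$ and is $\Id_\A$ on $\Cc_n$, the refined SOD of $S_n(F)$ has the shape you describe. But the core of the theorem --- that this configuration is $2(n+1)$-periodic and not $2n$- or $2(n+2)$-periodic --- is exactly what you leave unresolved. You correctly identify that a pure $\Att_{n+1}$-configuration (all edges identities) would be $2(n+2)$-periodic, and you conjecture that replacing the last edge by a $4$-spherical $F$ shaves off exactly $2$, but you neither run the mutation tables nor carry out the continuant bookkeeping to verify this. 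Without that computation the argument is incomplete. The paper's route sidesteps all of this bookkeeping because the paracyclic rotation provides the periodicity "for free": the structural input is that $S_\bullet(F)$ is not merely simplicial but paracyclic when $F$ is spherical, and then the period $2(n+1)$ is forced by the fact that there are $n+1$ rotation steps. If you want to salvage your approach, the concrete task is to compute, say by induction on $n$ using the lax-matrix recursion of Proposition \ref{prop:comp-n-laxmat}, that the relevant continuant-cube totalization vanishes after exactly $2(n+1)$ mutation factors (and not before); at present this is asserted, not proved.
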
 
  
  \begin{proof} Equivalently, we need to show that the SOD above is $2(n+1)$-periodic. 
  As shown in \cite[Th. 3.2.1]{dkss-spher},  $F$ being spherical
  implies that that $S_\bullet(F)$ has not just simplicial but a {\em paracyclic
  structure}. In particular, we have the paracyclic rotation $\tau_n$ 
  which is a self-equivalence  of $S_n(F)$ with $\tau_n^{n+1}$
  (the  ``monodromy of the schober'')
  preserving the  SOD $(\Fun(\Att_n, \A), \B)$. 
  
  The category $\B$ is the left orthogonal of $\Fun(\Att_n, \A)$ and we claim that $\tau_n(\B)$ is
  the right orthogonal of $\Fun(\Att_n, \A)$.
  For this we recall the action of $\tau_{n+1}$ on objects \cite{dkss-spher}.
  In a datum  \eqref{eq:S_n-obj}
   representing an object of $S_n(F)$,
  the morphism $F(A_n)\to B$ is associated to
  a homotopy unique morphism $A_n\to F^*(B)$. Then
  \[
  \begin{gathered}
  \tau_n \bigl(A_1\to\cdots\to A_n, \, B, F(A_n) \to B\bigr)
  =
  \\
  =
  \bigl( A_2/A_1 \to \cdots \to A_n/A_1 \to F^*(B)/A_1,\, B/F(A_1)), \, FF^*(B)/F(A_1) \to B/F(A_1))\bigr). 
  \end{gathered}
  \]
 Applying this to the case when all $A_i=0$ and $B$ is arbitrary,
 we get an object of the form
 \[
 \bigl(0\to \cdots \to 0 \to F^*(B), \, F^*(B), FF^*(B) \overset{\eta}{\to} B\bigr)
 \]
 where $\eta$ is the counit of the adjunction $F \dashv F^*$ or, equivalently, the adjoint morphism
 $F^*(B) \to F^*(B)$ of $\eta$ is an equivalence.   
 It follows from this last characterization by direct computation that the full subcategory on such objects constitutes the right
 orthogonal to the subcatgory $\Fun(\Att_n, \A)$ so that we obtain an SOD $(\tau_n(\B),\Fun(\Att_n,
 \A))$. Piecing together the SODs obtained from the above two SODs by applying powers of $\tau_n$ we thus obtain the chain
 of orthogonals
 \[
	\tau_n^{n+1}(\Fun(\Att_n, \A)) =\Fun(\Att_n, \A),\; \tau_n^{n+1}(\B) = \B,\;
\tau_n^{n}(\Fun(\Att_n, \A)),\quad  ... \quad ,\; \tau_n(\B),\; \Fun(\Att_n, \A)
  \]
  of period $2(n+1)$. 
\end{proof}


\subsection{$N$-spherical objects}\label{subsec:N-spher-obj}

\paragraph{The general concept.} 
 In this subsection we assume all stable $\oo$-categories to be $\k$-linear. 
Each such category $\B$ is tensored over $D^b(\Vect_\k)$. In particular, each object $E\in \B$ gives
rise to an exact $\oo$-functor
\be
F=F_E: \A=D^b(\Vect_\k) \lra \B, \quad V\mapsto V\otimes E. 
\ee

An object $E\in \B$ will be called an {\em $N$-spherical object} of $\B$, if $F_E$ is an $N$-spherical functor.
This extends the standard terminology relating spherical objects and spherical functors \cite{AL17}
which corresponds to $N=4$. As the sizes of $\A$ and $\B$ are typically quite different,
the concept of  $N$-spherical objects makes real sense only for $N$ even. 

\paragraph{$N$-spherical objects in algebraic geometry.}\label{par:N-spher-AG}

Let $X$ be a smooth projective
 $n$-dimensional algebraic variety over $\k$. As the Serre functor of $\A=D^b(\Vect_\k)$
is trivial and that of $\B=D^b(\Coh_X)$ is given by tensoring with $\omega = \omega_X = \Omega^n_X[n]$,
 we conclude, see Proposition \ref{prop:serre-properties}(a), that 
 for any object $E\in \B$ the functor $F_E$ has adjoints of all integer orders of which the iterated 
right adjoints
$F^{(i)}$, $i\geq 0$,  have the form:
\[
   \xymatrix@R=0.5pc{
   \A = D^b(\Vect_\k)\ar[rr]^{F=-\otimes E} && \B= D^b(\Coh_X)
   \\
   &&\ar[ll]_{F^* = \Hom^\bullet(E,-)}
   \\
    \ar[rr]^{F^{**}= -\otimes E\otimes\omega}&&
    \\
    && \ar[ll]_{F^{(3)} = \Hom^\bullet(E\otimes\omega, -)} 
    \\
    \ar[rr]^{F^{(4)} = -\otimes E\otimes\omega^{\otimes 2}} &&
    \\
    \cdots\cdots\cdots & \cdots\cdots\cdots &  \cdots\cdots\cdots
  }
   \]

   \paragraph{$6$-spherical objects.} After   usual (or $4$-)spherical
   objects, the first new case is that of $6$-spherical objects. Let $F=F_E$. 
   By Proposition \ref{prop:spher=triang} to check that $E$ is $6$-spherical,
   it suffices to verify that $\EE^4(F): \A\to \A$ is an equivalence and $\EE^5(F): \A\to\B$ is
   zero.  Now,  $\EE^4(F)$ is the totalization of
   \be\label{eq:E^3(F_E)}
   \xymatrix{
    & F^{(3)} F^{(2)} \ar[dr]&
    \\
    \Id_\A \ar[ur] \ar[dr]& F^{(3)} F \ar[r] & F^{(3)} F^{(2)} F^* F.
    \\
    & F^* F \ar[ur] &
   }
   \ee
   We can view this as a $3$-term complex with the middle term being the sum of
   three functors in the central column. Since all these functors are endofunctors of $\A=\D^b(\Vect_\k)$,
   they reduce to tensor multiplication with some (dg-)vector spaces which can be found by applying them
   to the object $\k\in\A$. 
    Let  $A=\Hom^\bullet_\B(E,E)$ be the endomorphism
   dg-algebra of $E$. We assume the algebro-geometric situation of \S \ref{par:N-spher-AG}
   Then:
   \be\label{eq:F3F2(k)}
   \begin{gathered}
   F^{(3)} F^{(2)}: \k \mapsto F^{(3)}(E\otimes\omega) = \Hom^\bullet(E\otimes\omega, E\otimes\omega) = A;
   \\ F^{(3)} F: \k \mapsto F^{(3)} (E) = \Hom^\bullet (E\otimes\omega, E); 
   \\
   F^*F: \k \mapsto F^*(E)=\Hom^\bullet(E. E) = A;
   \\
    F^{(3)} F^{(2)} F^* F: \k \mapsto F^{(3)} F^{(2)} (A) = A\otimes A. 
   \end{gathered}
   \ee
   So \eqref{eq:E^3(F_E)} reduces to the complex of (dg-)vector spaces
   \be\label{eq:E^3(F_E)-dg}
   \xymatrix{
    & A \ar[dr]^{a\mapsto a\otimes 1}&
    \\
    \k \ar[ur]^1 \ar[dr]_1& \Hom^\bullet (E\otimes\omega, E) \ar[r]^{\quad\quad s_*} &  A\otimes A,
    \\
    & A \ar[ur]_{a\mapsto 1\otimes a} &
   }
   \ee
where $s_*: \Hom^\bullet(E\otimes \omega, E)\to A\otimes A$ is induced by the map
\[
s: E\lra E\otimes \omega\otimes \Hom^\bullet(E, E) = E\otimes\omega\otimes A
\]
which corresponds by Serre duality to   
\[
1\in A^*\otimes A = \Hom^\bullet(E, E\otimes\omega) \otimes \Hom^\bullet(E, E). 
\]
The condition that $\EE^4(F)$ is an equivalence means that the cohomology of the complex
\eqref{eq:E^3(F_E)-dg} reduces to a $1$-dimensional vector space in some degree. 

\vskip .2cm

Next, $\EE^5(F): \A\to\B$ is the totalization of 
\be\label{eq:E5F-general}
\xymatrix@R=0.1pt@C=70pt{
& F^{(4)}F^{(3)} F^{(2)} &
\\
F & \oplus &
\\
\oplus & F^{(4)} F^{(3)} F & 
\\
F^{(2)}\quad  \ar[r]  & \quad \quad \oplus  \quad\quad  \ar[r] & \quad F^{(4)} F^{(3)} F^{(2)} F^* F.
\\
\oplus & F^{(4)} F^* F &
\\
F^{(4)}  & \oplus & 
\\
& F^{(2)} F^* F & 
}
\ee
 and is determined by  the object $\EE^5(F)(\k)\in\B$. Continuing the computations in 
 \eqref{eq:F3F2(k)}, we find:
 \[
 \begin{gathered}
 F^{(4)} F^{(3)} F^{(2)}: \k \mapsto F^{(4)} (A) = A\otimes E\otimes \omega^{\otimes 2}; 
 \\
 F^{(4)} F^{(3)} F: \k \mapsto F^{(4)} (\Hom^\bullet(E\otimes\omega, E)) = 
 \Hom^\bullet(E\otimes\omega, E)\otimes\omega^{\otimes 2}; 
 \\
 F^{(4)} F^* F: \k \mapsto F^{(4)}(A) = A\otimes E\otimes \omega^{\otimes 2};
 \\
 F^{(2)} F^* F: \k \mapsto F^{(2)}(A) = A\otimes E\otimes\omega; 
 \\
 F^{(4)} F^{(3)} F^{(2)} F^* F: \k \mapsto F^{(4)}(A\otimes A) = A\otimes A\otimes\omega^{\otimes 2}. 
 \end{gathered} 
 \]
So $\EE^5(F)(\k)$ is the totalization of 
\be\label{eq:e5(F_E)}
\xymatrix@R=0.1pt@C=50pt{
& A\otimes E\otimes\omega^{\otimes 2}&
\\
E & \oplus &
\\
\oplus & \Hom^\bullet(E\otimes\omega, E)\otimes E\otimes \omega^{\otimes 2} & 
\\
E\otimes\omega \quad  \ar[r]  & \quad \quad \oplus  \quad\quad  \ar[r] & 
\quad A\otimes A\otimes E\otimes \omega^{\otimes 2}.
\\
\oplus & A\otimes E\otimes \omega^{\otimes 2} &
\\
E\otimes\omega^{\otimes 2} & \oplus & 
\\
& A\otimes E\otimes\omega & 
}
\ee
The second condition for $E$ to be $6$-spherical is that \eqref{eq:e5(F_E)} should be exact.

\paragraph{Enriques manifolds.} We say that $X$ is an {\em Enriques manifold},
if it can be obtained as a quotient $X=\wt X/\ZZ_2$ where $\wt X$ is a Calabi-Yau manifold
(of dimension $n$) and $\ZZ_2=\{1,\sigma\}$ is generated by a fixed point free involution $\sigma$ which acts
on $H^0(X, \Omega^n)$ by $(-1)$. For $n=2$ we get the classical concept of an Enriques
surface. 

We have, in particular, that $(\Omega_X^n)^{\otimes 2} = \Oc_X$, and thus $\omega^{\otimes 2} =
\Oc_X[2n]$ and $\omega^{\otimes (-1)} = \omega[-2n]$. 

\begin{prop}\label{prop:enriques}
For an Enriques manifold of dimension $n$ the sheaf $\Oc= \Oc_X$ is a $6$-spherical object.
\end{prop}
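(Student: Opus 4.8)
The plan is to prove $6$-sphericity of $F:=F_{\Oc_X}\colon D^b(\Vect_\k)\to D^b(\Coh_X)$, $V\mapsto V\otimes\Oc_X$, through the criterion of Proposition~\ref{prop:spher=triang}: since $N=6$ is even, it suffices to show that $\EE^4(F)$ is an equivalence and that $\EE^5(F)=0$. First I would record the two geometric inputs. From the defining property $(\Omega^n_X)^{\otimes 2}\cong\Oc_X$ of an Enriques manifold one gets $\omega_X^{\otimes 2}\cong\Oc_X[2n]$ and $\omega_X^\vee\cong\omega_X[-2n]$, whence for the iterated adjoints of $F$ listed in \S\ref{par:N-spher-AG} one has $F^{(4)}\cong F[2n]$. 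Second, $A:=\Hom^\bullet_{D^b(\Coh_X)}(\Oc_X,\Oc_X)=\RG(X,\Oc_X)=\RG(\wt X,\Oc_{\wt X})^{\ZZ_2}$; since $\wt X$ is Calabi--Yau of dimension $n$, $\RG(\wt X,\Oc_{\wt X})\cong\k\oplus\k[-n]$, and $\ZZ_2=\{1,\sigma\}$ acts trivially on $H^0(\wt X,\Oc_{\wt X})$ and by $-1$ on $H^n(\wt X,\Oc_{\wt X})\cong H^0(\wt X,\omega_{\wt X})^*$ (being $\sigma$-equivariantly Serre-dual to the sign action on $H^0(\wt X,\omega_{\wt X})$), so that taking invariants gives $A\cong\k$ concentrated in degree $0$. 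Serre duality on $X$ then also yields $\Hom^\bullet(\Oc_X\otimes\omega_X,\Oc_X)=\RG(X,\omega_X^\vee)\cong\k[-2n]$. In particular $\Oc_X$, and likewise $\omega_X$ and $\Oc_X[2n]$, are exceptional objects of $D^b(\Coh_X)$, and $\End_{\ExFun(D^b(\Vect_\k),D^b(\Coh_X))}(F)\cong\Hom_{D^b(\Coh_X)}(\Oc_X,\Oc_X)\cong\k$.

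For $\EE^4(F)$ I would iterate the Fibonacci triangles \eqref{eq:fib-tri-2}. Since $A\cong\k$, $F$ is fully faithful, so the unit $\Id\to F^*F$ is an equivalence and $\EE^2(F)=\Cof(\Id\to F^*F)=0$. Hence $\EE^3(F)=\Cof\bigl(F\to F^{**}\,\EE^2(F)\bigr)=F[1]$ and $\EE^4(F)=\Cof\bigl(\EE^2(F)\to F^{(3)}\,\EE^3(F)\bigr)=F^{(3)}F[1]$. Now $F^{(3)}F(V)=\Hom^\bullet(\omega_X,V\otimes\Oc_X)=V\otimes\RG(X,\omega_X^\vee)=V[-2n]$, so $\EE^4(F)\simeq[1-2n]$ is a self-equivalence of $D^b(\Vect_\k)$. (Equivalently: feeding $A\cong\k$ into the explicit complex \eqref{eq:E^3(F_E)-dg} turns it into a complex of one-dimensional vector spaces whose only surviving cohomology is $\k$ in degree $2n-1$.)

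It remains to prove $\EE^5(F)=0$; this is the substantive step. By \eqref{eq:fib-tri-2} with $N=5$, $\EE^5(F)=\Cof\bigl(\rho_{5,F}:\EE^3(F)\to F^{(4)}\,\EE^4(F)\bigr)$, and the identifications of the previous paragraph give $\EE^3(F)\simeq F[1]$ together with $F^{(4)}\,\EE^4(F)=F^{(4)}F^{(3)}F[1]\simeq F[1]$, using $F^{(4)}F^{(3)}F(V)=F^{(3)}F(V)\otimes\omega_X^{\otimes 2}=V[-2n]\otimes\Oc_X[2n]=F(V)$. Thus $\rho_{5,F}$ is an endomorphism of $F[1]$, hence multiplication by a scalar $\lambda\in\End_{\ExFun}(F)\cong\k$, and the proposition reduces to the assertion $\lambda\neq 0$ — equivalently, to the acyclicity of the complex \eqref{eq:e5(F_E)}. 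After substituting $A\cong\k$, $\Hom^\bullet(\Oc_X\otimes\omega_X,\Oc_X)\cong\k[-2n]$ and $\omega_X^{\otimes 2}\cong\Oc_X[2n]$, every entry of \eqref{eq:e5(F_E)} becomes one of $\Oc_X$, $\omega_X$, $\Oc_X[2n]$, and a count shows that the three ``source'' objects $F(\k),F^{(2)}(\k),F^{(4)}(\k)$ together with the apex $F^{(4)}F^{(3)}F^{(2)}F^*F(\k)$ realise, with multiplicities, the same collection of objects as the four intermediate ones; the differentials are built from the adjunction (co)units and from the canonical map $s:\Oc_X\to\omega_X$ of \eqref{eq:E^3(F_E)-dg} dual to the Serre pairing. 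I would then verify exactness of \eqref{eq:e5(F_E)} by a diagram chase, using the triangle identities (to see that the (co)unit edges between matched copies are isomorphisms, since a nonzero morphism between exceptional objects of the same shift is invertible) and the octahedron axiom applied to the fiber sequence determined by $s$. Granting this, $\EE^5(F)=0$, so by Proposition~\ref{prop:spher=triang} $F=F_{\Oc_X}$ is $6$-spherical; that is, $\Oc_X$ is a $6$-spherical object of $D^b(\Coh_X)$.

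The main obstacle is precisely this last verification. In contrast with the $\EE^4$ computation — where the vanishing $\EE^2(F)=0$ collapses everything — the differentials in \eqref{eq:e5(F_E)} are a genuine mixture of adjunction (co)units and the Serre map $\Oc_X\to\omega_X$ (itself not invertible for $n>0$), so acyclicity does not follow from a formal ``biCartesian squares'' argument and requires the careful chase described above. A route that would sidestep the explicit cube is to compute $\EE_4(F)$ and $\EE^4(F^*)=\EE_4(F)^*$ (Proposition~\ref{prop:E_N-dual-E^N}) via the homological Fibonacci triangles and then feed the results into the determinantal triangle of Proposition~\ref{prop:det-triang} for $N=4$, reading off $\EE^5(F)$ from it; I expect, however, that the direct analysis of \eqref{eq:e5(F_E)} will be the most efficient path.
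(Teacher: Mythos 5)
Your computation of $\EE^4(F)$ is correct and in fact cleaner than the paper's: since $A=\Hom^\bullet_{D^b(\Coh_X)}(\Oc_X,\Oc_X)\cong\k$ makes $F$ fully faithful, $\EE^2(F)=0$, and the Fibonacci triangles \eqref{eq:fib-tri-2} collapse at once to $\EE^3(F)\simeq F[1]$ and $\EE^4(F)\simeq F^{(3)}F[1]\simeq[1-2n]$, whereas the paper analyses the explicit complex \eqref{eq:E^3(F_E)-dg} directly. Your reformulation of the remaining step --- that $\rho_{5,F}$ becomes a scalar endomorphism $\lambda\in\End_{\ExFun}(F)\cong\k$ of $F[1]$ and $\EE^5(F)=0$ if and only if $\lambda\neq 0$ --- is also sound.

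But precisely there your argument stops being a proof: you write that you ``would then verify exactness of \eqref{eq:e5(F_E)} by a diagram chase'' and then grant the conclusion. That verification is the entire substantive content of the proposition. The Fibonacci-triangle repackaging reveals that the question is whether a single scalar vanishes, but it gives no information about that scalar; determining $\lambda$ still requires unwinding the specific composite of adjunction units defining $\rho_{5,F}$, which is equivalent to the acyclicity check you deferred. The paper does carry this out: specializing the eight entries of \eqref{eq:e5(F_E)} to the Enriques case (two copies of $\Oc$, two of $\omega$, four of $\Oc[2n]$), it exhibits a filtration $C_1\subset C_2\subset C$ of the complex $C$ whose graded pieces are visibly acyclic --- $C_1$ is the totalization of the $2$-square of units linking the four $\Oc[2n]$'s (each edge being a nonzero map between objects with endomorphism ring $\k$, hence an equivalence, so the square is biCartesian), while $C_2/C_1\simeq\{\omega\xrightarrow{\sim}\omega\}$ and $C/C_2\simeq\{\Oc\xrightarrow{\sim}\Oc\}$. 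To complete your proof you would need to actually perform this check (or an equivalent one such as the determinantal-triangle route you mention in passing); as written, the key claim is asserted rather than established.
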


\noindent{\sl Proof:} If $E=\Oc$, then $A=\k$, while $\Hom^\bullet(E\otimes \omega, E) = 
\k[-2n]$. 
So the complex \eqref{eq:E^3(F_E)-dg}
has the form
 \[
   \xymatrix{
    & \k \ar[dr]^{  1}&
    \\
    \k \ar[ur]^1 \ar[dr]_1&   \k[-2n] \ar[r]^{\quad s_*} &  \k,
    \\
    & \k \ar[ur]_{  1 } &
   }
   \]
   The outside rim  here is an exact subcomplex, and the quotient by it is $\k[-2n]$, so the total
   complex has $1$-dimensional cohomology and the functor of tensoring with it is an equivalence. 
   
   Next, the complex \eqref{eq:e5(F_E)}, denote it $C$,  has the form
 \[
\xymatrix@R=0.1pt@C=70pt{
&    \Oc [2n] &
\\
\Oc & \oplus &
\\
\oplus & \Oc & 
\\
\quad \omega  \quad \ar[r]  & \quad \quad \oplus  \quad\quad  \ar[r] & 
\quad \Oc [2n].
\\
\oplus & \Oc [2n]&
\\
\Oc [2n] & \oplus & 
\\
&  \omega & 
}
\]
We see that the summands in $C$ split into matching groups: two copies of $\Oc$, two copies 
of $\omega$ and four copies of
of $\Oc[2n]$. Looking at the differentials more closely, we  see that
  the sum of $4$ copies
of $\Oc[2n]$ is a subcomplex in $C$, denote it $C_1$. These copies correpond to the summands in \eqref{eq:E5F-general}
beginning with $F^{(4)}$, and the complex itself consists of applying two commuting units:
for $(F^{(2)}, F^{(3)})$ and $(F, F^*)$. The subcomplex $C_1$ is therefore exact.
Next, let $C_2$ be the sum of $C_1$ and the two summands $\omega$. This is also a subcomplex,
as the new summands correspond to summands in \eqref{eq:E5F-general}
 beginning with $F^{(2)}$. The quotient $C_2/C_1$ is the $2$-term complex 
 $\{\omega\buildrel \Id\over \to\omega\}$ and so is exact. Finally, the quotient of  $C$ 
 by $C_2$ is the $2$-term complex  $\{\Oc \buildrel \Id\over \to\Oc \}$ and is exact as well.
 So $C$ is exact and the proposition is proved. \qed
 
 \paragraph{Further remarks.}
 
 (a) For $n$ even, an $n$-dimensional Enriques manifold can be seen as a holomorphic analog
 of the real projective space $\RR P^n$ which is a non-orientable $C^\oo$-manifold, quotient of
 the sphere $S^n$ by an orientation reversing involution. In fact, we have a topological analog of Proposition 
 \ref{prop:enriques} where $\B$  is the  derived category of complexes of sheaves of $\k$-vector spaces
  on $\RR P^n$ with locally constant cohomology. Then
   the constant sheaf $\ul \k_{\RR P^n}$ is a $6$-spherical object in $\B$. The proof is analogous to that
   of Proposition
 \ref{prop:enriques}.

 \vskip .2cm
 
 (b)
 Let us call an {\em $m$-fold Enriques manifold}  a quotient $\wt X/\ZZ_m$,
 where $\wt X$ is an $n$-dimensional Calabi-Yau manifold and $\ZZ_m$ is a cyclic group of
 order $m$ acting on $\wt X$ freely and such that the induced action on $H^0(\wt X, \Omega_{\wr X}^n)$
 is given by a primitive $m$-th root of $1$. One can  generalize Proposition
 \ref{prop:enriques} by showing that for an $m$-fold Enriques manifold $X$ the sheaf
 $\Oc_X$ is a $2(m+1)$-spherical object. The proof is similar but more combinatorially involved
 and we do not give it here.

 
 \section{Discussion and further directions}\label{sec:fur-dir}
 
 \subsection{Interpretation as categorified Stokes data}\label{subsec:inter-cat-stokes}
 
 An analogy between $N$-periodic SODs and Stokes data for irregular local systems was
 pointed out by Kuwagaki \cite{kuwagaki}. We recall \cite{deligne} that an  irregular local system
  on $\CC$ near
 $\oo$ has, first of all, the {\em exponential datum} which gives a local system of  finite partially ordered
 sets on $S^1_\oo$,  the circle of directions. Following Kontsevich,
 such  a datum is represented geometrically by a ``Lissajous figure"
 as in Fig. \ref {fig:lissajous} so that the intersection of the figure with the ray in any direction $\zeta$ gives
 the poset corresponding to $\zeta$. 
 
 \begin{figure}[h]
 \centering
 \begin{tikzpicture}[scale=5]
 \draw[->, dotted, line width = 1] (0,0) -- (0.5,0.5); 
 \draw[domain=0:6.28,samples=200,smooth] plot (canvas polar cs:angle=\x r,radius={10+sin(5*\x r)}); 
\draw[domain=0:6.28,samples=200,smooth] plot (canvas polar cs:angle=\x r,radius={10-sin(5*\x r)});       
\node at (0.5, 0.4){$\zeta$}; 
\node at (0,0) {\tiny$\bullet$}; 
\end{tikzpicture} 
\caption{A local system of $2$-element posets on $S^1$.}\label{fig:lissajous}
\end{figure}
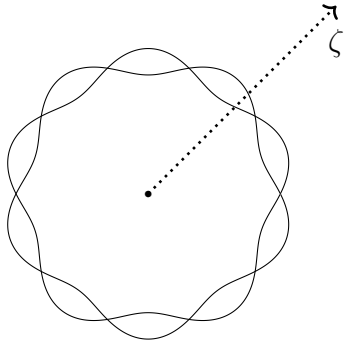
\noindent Second, the local system of solutions of the system near $\oo$ is filtered by the local system of
 posets in the exponential datum, according to the rate of exponential growth of solutions. 
 This is the {\em Stokes filtration}. 
 The case of $2$-element posets depicted in Fig. \ref{fig:lissajous}
 corresponds to the special situation when there is only one type
 of sub-dominant growth, and the subspace  formed by solutions of such growth switches  (Stokes
 phenomenon)
 at finitely many {\em Stokes directions} corresponding to the intersections of the branches of the
 Lissajous figure. Such special situation is always present if the system is of rank $2$, when the subdominant
 subspace is $1$-dimensional. 
 
 Thus $N$-periodic SODs  
 can be seen as categorical analogs of irregular local systems 
 (``irregular schobers'') with  only one type of sub-dominant growth and with $N$ Stokes directions.  
 An example of such system is given by the complex Schr\"odinger equation
 \be\label{eq:schro}
 \psi''(z) = U(z)\,  \psi(z), \quad  U(z) \in \CC[z], \quad \deg(U) = N-2. 
 \ee
 In particular, $3$-periodic SODs can be seen as categorical analogs of the Airy equation
 \[
 \psi''(z) = z\, \psi(z)
 \]
 which has $3$ Stokes directions,
 while $4$-periodic SODs (coming from usual spherical functors)  correspond in this analogy
 to the complex harmonic oscillator (the Weber equation)
 \[
 \psi''(z) = (z^2+a)\,  \psi(z) 
 \]
  which has $4$ Stokes directions. 
  
  \vskip .2cm
  
  Further, the moduli space of Stokes data for Schr\"odinger equations \eqref{eq:schro} with even $N$
  described by Sibuya \cite{sibuya},  was interpreted by Boalch \cite{boalch} as a multiplicative
  symplectic quotient in which the group valued moment map is given by an appropriate
  continuant. This is in complete agreement with the above analogy and  with our use
  of categorified continuants to describe $N$-periodic SODs. 
  
  More recently, the noncommutative Poisson formalism of Fairon and Fernandez \cite{fairon}
  involves, in a universal way, continuants whose arguments are operators $x_1: V_0\to V_1$,
  $x_2: V_1\to V_0$ etc. with alternating sources and targets, just like our chains of adjoints. 
  It corresponds to $2$-level Stokes filtrations with graded components of arbitrary dimension. 
  In fact, the approach of \cite{fairon} gives a construction of the multiplicative quiver variety associated
  to any graph $Q$. Our case at hand corresponds to $Q = Q_N$ being the intersection
  graph of the Lissajous Fig. \ref{fig:lissajous}, e.g., for $N$ even $Q_N$ has two vertices and
  $N$ edges joining them. Thus our approach can be seen as a categorification of that of 
  \cite{fairon} for $Q=Q_N$. Generalizations to other $Q$ seem possible and very interesting. 
  
  \vskip .2cm
  
  In a seemingly different direction, continuants and their level varieties appear in  recent work
  of Etingof, Frenkel and Kazdhan \cite[\S 4.8]{etingof} on the analytic Langlands correspondence for
  $PGL_2$ over  archimedean fields. In that approach, continuants describe
  moduli space of certain {\em regular}  differential equations (balanced opers) on $\CC$ with singularities
  on the real line. Our categorification of continuants suggest that the analytic
  Langlands correspondence itself may allow one more level of categorical lift
  involving perverse schobers to replace differential systems on the eigenvalues of the Hecke
  operators considered in \cite{etingof}. 
  
    
    \subsection{More general lax additive $(\oo,2)$-categories}\label{subsec:more-gen-lax}
    
    The natural context for categorified continuants considered in this paper is that of arbitrary
    lax additive $(\oo, 2)$-categories in the sense of \cite{CDW}.   The situation considered here corresponds to
 the particular $(\oo, 2)$-category $\SSt$ of stable $\oo$-categories and their exact functors. 
 Indeed, in any   lax additive $(\oo, 2)$-category $\DD$ we can speak about (iterated) adjoints of $1$-morphisms and  form totalizations of cubes of $1$-morphisms.  We can also form analogs of semi-orthogonal decompositions which are (op)lax (co)limits $a\oplus_f b$ for any two objects $a,b\in \DD$ and any
 $1$-morphism $f: a\to b$. All our statements should extend to this context more or less straightforwardly. 
 
 \vskip .2cm

 In fact, one can consider more general, not necessarily lax additive $(\oo, 2)$-categories 
 $\DD$ in which each $\Hom(a,b)$
 is stable. In this case we can totalize  Fibonacci cubes of iterated
 adjoints (if they exist) but not necessarily form $a\oplus_f b$. But the concept of $N$-spherical
 $1$-morphisms makes sense in this more general situation.

\vskip .2cm

In particular, one can consider monoidal stable $\oo$-categories $(\M, \otimes)$. 
which can be seen as $(\oo,2)$-categories with one object. In this case the $(\oo,2)$-categorical left and right
adjoints of $1$-morphisms are the same as the left and right duals $A^*, {^* A}$ of objects of $M$, and
one can still form the Fibonacci cubes and their totalizations.  One can then speak about
objects of $\M$ which are $N$-spherical as $1$-morphisms of the corresponding $(\oo, 2)$-category
(they are different from $N$-spherical objects in the sense of \S \ref{subsec:N-spher-obj}).

Even more specifically, one
can start with an abelian, not necessarily symmetric, monoidal category $\Mc$ and let $\M = D^b(\Mc)$
be its derived category. This is the context studied by Coulembier and Etingof \cite{coulembier}
 who found several
interesting examples of objects of $\M$ which are $N$-spherical as $1$-morphisms of the corresponding
$(\oo, 2)$-category.  

\vskip .2cm

Note that any monoidal stable $\oo$-category $(\M, \otimes)$ can be embedded into
$\St$ via the regular representation. That is, we associate to an object $A\in \M$ the functor
$L_A: \M\to \M$ sending $B\mapsto A\otimes B$. Then the right adjoint of $L_A$ is $L_{A^*}$,
where $A^*$ is the right dual. So one can use the techniques of the present paper (such as forming
semi-orthogonal decompositions) in this
more general context. 

\begin{rem}
It would be interesting to make sense of 
  the ``universal $N$-spherical $1$-morphism", 
i.e.,  to construct and study the  lax additive $(\oo, 2)$-category $\SSph_N$
with two objects $a_0, a_1$,  whose higher morphisms are generated by a formal 
$1$-morphism $f: a_0\to a_1$, 
its  adjoints $f^*, \cdots, f^{N-1}$
(and the corresponding adjunction data) such that $\EE_{N-1}(f)$ and $\EE^{N-1}(f)$ vanish
and no further data are included or conditions are imposed. 
Then  $N$-spherical functors would be the same as $(\oo, 2)$-functors $\SSph_N\to\SSt$. 
The $(\oo,2)$-category $\SSt_N$ would then be a categorification of the fission algebra
$\Fc^q(Q_N)$ associated by Boalch \cite{boalch2} \cite[Rem. 17]{boalch} to the graph $Q_N$
from \S \ref{subsec:inter-cat-stokes}. 
\end{rem}


\subsection{Possible relations to the  cluster formalism}\label{subsec:cluster}

Periodicity of categorical orthogonals brings to mind another periodocity phenomenon
whose natural explanation is provided by the theory of cluster algebras: that of Zamolodchikov Y-systems  \cite{fomin-zelevinsky}. 
So it is tempting to look for closer relationships of our approach with  cluster theory. 

In fact, continuants do make frequent appearances in that theory. Thus, they provide cluster
coordinates in some basic examples \cite[Ex. 5.3.10]{FWZ}. In particular, they enjoy various
positivity properties \cite{morier-genoud}. Further, in the correspondence between solutions of
Y-systems and T-systems  \cite{kuniba} the latter play the role of continuants and the former that
of continued fractions, i.e., ratios of continuants. This analogy becomes essentially an identity
for systems associated to the (affine) $sl_2$, when the general solution of the $T$-system is found
in terms of continuants \cite{etingof}. 
 
 The result of \cite{etingof} can be illustrated at a slightly more conceptual level.
 Namely, it is classical that in the Grothendieck ring of representations of $sl_2(\k)$ for a field $\k$
 of characteristic $0$ we have the identity $[V_N] = U_N([V_1])$ where where $[V_N]$ is the class of the representation
 $V_N= S^N(V_1)$, $V_1=\k^2$ and $U_N$ is the Chebyshev polynomial of the second kind, see Proposition 
 \ref{prop:chebyshev}. Passing  to quantum affine algebras, which provide the proper context for T-systems,
 this identity upgrades to one involving a full-fledged continuant \cite[Ex. 3.4]{hernandez}.
 
 This suggests that our categorification of continuants can play a role in some categorification of the cluster formalism.
 There are two levels of such categorification known at present: the additive one \cite {keller, reiten} and
 the monoidal one \cite{hernandez}. Our approach which does not a priori use quantum groups or quivers
 may be hinting at a still further, more abstract level.


\vskip 1cm

\small{
 T.D.:  Universit\"at Hamburg,
Fachbereich Mathematik,
Bundesstrasse 55,
20146 Hamburg, Germany. Email: 
{\tt tobias.dyckerhoff@uni-hamburg.de}

\smallskip

M.K.: Kavli IPMU, 5-1-5 Kashiwanoha, Kashiwa, Chiba, 277-8583 Japan. Email: 
\hfil\break
{\tt mikhail.kapranov@protonmail.com}

\smallskip

 V.S.: Institut de Math\'ematiques de Toulouse, Universit\'e Paul Sabatier, 118 route de Narbonne, 
31062 Toulouse, France and Kavli IPMU, 5-1-5 Kashiwanoha, Kashiwa, Chiba, 277-8583 Japan.
Email: 
 {\tt schechtman@math.ups-tlse.fr }

   }          

\end{document}